\documentclass[11pt,twoside,final]{scrartcl}
\usepackage{url,a4wide}
\usepackage{amsmath, amsfonts, amssymb, amsthm, mathtools, nicefrac}
\usepackage{todonotes}
\usepackage{enumitem}
\usepackage{graphicx}
\graphicspath{{pics/}}
\usepackage[skip=1pt, font=normalsize]{subcaption}
\usepackage{hyperref} % provides \url command for bibtex and links to jump within documents
\hypersetup{plainpages=false, colorlinks, linkcolor=black, citecolor=black, urlcolor=blue,
pdftitle={Approximation by short exponential sums with geometric error decay based on Gauss quadrature},
pdfauthor={Gerlind Plonka, Yannick Riebe, Annie Cuyt}}
\allowdisplaybreaks

\def\sumprime_#1^#2{
    \setbox0=\hbox{$\scriptstyle{#1}$}
    \setbox1=\hbox{$\scriptstyle{#2}$}
    \setbox2=\hbox{$\textstyle{\sum}$}
    \setbox4=\hbox{${}^\prime\mathsurround=0pt$}
    \dimen0=.5\wd0 \advance\dimen0 by-.5\wd2
    \ifdim\dimen0>0pt
        \ifdim\dimen0>\wd4 \kern\wd4
        \else\kern\dimen0
        \ifdim\dimen1>\wd4 \kern\wd4
        \else\kern\dimen1
    \fi\fi\fi
\mathop{{\sum}^\prime}_{\kern-\wd4 #1}^{\kern-\wd4 #2}
}

\newtheorem{theorem}{Theorem}[section]
\newtheorem{corollary}[theorem]{Corollary}
\newtheorem{lemma}[theorem]{Lemma}
\newtheorem{alg}[theorem]{Algorithm}
\newtheorem{definition}[theorem]{Definition}
\newtheorem{example}[theorem]{Example}
\newtheorem{remark}[theorem]{Remark}

\newcommand{\bend}{\hspace*{0ex} \hfill \hbox{\vrule height
	1.5ex\vbox{\hrule width 1.4ex \vskip 1.4ex\hrule  width 1.4ex}\vrule
	height 1.5ex}}

\newenvironment{algorithm}[1]{\goodbreak~\begin{alg}[#1]~\vspace{-9pt}~\\
		\rule{\linewidth}{0.5pt}~\\}{\vspace{-9pt}~\\
		\rule{\linewidth}{0.5pt}~\end{alg}}

\numberwithin{equation}{section}
\numberwithin{table}{section}
\numberwithin{figure}{section}

\renewcommand{\mathbf}[1]{\ensuremath{\boldsymbol{#1}}}

\title{Approximation by short exponential sums with geometric error decay based on Gauss quadrature}
\author{
	Gerlind Plonka\footnotemark[1], 
    Yannick Riebe\footnotemark[2], and Annie Cuyt\footnotemark[3]
}

\date{}
\begin{document}
\maketitle

\begin{abstract}
\textbf{Abstract.} We present new short exponential sum approximations of length 
$N$ for $f_1(x)=\frac{1}{a+x}$ with $a>0$ on $[0, \infty)$ and for $f_2(x)= {\mathrm e}^{-x^2/2\sigma}$ with $\sigma>0$ on ${\mathbb R}$
with geometric error decay ${\rho}^{-2N}$
  for user-defined $N \ge 2$ and $\rho >1$.
The approximations are built over consecutive intervals $[b_j, \, b_{j+1}) \subset [0, \infty)$, $j \in {\mathbb N}_{0}$,
with interval lengths that depend on $\rho$ and grow exponentially for $f_1$
and are equidistant for $f_2$.
All parameters determining the exponential sum approximations on $[b_j, \, b_{j+1})$
are easily computed from the initial parameters on $[b_0, \, b_{1})$, ensuring numerical stability. 
Our method is based on Gauss-Laguerre
and Gauss-Hermite
quadrature, respectively, applied to suitable parametric integral representations of $f_1$ and $f_2$.
This technique ensures consistent relative errors across all intervals. Using the obtained exponential 
sum approximations, we achieve highly accurate approximations of 
$\log(x)$ on $[1,\infty)$ and of the error function $\mathrm{erf}(x)$ with predictable geometric error decay.
Numerical examples for $N=8$ and $N=10$ clearly illustrate the theoretical error estimates.
\medskip
	
\emph{Key words}: short exponential sums, Gauss-Laguerre quadrature, 
	Gauss-Hermite quadrature, approximation of the reciprocal function, the Gaussian, the logarithm and the error function
	\smallskip
	
	AMS \emph{Subject Classifications}:
	41A25, 41A30, 41A55,  65D15, 65D32
\end{abstract}

\footnotetext[1]{Corresponding author: plonka@math.uni-goettingen.de, University of G\"{o}ttingen, Institute for Numerical and Applied Mathematics, D--37083 G\"{o}ttingen, Germany}
\footnotetext[2]{y.riebe@math.uni-goettingen.de, University of G\"{o}ttingen, Institute for Numerical and Applied Mathematics, D--37083 G\"{o}ttingen, Germany}
\footnotetext[3]{annie.cuyt@stir.ac.uk, University of Stirling, Computing Science and Mathematics - Division, DFK9 4LA
Scotland UK}

\section{Introduction}
Exponential sum approximations of smooth functions are of great interest in various applications since they can easily be integrated, differentiated, convolved, and applied in operator form. Consequently, these approximations play 
an important role in the representation of kernels and Green's functions.

In this paper, we consider the approximation of two special analytic functions by short exponential sums.
The first function $f_1(x) := \frac{1}{a+x}$ with $a \in {\mathbb R}$, $a >0$, is considered on $[0, \infty)$.  Exponential sum approximations of $f_1(x)$ are of particular interest for modelling fractional diffusion operators with $1/x$-type kernels and for the numerical evaluation of high-dimensional integrals in quantum chemistry, see \cite{Beylkin05, Braess05, Beylkin10, Hackbusch19, Braess26, Bilokon26}.

The second function is the Gaussian $f_2(x) := {\mathrm e}^{-x^2/2\sigma}$ with $\sigma >0$ on ${\mathbb R}$, which plays an important role in statistics, signal processing, molecular modeling, computational chemistry, as well as in approximation theory, see e.g. \cite{Bachmayr14, Jiang22, Derevianko26}.
\medskip

\textbf{Problem statement and main results.}
Our goal is to find exponential sum approximations of length $N$ for $f(x)= f_1(x)$ and $f(x)=f_2(x)$ with a
 geometric error decay rate $\rho^{-2N}$, where we can choose $N\ge 2$ and $\rho>1$ arbitrarily in advance, and where all required 
parameters determining the exponential sum  
can be easily computed with high accuracy.  In particular, we are interested in a rather short exponential sum of length $N$, 
while the approximation error can be made arbitrarily small by choosing a large $\rho$.
 
Obviously, this goal cannot be achieved for all $x \in [0, \infty)$ simultaneously. Instead, we propose to adapt the exponential sum approximation over consecutive intervals $[b_j, \, b_{j+1})$ with $b_0=0$ and $b_{j+1} > b_j$, $j \in {\mathbb N}_0$, 
where the length of the intervals $b_{j+1}-b_j$ depends on the desired decay parameter $\rho$. The achieved approximations satisfy 
$$  \left|f(x) - \sum\limits_{k=1}^N w_k^{(j)} \, {\mathrm e}^{t_k^{(j)} x} \right| \le  c^{(j)} \rho^{-2N},  \qquad x \in [b_j, b_{j+1}),$$
where the sequence $(c^{(j)})_{j=0}^{\infty}$ decays exponentially with $c^{(j+1)}< c^{(j)} < c^{(0)}=\frac{2}{a} \frac{\rho}{\rho-1}$ 
for $f=f_1$ and $c^{(j+1)}< c^{(j)} < c^{(0)} =\sqrt{\frac{2N+1}{4N}}$
for $f=f_2$. Further, 
 we require that the parameters $w_k^{(j)}$, $t_k^{(j)}$
for  $j >0$ can be easily computed from 
$w_k^{(0)}$, $t_k^{(0)}$.  
 We will be able to take an
 exponentially growing interval length $b_{j+1}-b_j=\frac{2a}{\rho-1}\big(\frac{\rho+1}{\rho-1}\big)^j$ for $f_1$
 and an equidistant interval length  $b_{j+1}-b_j=\frac{2}{\rho} \sqrt{\frac{N \sigma}{{\mathrm e}}}$ for $f_2$.

Our construction of exponential sum approximations is strongly based on the observation that the functions $f_1$ and $f_2$ can be represented via a parametric integral representation that allows the direct application of Gauss quadrature formulas.
For $f_1$ we will use the Gauss-Laguerre quadrature, while for the Gaussian $f_2$ we will employ the Gauss-Hermite quadrature. For both functions, we furthermore develop a strategy to employ structurally similar exponential sum approximations on consecutive intervals, such that the relative error of our exponential sum 
approximation remains the same for every considered interval.
While the obtained approximations are not optimal in special norms, they satisfy the desired error 
bound 
and can be easily computed without 
any numerical stability issues. 

We also show that the obtained approximations can be directly applied to achieve a highly accurate exponential sum approximation of the function $\log(x)$ on $[1, \infty)$ and an approximation of the error function by a short sum of Gaussians, both with predetermined geometric error decay.

\medskip

\textbf{Related Literature.}
There have been several different approaches to compute exponential sum approximations for $f_1(x)$, see e.g.\ \cite{Braess86, Braess95, Braess05, Beylkin05, Beylkin10, Potts13, McLean18, Hackbusch19, Braess26}, as well as $f_2(x)$, see 
\cite{Schaback79, Jiang22, Derevianko26}.
These techniques are often based on numerical realizations of Prony's method, where the nonlinear problem of minimizing
$\| f(x) - \sum\limits_{k=1}^N w_k {\mathrm e}^{T_k x}\|$ with respect to $w_k$ and $T_k$ is replaced by a discrete (interpolation) problem. Several modifications of the classical Prony method have been proposed 
 to achieve higher numerical stability, see \cite{Beylkin05,Beylkin10,Potts13,Plonka14,Derevianko23,Zhang19,Briani20,Cuyt20}
and \cite{Plonka23}, chapter 10.  Also Bessel functions can be efficiently approximated  
by these methods, see \cite{Beylkin05,Cuyt20,Derevianko23,Razavi23}.

In some papers the close connection to rational approximation is exploited \cite{Jiang22, Derevianko23, Razavi23, Braess26, Kuznetsov26}, since the Laplace transform of an exponential sum of length $N$ yields a rational function of type $(N-1,N)$.
The approach in \cite{Hackbusch19} generalizes the Remez algorithm to achieve best approximations of $\frac{1}{x}$ in the $L^{\infty}$-norm.
\smallskip

Suboptimal exponential approximation methods in \cite{Schaback79,Kammler81,Stampfer20,Derevianko26} exploit the fact that an exponential sum can be interpreted as the solution of a homogeneous linear differential equation of order $N$ with constant coefficients.
Assuming that $f$ can be well approximated by an exponential sum of length $N$, one can try to find a linear differential operator $D_{N}$ of order $N$ that minimizes $\| D_N f\|$ in some norm and yields suitable frequency parameters for the exponential sum approximation.

The use of quadrature formulas to derive suboptimal exponential sum approximations
from integral representations of $f$
has been proposed e.g.\ in \cite{Beylkin05,Beylkin10,McLean18,Jiang22,Braess26,Bilokon26}.  In particular, 
the trapezoidal rule has been applied in \cite{Beylkin05,Beylkin10} and in \cite{Braess26}. The Gauss-Hermite quadrature rule appears in \cite{Derevianko26}, where it is however not directly used to compute an exponential sum approximation of the Gaussian. Very recently, in \cite{Bilokon26} the application of 
Gauss-Laguerre quadrature for kernel approximation has been proposed, but without further analysis of approximation errors.

\smallskip

The mentioned numerical methods based on discretized interpolation or approximation mostly come without theoretical error estimates. For the function $f_1(x)$, which is a special instance of a completely monotonic function, there exist investigations on the error of optimal approximation by exponential sums of fixed order $N$ on compact
 intervals $[a,b] \subset [0, \infty)$ and  on $[0, \infty)$ in the $L^{\infty}$-norm and in weighted norms, see \cite{Braess86,Braess95,Braess05,Braess26}.
In \cite{Tasche26} it has been shown that $\frac{1}{x}$ can be approximated on an interval $[a,b]$ 
(with $0 < a < b < \infty$) by algebraic polynomials of degree $N$ leading to geometric  error decay of the form $\rho^{N+1}$, where $\rho= v- \sqrt{v^2-1}$ with $v=\frac{a+b}{b-a}$.
Regarding error estimates for  $f_2(x)$, we are only aware of the results in \cite{Derevianko26}, where 
the Gaussian is approximated by a short cosine sum, and error estimates in the $L^2$-norm or the weighted $L^2$-norm employ the Gauss-Hermite quadrature.

Our approach is significantly different from the known exponential sum approximations considered so far, since we can fix the desired error bound as well as the length of the sum in advance, and can determine all required parameters of the exponential sum based on the weights and nodes of the corresponding Gauss-type quadrature formulas.
The nodes and weights for the required Gauss quadrature formulas can be computed with high accuracy, see \cite{Townsend16, Gil19}. Moreover, we will show that exponential sums of order up to $10$ are sufficient to achieve errors in the range $10^{-15}$ in double-precision arithmetic.
\medskip

\textbf{Outline of the paper.}
In Section 2, we consider the approximation of $f_1(x)= \frac{1}{x+a}$ by short exponential sums, based on the 
application of Gauss-Laguerre quadrature to a parametric integral representing $f_1$. To achieve the desired approximation result, we first derive a new error estimate for Gauss-Laguerre quadrature applied to the 
special function $g(s)={\mathrm e}^{-s(\frac{x+a}{ab}-1)}$, where $x$, $a$ and $b$ are taken as positive parameters, see Theorem \ref{theoer}. Based on this error estimate we derive our first main Theorem \ref{theo2b}, which contains the exponential sum approximations for $f_1(x)$ and the corresponding error estimate.
In Section 2.2 we also compute the relative error of this approximation, which has the same structure and magnitude in every considered interval. Further, we give numerical examples for the highly accurate approximation of $f_1(x)$ with exponential sums of
 lengths $8$ and $10$. Section 2.3 provides an application of the obtained approximation, namely the derivation of geometrically decaying exponential sum approximations of $\log(x)$. 
 \smallskip
 
In Section 3, we study the approximation of $f_2(x)={\mathrm e}^{-x^2/2\sigma}$ by short exponential sums, based on the Gauss-Hermite quadrature rule applied to a suitable parametric integral representation of $f_2$. The second main Theorem \ref{Happrox} presents the new exponential sum approximation of $f_2$ together with the error estimate providing a geometric error decay. In Section 3.2 we consider the computational effort of the exponential sum approximation, estimate the relative error (which has the same structure and magnitude in every considered interval), and give numerical examples for $N=8$ and $N=10$. Section 3.3 then presents an application to approximate the error function $\mathrm{erf}(x)$ with high accuracy. 

All numerical computations use mostly double-precision arithmetic, and our numerical results show that the achieved errors can readily reach the range of $10^{-15}$.

\section{Approximation of $\frac{1}{a+x}$ on $[0, \infty)$ by short exponential sums}

Observe that the function $f_1(x) = \frac{1}{a+x}$ with $x \ge 0$ and $a>0$ can be represented as a parameter integral of the form 
\begin{align}\label{rep11}
\textstyle \frac{1}{x+a} = \frac{1}{a b} \int\limits_0^{\infty}  {\mathrm e}^{-s\big(\frac{x+a}{ab}-1\big)} \, {\mathrm e}^{-s} \, {\mathrm d}s, 
\end{align}
where $b>0$ is an arbitrary parameter.

\subsection{Approximation based on Gauss-Laguerre Quadrature}
Our goal is to find an approximation of $f_1(x)$ by a short exponential sum with geometric error decay, 
using the Gauss-Laguerre quadrature. For this purpose, we first briefly recall the known results on this quadrature rule and derive a new error estimate for the function $\tilde{g}_{b,x} (s) := \frac{1}{ab} \, {\mathrm e}^{-s(\frac{x+a}{ab} - 1)}$. Based on this result we will derive our exponential sum approximation of $f_{1}$ in Theorem \ref{theo2b}.

\subsubsection{Error of Gauss-Laguerre quadrature}
Laguerre polynomials  $L_{n}(s)$, $n \in {\mathbb N}_{0}$ are determined by their generating function of the form 
\begin{align}\label{gene}
\textstyle \frac{1}{1-z} \, {\mathrm e}^{-\frac{sz}{1-z}} = \sum\limits_{n=0}^{\infty} z^n \, L_n(s), 
\end{align}
see \cite{Szego}, formula (5.1.9). We further recall that the sequence $\{ L_n(t)\}_{n=0}^{\infty}$ of Laguerre polynomials forms an orthogonal system 
on the weighted Hilbert space $L_2([0, \infty), {\mathrm e}^{-t})$ with 
$$ \textstyle \int\limits_0^{\infty} L_n(t) \, L_m(t) \, {\mathrm e}^{-t} \, {\mathrm d}t = \delta_{n,m}, $$
where $\delta_{n,m}$ denotes the Kronecker symbol.

Generally, 
the Gauss-Laguerre quadrature rule is given by 
\begin{align}\label{GL} \textstyle \int\limits_{0}^{\infty} g(s) \, {\mathrm e}^{-s} \, {\mathrm d}s = 
\sum\limits_{k=1}^N \, w_{N,k}^{(L)} \, g(t_{N,k}^{(L)}) + R_N(g), 
\end{align}
where $t_{N,k}^{(L)}$, $k=1, \ldots, N$, denote the positive pairwise distinct  roots of the Laguerre polynomial $L_N(t)$,
and the weights $w_{N,k}^{(L)}$ are given by 
$$  \textstyle w_{N,k}^{(L)} := \frac{t_{N,k}^{(L)}}{(N+1)^2 \, [L_{N+1}(t_{N,k}^{(L)})]^2}, \qquad k=1, \ldots , N. $$
Based on Peano kernel analysis,  $R_N(g)$ can for smooth functions $g \in C^{2N}([0, \infty))$
be presented in the form 
\begin{align}\label{GLerror} \textstyle R_N(g) = \frac{(N!)^2}{(2N)!} g^{(2N)}(\xi), 
\end{align}
where $g^{(2N)}(\xi)$ denotes the $(2N)$-th derivative of $g$ at  a suitable value $\xi \in (0, \infty)$, see 
\cite{Abramowitz}, Section 25.4.45.
 Application of  the quadrature rule (\ref{GL}) to our representation of $f_1(x)$ in (\ref{rep11}) leads with 
 $\tilde{g}_{b,x}(s):=  \frac{1}{ab}{\mathrm e}^{-s \big(\frac{x+a}{ab}-1\big)}$ to 
\begin{align*}
\textstyle \frac{1}{a+x} 
&  \textstyle =  \frac{1}{a b} \int\limits_0^{\infty}  {\mathrm e}^{-s\big(\frac{x+a}{ab}-1\big)} \, {\mathrm e}^{-s} \, {\mathrm d}s  
 = \textstyle  \sum\limits_{k=1}^N \frac{w_{N,k}^{(L)}}{ab} \,  
 {\mathrm e}^{-t_{N,k}^{(L)} \big(\frac{x+a}{ab}-1\big)}  + R_N(\tilde{g}_{b,x})
\end{align*}
with $R_N(\tilde{g}_{b,x}) = \frac{(N!)^2}{(2N)!} \tilde{g}_{b,x}^{(2N)}(\xi_x)$ for some suitable $\xi_x \ge 0$, i.e., 
\begin{align} \label{err0}
|R_N(\tilde{g}_{b,x})| &  \textstyle  
\le \frac{(N!)^2}{(2N)!}  \frac{1}{ab} \big(\frac{x+a}{ab}-1\big)^{2N} < \frac{\sqrt{\pi (N+\frac{1}{2})}}{4^N \, ab} 
\big(\frac{x+a}{ab}-1\big)^{2N},
\end{align}
where we have used Stirling's formula and  $\|\tilde{g}_{b,x}^{(2N)}\|_{\infty} =\frac{1}{ab} \big(\frac{x+a}{ab}-1\big)^{2N}$ for $\big(\frac{x+a}{ab}-1\big) \ge 0$, i.e.,
$x \ge a(b-1)$.
Based on this error estimate, $|R_N(\tilde{g}_{b,x})|$ decays geometrically with $N$ if 
$ \frac{1}{2} (\frac{x+a}{ab}-1\big) <1$, i.e., if $1 \le \frac{x+a}{ab}< 3$, which can be easily achieved by choosing $b$ suitably. Moreover, an error decay of at least $\rho^{-2N}$ is obtained for 
$0 \le \frac{x+a}{ab} -1 \le \frac{2}{\rho}$,
i.e., for 
$$  \textstyle  a(b-1)\le x \le  ab \Big(\frac{2}{\rho} +1\Big) -a = \frac{2ab}{\rho} + a(b-1). $$
Thus, we can fix the order $N$ of the exponential sum and the decay rate $\rho>1$ to approximate $\frac{1}{x+a}$ with a geometrically decaying error $\rho^{-2N}$ on the interval $[a(b-1), \frac{2ab}{\rho} + a(b-1)]$. For example, taking $b=1$ we obtain this decay rate for $x \in [0, \frac{2a}{\rho}]$, while for larger $b$ we can accurately approximate $f_1(x)$ on other intervals. 

However, the error estimate (\ref{err0}) for $|R_N(\tilde{g}_{b,x})|$ is not very sharp and strongly overestimates the error when applied to the entire function $\tilde{g}_{b,x}$.

Therefore, we first derive a new error estimate tailored to our specific purpose.
Afterwards, we show how to approximate $
f_1(x) = \frac{1}{a+x}$ for arbitrary $x$ with high accuracy.

\begin{theorem}\label{theoer}
The Gauss-Laguerre quadrature rule possesses  for $\tilde{g}_{b,x}(s)= \frac{1}{ab}{\mathrm e}^{-s \big(\frac{x+a}{ab}-1\big)}$
an error of the form 
\begin{align}
\nonumber
\textstyle  R_N(\tilde{g}_{b,x}) &= \textstyle   \int\limits_0^{\infty} \tilde{g}_{b,x}(s) \, {\mathrm e}^{-s} \, {\mathrm d} s
- \sum\limits_{k=1}^N w_{N,k}^{(L)} \,   \tilde{g}_{b,x}(t_{N,k}^{(L)})  =  \frac{1}{x+a}
- \sum\limits_{k=1}^N w_{N,k}^{(L)} \,  \tilde{g}_{b,x}(t_{N,k}^{(L)})  \\
\label{errorexact}
& \textstyle =  \textstyle  \frac{-1}{x+a} \big(1-\frac{ab}{x+a}\Big)^{2N} \!\!
 \sum\limits_{n=0}^{\infty} \big( 1-\frac{ab}{x+a}\Big)^n 
\sum\limits_{k=1}^N w_{N,k}^{(L)} \,  L_{n+2N} (t_{N,k}^{(L)}) .
\end{align}
In particular, for $x > a\big(\frac{b}{2}-1\big)$ we have
\begin{align}
\label{err1}
&\textstyle  |R_N(\tilde{g}_{b,x})| \le c \,  \big( 1-\frac{ab}{x+a}\Big)^{2N},
\end{align}
where $c=\frac{2}{ab}$ for $x \ge a(b-1)$ and $c=\frac{2}{2(x+a)-ab}$ for $a\big(\frac{b}{2}-1\big) < x < a(b-1)$.
\end{theorem}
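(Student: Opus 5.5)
Set $z := 1-\frac{ab}{x+a}$. Then $\frac{z}{1-z} = \frac{x+a}{ab}-1$ and $\frac{1}{1-z}=\frac{x+a}{ab}$. The generating function (\ref{gene}) therefore gives
$$\textstyle \tilde g_{b,x}(s)=\frac{1}{ab}\,{\mathrm e}^{-s\frac{z}{1-z}} = \frac{1-z}{ab}\sum_{n\ge 0} z^n L_n(s) = \frac{1}{x+a}\sum_{n\ge0} z^n L_n(s).$$
This expansion converges for $|z|<1$, i.e.\ for $x+a>ab/2$, which is exactly the hypothesis $x>a(\frac b2-1)$. The plan is to apply the quadrature functional termwise to this Laguerre expansion.

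\textbf{Step 1 (exact error formula).} Since $R_N$ is linear and vanishes on polynomials of degree $\le 2N-1$, all terms with $n\le 2N-1$ drop out. For $n\ge 2N$ the integral $\int_0^\infty L_n(s){\mathrm e}^{-s}\,{\mathrm d}s=0$ by orthogonality to $L_0$, so $R_N(L_n)=-\sum_k w_{N,k}^{(L)}L_n(t_{N,k}^{(L)})$. Shifting the index $n\mapsto n+2N$ yields (\ref{errorexact}).

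To justify interchanging $R_N$ with the series, I would note the following. The series converges in $L_2([0,\infty),{\mathrm e}^{-t})$, since $\sum |z|^{2n}<\infty$, so the integral part may be taken termwise. At the finitely many nodes the series converges pointwise for $|z|<1$, because it is the generating function. The quadrature part is a finite sum, so it commutes with the pointwise limit.

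\textbf{Step 2 (bound).} The key tool is the classical inequality $|L_n(t)|\le {\mathrm e}^{t/2}$ for $t\ge 0$ (Szeg\H{o}). With it,
$$\Big|\sum_k w_{N,k}^{(L)}L_{n+2N}(t_{N,k}^{(L)})\Big|\le \sum_k w_{N,k}^{(L)}{\mathrm e}^{t_{N,k}^{(L)}/2}=:S_N,$$
and the positivity of the Gauss weights is used here. Summing the geometric series then gives
$$|R_N|\le \frac{|z|^{2N}}{x+a}\cdot\frac{S_N}{1-|z|}.$$

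It remains to bound $S_N$. This is the Gauss--Laguerre rule applied to ${\mathrm e}^{s/2}$. Every even derivative of ${\mathrm e}^{s/2}$ is positive, so by (\ref{GLerror}) the remainder is positive and the rule underestimates the integral: $S_N\le \int_0^\infty {\mathrm e}^{s/2}{\mathrm e}^{-s}\,{\mathrm d}s=2$. (Strictly, (\ref{GLerror}) needs a growth justification on $[0,\infty)$; alternatively one can use the known monotone convergence of Gauss rules for functions with positive even derivatives.) The bound becomes $|R_N|\le \frac{2|z|^{2N}}{(x+a)(1-|z|)}$.

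\textbf{Step 3 (the two cases).}
\begin{itemize}
\item If $x\ge a(b-1)$, then $z\ge 0$ and $(x+a)(1-z)=ab$, so $c=2/(ab)$.
\item If $a(\frac b2-1)<x<a(b-1)$, then $z<0$ and $(x+a)(1+z)=2(x+a)-ab$, so $c=\frac{2}{2(x+a)-ab}$.
\end{itemize}

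The main obstacle I expect is the uniform bound on $\sum_k w_{N,k}^{(L)}|L_m(t_{N,k}^{(L)})|$ independent of $m$. The plan commits to the Szeg\H{o} inequality $|L_n(t)|\le{\mathrm e}^{t/2}$ together with the one-sided quadrature error for ${\mathrm e}^{s/2}$, and this is the step to verify carefully, including the interchange of limits.
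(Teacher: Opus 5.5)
Your proposal is correct and follows the paper's proof essentially step by step: the Laguerre expansion from the generating function with $z=1-\frac{ab}{x+a}$, exactness of the rule up to degree $2N-1$, the bound $|L_n(t)|\le {\mathrm e}^{t/2}$, the estimate $\sum_{k=1}^N w_{N,k}^{(L)}{\mathrm e}^{t_{N,k}^{(L)}/2}<2$ from the sign of the remainder for ${\mathrm e}^{s/2}$, and the same two-case evaluation of $c$. Your extra care about applying $R_N$ termwise and about the remainder formula on $[0,\infty)$ only makes explicit what the paper takes for granted; the latter concern is settled directly by the nonnegativity of the Hermite interpolation error $\frac{g^{(2N)}(\xi(s))}{(2N)!}\prod_k (s-t_{N,k}^{(L)})^2$ for $g={\mathrm e}^{s/2}$.
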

\begin{proof}
Observe that the Laguerre expansion of the function $\tilde{g}_{b,x}(s)=  \frac{1}{ab}
{\mathrm e}^{-s \big(\frac{x+a}{ab}-1\big)}$ is given by
$$ \textstyle \tilde{g}_{b,x}(s)=   \frac{1}{x+a} \sum\limits_{n=0}^{\infty} \big( 1-\frac{ab}{x+a}\Big)^n\, L_n(s). $$
This follows from (\ref{gene}) with $z= 1- \frac{ab}{x+a}$.
Since the Gauss-Laguerre quadrature is exact for polynomials up to degree $2N-1$, we conclude
\begin{align*}
R_N(\tilde{g}_{b,x}) &= \textstyle  \int\limits_0^{\infty} \tilde{g}_{b,x}(s) \, {\mathrm e}^{-s} \, {\mathrm d} s
- \sum\limits_{k=1}^N w_{N,k}^{(L)} \, \tilde{g}_{b,x}(t_{N,k}^{(L)}) \\
&= \textstyle  \frac{1}{x+a} \sum\limits_{n=0}^{\infty} \big( 1-\frac{ab}{x+a}\big)^n\,\Big( \int\limits_0^{\infty}   L_n(s)\, {\mathrm e}^{-s} \, {\mathrm d} s  - \sum\limits_{k=1}^N w_{N,k}^{(L)} \, L_n(t_{N,k}^{(L)})\Big)\\
&= \textstyle  \frac{1}{x+a} \sum\limits_{n=2N}^{\infty} \big( 1-\frac{ab}{x+a}\big)^n\, 
\big( 0 - \sum\limits_{k=1}^N w_{N,k}^{(L)} \, L_n(t_{N,k}^{(L)})\big)\\
&= \textstyle \textstyle  \frac{-1}{x+a} \big( 1-\frac{ab}{x+a}\big)^{2N}
 \sum\limits_{n=0}^{\infty} \big( 1-\frac{ab}{x+a}\big)^n\, 
\sum\limits_{k=1}^N w_{N,k}^{(L)} \, L_{n+2N} (t_{N,k}^{(L)}).
\end{align*}
Applying that $ {\mathrm e}^{-t/2} |L_{n}(t)| \le 1$ for all $t \ge 0$  and all $n \in {\mathbb N}$, see e.g.\ 
\cite{Szego}, formula (7.21.3), we obtain for all $x$ satisfying $\big|1-\frac{ab}{x+a}\big| < 1$, i.e., for all $x> a\big(\frac{b}{2} - 1 \big)$, the estimate
\begin{align*}
\textstyle \big|R_N(\tilde{g}_{b,x}) \big|
&\le \textstyle \frac{1}{x+a} \big( 1-\frac{ab}{x+a}\big)^{2N}
 \sum\limits_{n=0}^{\infty} \big| 1-\frac{ab}{x+a}\big|^n\, 
\sum\limits_{k=1}^N w_{N,k}^{(L)} \, {\mathrm e}^{t_{N,k}^{(L)}/2} \\
&\le\textstyle \frac{2}{x+a} \big( 1-\frac{ab}{x+a}\big)^{2N}
 \sum\limits_{n=0}^{\infty} \big| 1-\frac{ab}{x+a}\big|^n\\
 &= \textstyle \frac{2}{x+a} \big( 1-\frac{ab}{x+a}\big)^{2N}  \frac{1}{1-\big| 1-\frac{ab}{x+a}\big|}
 =  c\big( 1-\frac{ab}{x+a}\Big)^{2N},
\end{align*}
where we 
used that $\sum\limits_{k=1}^N w_{N,k}^{(L)} \, {\mathrm e}^{t_{N,k}^{(L)}/2} \le2$. This estimate follows with (\ref{GL}) and (\ref{GLerror})
from 
\begin{align*} \textstyle  0 \le \sum\limits_{k=1}^N w_{N,k}^{(L)} \, {\mathrm e}^{t_{N,k}^{(L)}/2} = \int\limits_0^{\infty}
{\mathrm e}^{s/2}\, {\mathrm e}^{-s} \, {\mathrm d}s -\frac{(N!)^2}{(2N)!} \, ({\mathrm e}^{\cdot/2})^{(2N)}(\xi) < \int\limits_0^{\infty}
{\mathrm e}^{s/2}\, {\mathrm e}^{-s} \, {\mathrm d}s
= 2,
\end{align*}
see also \cite{Xiang12}, page 438. In the last step, we find either 
$c=\frac{2}{x+a}\frac{x+a}{ab}= \frac{2}{ab}$ if $\frac{ab}{x+a} \le1$ or $c=\frac{2}{2(x+a)-ab}$ for $\frac{ab}{x+a} > 1$.
\end{proof}

\begin{remark}\label{rem1}
The error estimate $(\ref{err1})$ can be further improved for fixed $N$ by precomputing the exact 
sums $\sum\limits_{k=1}^N w_{N,k}^{(L)}  L_{n+2N} (t_{N,k}^{(L)})$ for $n=0, \ldots, K-1$ and inserting the corresponding terms.
For every fixed $K \in {\mathbb N}_0$ the  error representation $(\ref{errorexact})$ yields
\begin{align}\label{imp}
   |R_N(\tilde{g}_{b,x})| 
  &  < \textstyle c \big|1-\frac{ab}{x+a}\big|^{2N+K} 
  + \frac{1}{x+a} \sum\limits_{n=0}^{K-1} \big|1-\frac{ab}{x+a}\big|^{2N+n}
  \Big|  \sum\limits_{k=1}^N w_{N,k}^{(L)} \, L_{n+2N}(t_{N,k}^{(L)})\Big|
  \end{align}
  with $c$ as in Theorem $\ref{theoer}$. Indeed, $\sum\limits_{k=1}^N w_{N,k}^{(L)}  L_{n+2N} (t_{N,k}^{(L)})$ is significantly smaller than $2$ for small $n$.
We use this error estimate for $K=10$ in our numerical experiments for $N=8$ and $N=10$, and it is very tight.
\end{remark}

\begin{remark}
There have been several attempts to improve error estimates for Gaussian quadrature formulas for 
entire functions, which usually relate to the finite interval $[-1,1]$. For the Gauss-Laguerre case we refer to \cite{Lub83, Xiang12}. The estimates in \cite{Lub83, Xiang12} are however asymptotic in nature 
(i.e., hold for $N \to \infty$) and cannot be used for our purpose.
\end{remark}

\subsubsection{Exponential sum approximation for $\frac{1}{x+a}$}

The estimate (\ref{err1}) shows that for $b <2$, a geometrically decreasing error of the Gauss-Laguerre quadrature is achieved for all $x \ge 0$; however, $\lim\limits_{x \to \infty} \big( 1 - \frac{ab}{x+a}\big) =1$. 
To achieve the desired geometric error decay for all $x$, we propose to use modified approximations on consecutive intervals.

\begin{theorem}\label{theo2b}
For a given $\rho>1$  and given $a>0$ let $a_j := \frac{\rho+1}{\rho} \big( \frac{\rho+1}{\rho-1}\big)^j$ 
 and ${b}_j:= a \big(\big(\frac{\rho+1}{\rho-1}\big)^j-1\big)$ for $j \in {\mathbb N}_0$ as well as $a_{-1}:= \frac{\rho-1}{\rho}$.
Then  the function $f_1(x)= \frac{1}{a+x}$ can be approximated by an exponential 
sum of arbitrary length $N \in {\mathbb N}$ with 
\begin{align}\label{approx2}
 \textstyle \Big| \frac{1}{x+a} - \sum\limits_{k=1}^N \, \Big(\frac{w_{N,k}^{(L)}}{a a_j} \,  
 {\mathrm e}^{\frac{t_{N,k}^{(L)}(a_j-1)}{a_j}} \Big)\, 
  {\mathrm e}^{-\frac{t_{N,k}^{(L)} x}{a a_j}} \Big|  \le  \frac{2}{aa_{j-1}} \big( 1-\frac{a a_j}{x+a}\big)^{2N}
  &\le 
\textstyle  \frac{2}{aa_{j-1}} \, \rho^{-2N} \end{align}
for  $ x \in [{b}_j, \, {b}_{j+1}]$ and $j \ge 0$.
\end{theorem}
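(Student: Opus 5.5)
The plan is to apply Theorem \ref{theoer} with the free parameter $b$ chosen as $b=a_j$ on the $j$-th interval. With this choice the quadrature sum is
$$\sum_{k=1}^N w_{N,k}^{(L)}\,\tilde g_{a_j,x}(t_{N,k}^{(L)})=\sum_{k=1}^N \frac{w_{N,k}^{(L)}}{a a_j}\,{\mathrm e}^{-t_{N,k}^{(L)}\big(\frac{x+a}{aa_j}-1\big)}.$$
Since $\frac{x+a}{aa_j}-1 = \frac{x}{aa_j}-\frac{a_j-1}{a_j}$, the exponent splits into the $x$-independent factor ${\mathrm e}^{t_{N,k}^{(L)}(a_j-1)/a_j}$ times ${\mathrm e}^{-t_{N,k}^{(L)}x/(aa_j)}$. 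This is exactly the exponential sum in (\ref{approx2}), and its error is $R_N(\tilde g_{a_j,x})$.

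Next I check the admissibility conditions of (\ref{err1}) on $[b_j,b_{j+1}]$. Write $q:=\frac{\rho+1}{\rho-1}$. Then $x+a\in[aq^j,aq^{j+1}]$ and $aa_j=a\frac{\rho+1}{\rho}q^j$, so
$$\frac{aa_j}{x+a}\in\Big[\frac{\rho-1}{\rho},\,\frac{\rho+1}{\rho}\Big],$$
because $\frac{\rho+1}{\rho}q^{-1}=\frac{\rho-1}{\rho}$. Consequently $\big|1-\frac{aa_j}{x+a}\big|\le \frac1\rho<1$, which gives the final inequality $\le \frac{2}{aa_{j-1}}\rho^{-2N}$. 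The same computation shows $x+a>aa_j/2$, since $\frac{\rho+1}{\rho}<2$, so Theorem \ref{theoer} applies on the whole interval. (The exponent $2N$ is even, so the sign of $1-\frac{aa_j}{x+a}$ is irrelevant.)

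The main step is the constant: I must show the $c$ from Theorem \ref{theoer} is at most $\frac{2}{aa_{j-1}}$ uniformly on the interval.
\begin{itemize}
\item For $x\ge a(a_j-1)$ we have $c=\frac{2}{aa_j}\le \frac{2}{aa_{j-1}}$, since $a_j$ is increasing in $j$. I will verify $a_{-1}=\frac{\rho-1}{\rho}<a_0=\frac{\rho+1}{\rho}$ and $a_j=qa_{j-1}$ for $j\ge1$.
\item For $x<a(a_j-1)$ we have $c=\frac{2}{2(x+a)-aa_j}$, and this is decreasing in $x$. Its worst case is the left endpoint $x=b_j$, where $x+a=aq^j$, giving
$$c\le \frac{2}{a\big(2q^j-\frac{\rho+1}{\rho}q^j\big)}=\frac{2}{a\frac{\rho-1}{\rho}q^j}.$$
I then check that $\frac{\rho-1}{\rho}q^j=a_{j-1}$. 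For $j\ge1$ this holds because $a_{j-1}=\frac{\rho+1}{\rho}q^{j-1}=\frac{\rho-1}{\rho}q^j$, and for $j=0$ it is the definition of $a_{-1}$.
\end{itemize}
This identity explains the definition of $a_{-1}$ and is where care is needed. Combining both cases gives the first inequality in (\ref{approx2}), and the bound $|1-\frac{aa_j}{x+a}|\le 1/\rho$ gives the second.
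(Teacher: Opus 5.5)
Your proposal is correct and follows the paper's proof. Both apply Theorem~\ref{theoer} with $b=a_j$, use $|1-\frac{aa_j}{x+a}|\le \frac{1}{\rho}$ on $[b_j,b_{j+1}]$, and bound the constant by its value at the left endpoint $x=b_j$. The only difference is cosmetic: the paper bounds the single expression $c=\frac{2}{x+a}\big(1-|1-\frac{aa_j}{x+a}|\big)^{-1}\le \frac{2}{x+a}\frac{\rho}{\rho-1}$ using $x+a\ge a\big(\frac{\rho+1}{\rho-1}\big)^j$, whereas you split into the two cases for $c$ from Theorem~\ref{theoer}; both give exactly $\frac{2}{aa_{j-1}}$, and you also make explicit the rewriting of the exponent and the applicability condition $x>a\big(\frac{a_j}{2}-1\big)$, which the paper leaves implicit.
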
 

\begin{proof} 
Let $x \in [{b}_j, \, {b}_{j+1}]
= \big[ a(\frac{\rho}{\rho+1} a_j -1), \,a(\frac{\rho}{\rho+1} a_{j+1} -1) \big]$
 for $j \in {\mathbb N}_0$.
Then 
\begin{align*} \textstyle - \frac{1}{\rho}  \le  1- \frac{aa_j}{a+x} \le \frac{1}{\rho}, 
\end{align*}
 and Theorem \ref{theoer} implies
 $$ \textstyle |R_N(\tilde{g}_{a_j,x})| \le c  \big( 1-\frac{aa_j}{x+a} \big)^{2N} \le c \, \rho^{-2N}$$
 with $c = \frac{2}{x+a}   \frac{1}{1-\big| 1-\frac{aa_j}{x+a}\big|} \le \frac{2}{x+a}  \frac{1}{1-\frac{1}{\rho}} \le 
 \frac{2\rho(\rho-1)^{j-1}}{a(1+\rho)^{j}} = \frac{2}{aa_{j-1}}$, where for $x \in  [{b}_0, \, {b}_{1}]$ we set  $a_{-1} := \frac{\rho-1}{\rho}$.
Hence, the assertion follows.
\end{proof}

\begin{remark}
Clearly, also the error estimate $(\ref{err0})$ can serve to find an approximation for $\frac{1}{x+a}$ at 
consecutive intervals, which can be similarly derived as
Theorem $\ref{theo2b}$. In this case, the following can be shown:

For a given $\rho>1$ and given $a>0$, the function $f_1(x)= \frac{1}{a+x}$ can be approximated by an exponential 
sum of arbitrary length $N \in {\mathbb N}$ with  
\begin{align}\label{approx1}
 \textstyle \Big| \frac{1}{x+a} - \sum\limits_{k=1}^N \, \Big(\frac{w_{N,k}^{(L)}}{a c_j} \,  
 {\mathrm e}^{\frac{t_{N,k}^{(L)}(c_j-1)}{c_j}} \Big)\, 
  {\mathrm e}^{-\frac{t_{N,k}^{(L)} x}{a c_j}} \Big| &\le 
\textstyle  \frac{\sqrt{\pi (N+\frac{1}{2})}}{a c_j}  \, \rho^{-2N} 
 \end{align}
for $ x \in 
\big[ a \big(c_j-1\big), \, a \big(c_{j+1}-1\big)\big]$,
where   $c_j = \big( \frac{2}{\rho} +1\big)^j$ for $j\in {\mathbb N}_0$.

Note that the obtained exponential sum approximation in $(\ref{approx1})$ has exactly the same structure as 
$(\ref{approx2})$, if one replaces $a_j$ by $c_j$. But these values do not coincide. For fixed $\rho>2$ we always have $c_j < a_j$ and for the length of the $j$-th interval we obtain
$$  \textstyle 
a (c_{j+1}-c_{j} ) = a \big(\frac{2}{\rho} + 1\big)^j \frac{2}{\rho} = \frac{2a}{\rho} \big(\frac{\rho+2}{\rho}\big)^j <  \frac{2a}{\rho-1} \big(\frac{\rho+1}{\rho-1}\big)^j = {b}_{j+1} - {b}_j. $$
\end{remark}

\subsection{Computational effort and numerical experiments}

Our method to find exponential sum approximations (of the same type on every interval) requires very little 
computational effort and is numerically stable in double-precision arithmetic. 
As we will show next, the required parameters for the exponential sum approximations at different intervals
are closely related and can be obtained by a simple coordinate transformation.
Moreover, we derive the same relative error at every interval.

\begin{theorem}\label{theotrans}
Let  $\rho>1$ and $a>0$ be given and let  
$a_j = \frac{\rho+1}{\rho} \big( \frac{\rho+1}{\rho-1}\big)^j$ and ${b}_j:= a \big(\big(\frac{\rho+1}{\rho-1}\big)^j-1\big)$ for $j \in {\mathbb N}_0$ as well as $a_{-1}:= \frac{\rho-1}{\rho}$ be given as before.
Then  we have for  $ x \in [{b}_j, {b}_{j+1}]$ 
 and 
$\tilde{x}:=\frac{x-{b}_j}{{b}_{j+1}-{b}_j} \in [0,1]$
\begin{align}\label{approx3}
\textstyle 
    \big|\frac{1}{x+a} -\frac{1}{aa_j} \sum\limits_{k=1}^N   w_{N,k}^{(L)} \,  
 {\mathrm e}^{-t_{N,k}^{(L)}  \frac{1}{\rho+1} \big(\frac{2 \rho \tilde{x}}{\rho-1}- 1\big)}\big| \le 
 \frac{2}{aa_{j-1}} \,  \big( \frac{\rho (2\tilde{x}  -1)+1 }{\rho(2 \tilde{x}-1)+ \rho^2}\big)^{2N} 
\le  \frac{2}{aa_{j-1}} \, \rho^{-2N}.
 \end{align}
 In particular, the approximating exponential sum depends (up to the pre-factor $a_j$) not on $j$ but only on $\tilde{x}$.
 Furthermore, the relative error 
 \begin{align} \label{relerr1}
 \textstyle  \frac{\frac{1}{x+a} - \sum\limits_{k=1}^N \frac{w_{N,k}^{(L)}}{a a_j} \,  
 {\mathrm e}^{-t_{N,k}^{(L)} \big(\frac{x+a}{aa_j} -1\big)}}{\frac{1}{a+x}}
 &= \textstyle  1 -  \frac{\rho}{\rho+1} \big( 1 + \frac{2 \tilde{x}}{\rho-1}\big) \sum\limits_{k=1}^N   w_{N,k}^{(L)} \,  
 {\mathrm e}^{-t_{N,k}^{(L)}  \frac{1}{\rho+1} \big(\frac{2 \rho \tilde{x}}{\rho-1}- 1\big)}
\end{align}
is independent of $j$. 
 \end{theorem}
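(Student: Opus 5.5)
This is a change of variables in Theorem \ref{theo2b}: rewrite the exponents, the error base and the relative error in terms of $\tilde{x}$, and check that $j$ cancels.

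\textbf{Step 1: express $x+a$ through $\tilde{x}$.} Put $q:=\frac{\rho+1}{\rho-1}$. Then $b_j = a(q^j-1)$ and
\[
b_{j+1}-b_j = a q^j (q-1) = \tfrac{2a}{\rho-1}\, q^j .
\]
Hence $x = b_j + \tilde{x}(b_{j+1}-b_j)$ gives
\[
x+a = a q^j \Big(1+\tfrac{2\tilde{x}}{\rho-1}\Big).
\]
Since $a a_j = a\frac{\rho+1}{\rho}q^j$, the factor $q^j$ cancels in the ratio:
\[
\tfrac{x+a}{a a_j} = \tfrac{\rho}{\rho+1}\Big(1+\tfrac{2\tilde{x}}{\rho-1}\Big).
\]

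\textbf{Step 2: the exponent.} Subtracting $1$ and simplifying,
\[
\tfrac{x+a}{a a_j}-1 = \tfrac{1}{\rho+1}\Big(\tfrac{2\rho\tilde{x}}{\rho-1}-1\Big).
\]
This is exactly the exponent in (\ref{approx3}). Consequently the exponential sum in (\ref{approx2}), which equals $\frac{1}{aa_j}\sum_k w_{N,k}^{(L)} \mathrm{e}^{-t_{N,k}^{(L)}(\frac{x+a}{aa_j}-1)}$, coincides with the sum in (\ref{approx3}). It depends on $j$ only through the prefactor $\frac{1}{aa_j}$.

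\textbf{Step 3: the error base.} Invert the ratio from Step 1:
\[
\tfrac{aa_j}{x+a} = \tfrac{(\rho+1)(\rho-1)}{\rho(\rho-1+2\tilde{x})}.
\]
Therefore
\[
1-\tfrac{aa_j}{x+a} = \tfrac{\rho(\rho-1+2\tilde{x}) - (\rho^2-1)}{\rho(\rho-1+2\tilde{x})} = \tfrac{\rho(2\tilde{x}-1)+1}{\rho(2\tilde{x}-1)+\rho^2},
\]
because $\rho(\rho-1+2\tilde{x}) = \rho^2+\rho(2\tilde{x}-1)$. The first inequality in (\ref{approx3}) then follows by substituting this into the first bound of (\ref{approx2}). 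The final bound $\rho^{-2N}$ follows either from Theorem \ref{theo2b} directly, or from checking that the base equals $-1/\rho$ at $\tilde{x}=0$ and $1/\rho$ at $\tilde{x}=1$ and is monotone increasing in $\tilde{x}$ (it is a Möbius function with positive denominator on $[0,1]$).

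\textbf{Step 4: the relative error.} Divide the absolute error by $\frac{1}{x+a}$, i.e.\ multiply by $x+a$. The first term gives $1$. The second gives $\frac{x+a}{aa_j}\sum_k\cdots$, and inserting the expressions from Steps 1 and 2 yields (\ref{relerr1}), which involves only $\rho$, $N$ and $\tilde{x}$, hence is independent of $j$.

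There is no real obstacle. The only step needing care is the algebra in Step 3: one has to verify that the $q^j$ factors cancel and that the numerator simplifies to $\rho(2\tilde{x}-1)+1$. The endpoint and monotonicity check is the safest way to confirm the last inequality in (\ref{approx3}).
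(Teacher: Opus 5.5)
Your proposal is correct and follows essentially the same route as the paper: substitute $x=b_j+\tilde{x}(b_{j+1}-b_j)$ into Theorem~\ref{theo2b}, obtain $\frac{x+a}{aa_j}=\frac{\rho}{\rho+1}\bigl(1+\frac{2\tilde{x}}{\rho-1}\bigr)$, and derive from it the exponent, the error base $1-\frac{aa_j}{x+a}$, and the relative error. The differences are cosmetic: you factor out $q^j$ directly rather than using $a_{j+1}=a_j\frac{\rho+1}{\rho-1}$, and you confirm the final bound $\rho^{-2N}$ by an endpoint/monotonicity check, whereas the paper relies on the bound $\bigl|1-\frac{aa_j}{x+a}\bigr|\le\frac{1}{\rho}$ already established in the proof of Theorem~\ref{theo2b}.
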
 
\begin{proof}
For  $x={b}_j + \tilde{x} ({b}_{j+1} - {b}_j)$ with $\tilde{x} \in [0,1]$, we obtain the exponential sum approximation from Theorem \ref{theo2b},
\begin{align*}
\textstyle  \sum\limits_{k=1}^N \, \Big(\frac{w_{N,k}^{(L)}}{a a_j} \,  
 {\mathrm e}^{\frac{t_{N,k}^{(L)}(a_j-1)}{a_j}} \Big)\, 
  {\mathrm e}^{-\frac{t_{N,k}^{(L)} x}{a a_j}} 
  &=  \textstyle  \sum\limits_{k=1}^N  \frac{1}{a a_j} \, w_{N,k}^{(L)} \,  
 {\mathrm e}^{-t_{N,k}^{(L)} \big(\frac{x+a}{aa_j} -1\big)} ,
\end{align*}
where by $a_{j+1}= a_{j} \big(\frac{\rho+1}{\rho-1}\big)$,
\begin{align*}
\textstyle \frac{x+a}{aa_j} -1&= \textstyle\frac{{b}_j + \tilde{x} ({b}_{j+1} - {b}_j) +a}{aa_j} -1 
= \frac{ a(\frac{\rho}{\rho+1} a_j -1)+ \tilde{x} \big(a(\frac{\rho}{\rho+1} a_{j+1} -1) - a(\frac{\rho}{\rho+1} a_j -1)\big) +a}{aa_j}-1\\
&
= \textstyle \frac{ \frac{\rho}{\rho+1} 
\big(a_j+ \tilde{x} ( a_{j+1} -  a_j)\big)}{a_j}-1
= \textstyle \frac{\rho}{\rho+1} \big( 1 + \frac{2 \tilde{x}}{\rho-1}\big) -1
= \frac{1}{\rho+1} \big(\frac{2 \rho \tilde{x}}{\rho-1}- 1\big)
\end{align*}
and $\frac{x+a}{aa_j} = \frac{\rho}{\rho+1} \big( 1 + \frac{2 \tilde{x}}{\rho-1}\big)$. Thus we obtain
$$ \textstyle \sum\limits_{k=1}^N  \frac{1}{a a_j} \, w_{N,k}^{(L)} \,  
 {\mathrm e}^{-t_{N,k}^{(L)} \big(\frac{x+a}{aa_j} -1\big)} 
 =  \frac{1}{a a_j}\sum\limits_{k=1}^N    w_{N,k}^{(L)} \,  
 {\mathrm e}^{-t_{N,k}^{(L)}  \frac{1}{\rho+1} \big(\frac{2 \rho \tilde{x}}{\rho-1}- 1\big)} $$
 and 
 \begin{align*}
\textstyle  \big( 1- \frac{aa_j}{x+a} \big) &=  
\textstyle 1- \frac{\rho+1}{\rho} \frac{1}{\big( 1 + \frac{2 \tilde{x}}{\rho-1}\big)}
=  1- \frac{\rho+1}{\rho} \frac{\rho-1}{\rho-1 +2 \tilde{x}}
= 1- \frac{\rho+1}{\rho} \big( 1- \frac{2\tilde{x}}{\rho-1 +2 \tilde{x}}\big)\\
& \textstyle = -\frac{1}{\rho} + \frac{ 2 (\rho +1) \tilde{x}}{\rho(\rho-1 +2 \tilde{x})}
=\frac{-(\rho-1 +2 \tilde{x}) + 2 (\rho +1) \tilde{x} }{\rho(\rho-1 +2 \tilde{x})}
=\frac{\rho (2\tilde{x}  -1)+1 }{\rho(2 \tilde{x}-1)+ \rho^2}.
\end{align*}
Moreover, we conclude 
that the relative error 
\begin{align*}
\textstyle  \frac{\frac{1}{x+a} - \sum\limits_{k=1}^N \frac{w_{N,k}^{(L)}}{a a_j} \,  
 {\mathrm e}^{-t_{N,k}^{(L)} \big(\frac{x+a}{aa_j} -1\big)}}{\frac{1}{a+x}}
 &= \textstyle 1 -   \frac{x+a}{a a_j}\Big(\sum\limits_{k=1}^N w_{N,k}^{(L)}
 {\mathrm e}^{-t_{N,k}^{(L)} \big(\frac{x+a}{aa_j} -1\big)} \Big) \\
 &= \textstyle  1 -  \frac{\rho}{\rho+1} \big( 1 + \frac{2 \tilde{x}}{\rho-1}\big) \sum\limits_{k=1}^N   w_{N,k}^{(L)} \,  
 {\mathrm e}^{-t_{N,k}^{(L)}  \frac{1}{\rho+1} \big(\frac{2 \rho \tilde{x}}{\rho-1}- 1\big)}
\end{align*}
is indeed independent of $j$, see also Figures $\ref{fig1}$ and $\ref{fig2}$.
Note that the same is true for the relative error bounds that follow from  Theorem $\ref{theo2b}$, i.e., we find that
$$ \textstyle   \frac{2}{aa_{j-1}} \Big( 1-\frac{aa_j}{x+a}\Big)^{2N}\, (x+a) =  
\frac{2(x+a)(\rho+1)}{aa_{j} (\rho-1)} \Big( \frac{\rho (2\tilde{x}  -1)+1 }{\rho(2 \tilde{x}-1)+ \rho^2}\Big)^{2N}
= \frac{2\rho}{\rho-1} \big( 1 + \frac{2 \tilde{x}}{\rho-1}\big)
\Big( \frac{\rho (2\tilde{x}  -1)+1 }{\rho(2 \tilde{x}-1)+ \rho^2}\Big)^{2N}
$$
is independent of $j$.
\end{proof}

Theorem \ref{theotrans} shows that one does not need to compute new weights and 
exponent parameters for the approximating exponential sums at different intervals 
$[{b}_j, {b}_{j+1}]$, since a simple transformation to the coordinate $\tilde{x}$ can be applied instead.
\medskip

\paragraph{Numerical experiments.}
The proposed approach for approximating $\frac{1}{a+x}$ by an exponential sum enables us to 
achieve very small approximation errors with very short exponential sums. 
Once the nodes and weights of the Gauss-Laguerre quadrature formula are known, these results can be computed in double-precision arithmetic.
We give two numerical examples for $N=8$ and for $N=10$, and provide the corresponding nodes and weights with at least $32$ digits of precision in 
Tables \ref{tab1} and \ref{tab2}.
Note that for a given $x \ge 0$ the index $j$ such that ${b}_j \le x < {b}_{j+1}$ can be easily found using a while-loop.
A Matlab implementation of the algorithm can be found at 
\texttt{http://na.math.uni-goettingen.de/en/Software/index.html}.

\begin{example}
We  approximate $\frac{1}{x+a}$  by an exponential sum of length $N=8$.
 Applying Theorem $\ref{theo2b}$  for  $\rho= 4$,  we choose
$a_j= \frac{5}{4} \big(\frac{5}{3}\big)^j$, $j \ge -1$, and obtain 
$$  \textstyle \Big| \frac{1}{x+a} - \sum\limits_{k=1}^8 \, \Big(\frac{3^j \cdot 4 \cdot w_{8,k}^{(L)}}{5^{j+1} a} \,  
 {\mathrm e}^{t_{8,k}^{(L)}(1 -  \frac{4 \cdot 3^{j}}{5^{j+1}})} \Big)\, 
  {\mathrm e}^{-\frac{4 \cdot 3^{j} \cdot  t_{8,k}^{(L)} \cdot x}{5^{j+1} \, a }} \Big| \le 
\textstyle \frac{8 \cdot 3^{j-1}}{5^{j}a} \big(1- \frac{5^{j+1}a}{4 \cdot 3^j (a+x)}\big)^{16} \le \frac{8 \cdot 3^{j-1}}{5^{j}a}\, 16^{-8} $$
for all 
$x \in  \big[ a((\frac{5}{3})^j -1), \,a  ((\frac{5}{3})^{j+1} -1) \big]$.
Taking $a=1$ we have to consider the intervals $[0, \frac{2}{3}]$, $[\frac{2}{3}, \frac{16}{9}]$, 
$[\frac{16}{9}, \frac{98}{27}]$, etc., where the length of the intervals grows by the factor $\frac{5}{3}$. 
The computed absolute error and relative error computed 
are illustrated in Figure $\ref{fig2}$ (left column).  For $N=8$ and $j=0$, 
$(\ref{approx2})$ provides an error estimate with a bound smaller than $\frac{8}{3} 16^{-8} \approx 6.2088 \cdot 10^{-10}$.
 The improved error bound according to $(\ref{imp})$ with $K=10$, i.e., with the first $10$ terms $\sum\limits_{k=1}^8 w_{8,k}^{(L)}  L_{n+16} (t_{8,k}^{(L)})$, $n=0, \ldots, 9$, being computed exactly, yields an error smaller than $10^{-13}$, which is nearly tight, see Figure $\ref{fig2}$, bottom left.

 \begin{figure}[bhtp]
\begin{center}
	\includegraphics[scale=0.35]{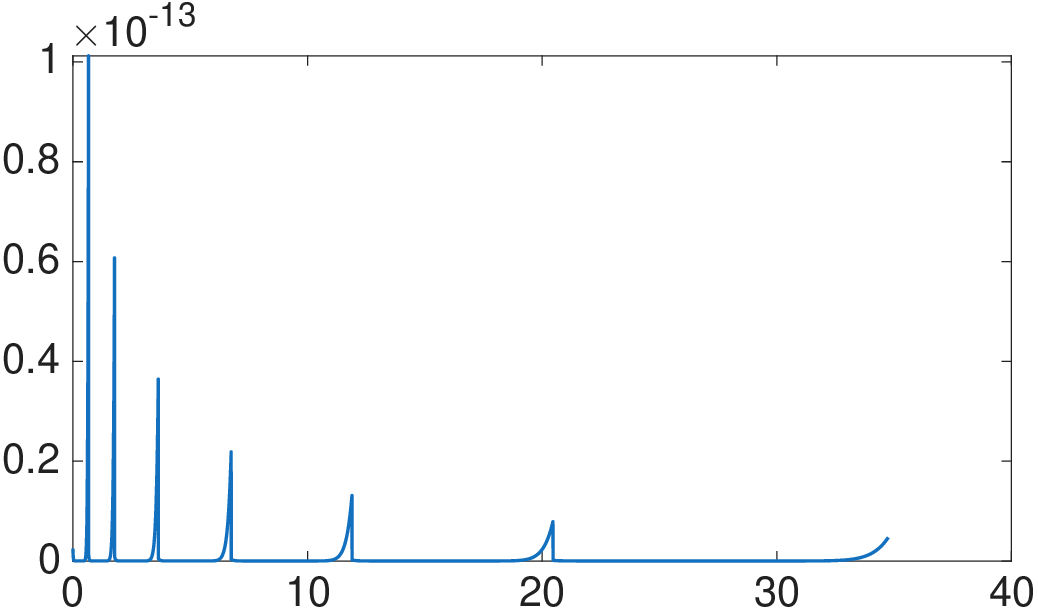} \hspace{5mm}
	\includegraphics[scale=0.35]{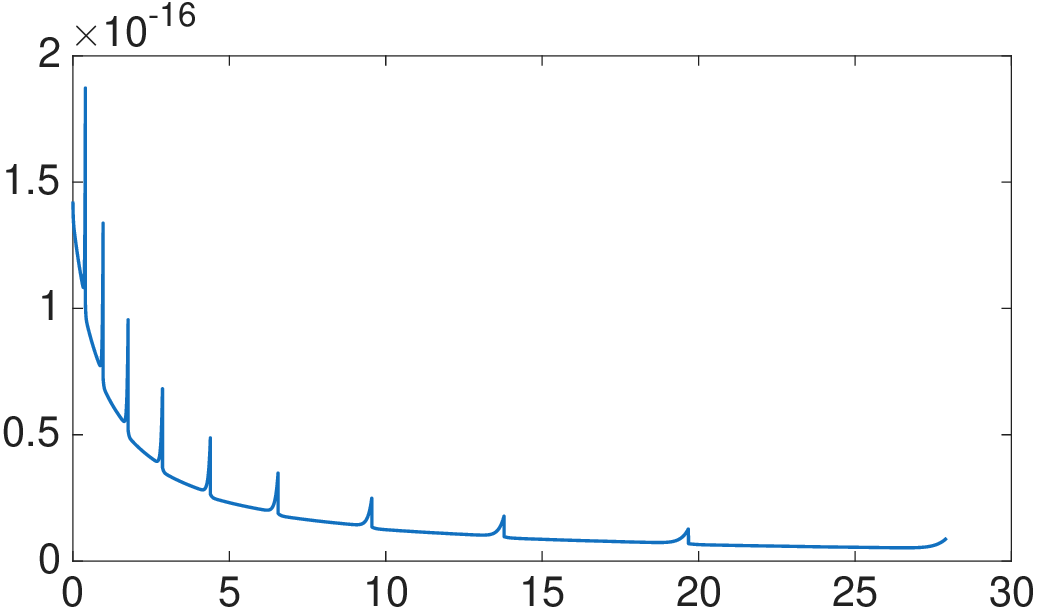} \\
	\includegraphics[scale=0.35]{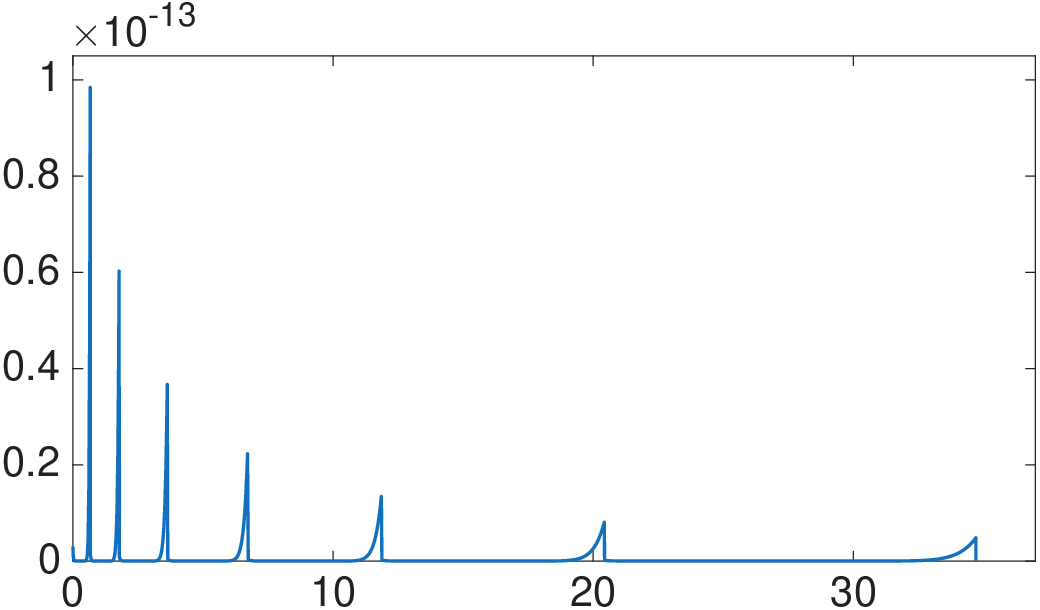}\hspace{5mm}
	\includegraphics[scale=0.35]{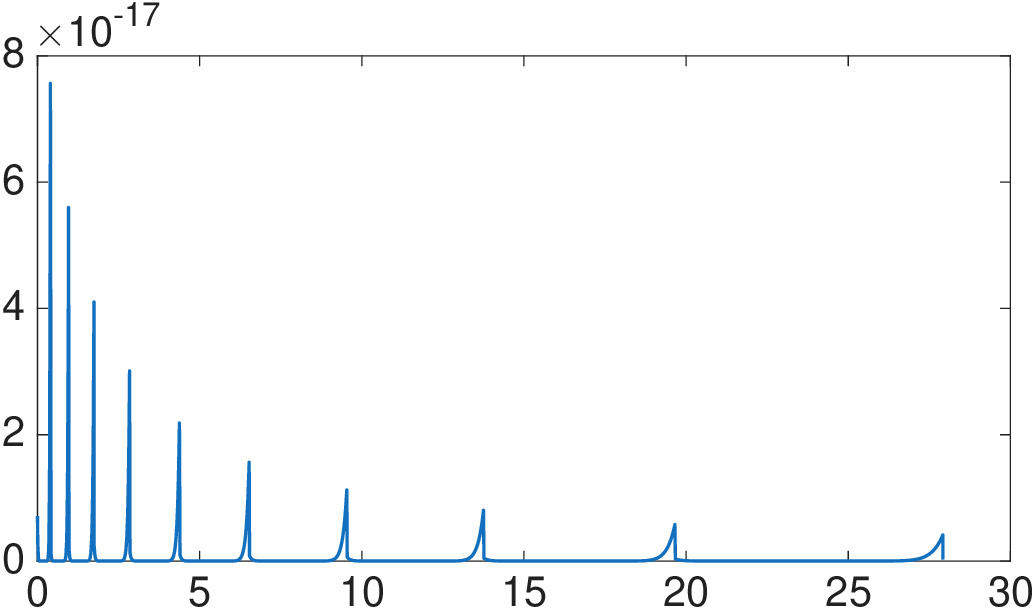}\\
	\includegraphics[scale=0.35]{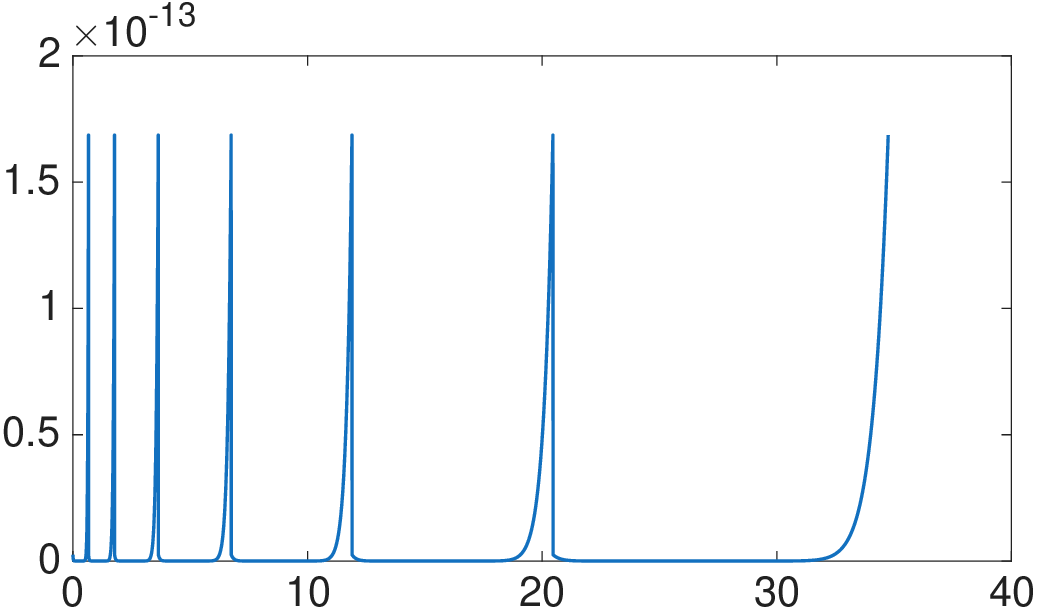}  \hspace{5mm}
         \includegraphics[scale=0.35]{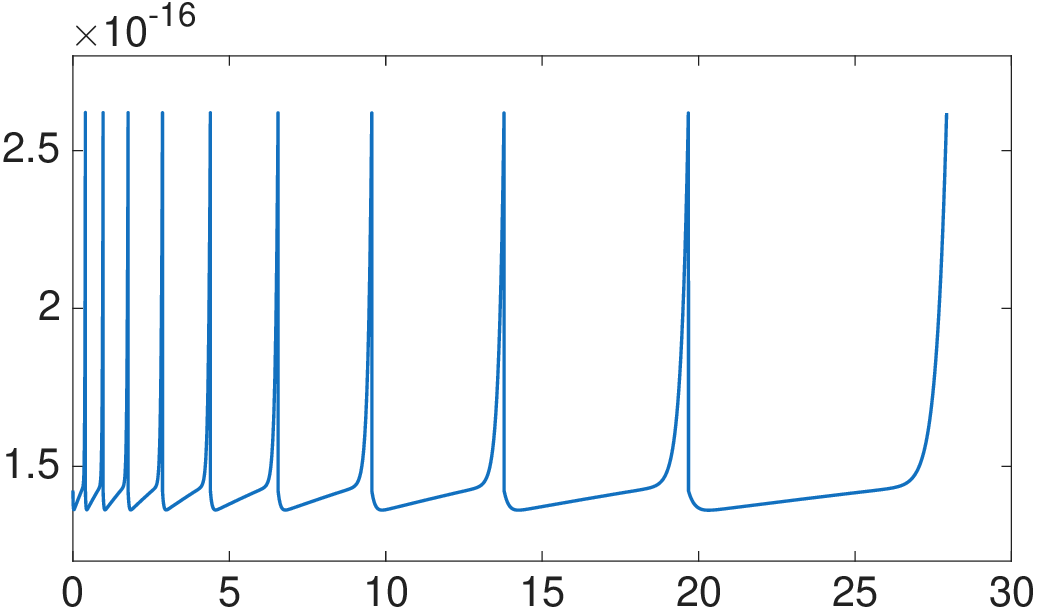} 
	\end{center}
\captionsetup{aboveskip=0pt, justification=justified, labelfont=small, labelsep=space, labelfont=bf}
\caption{\small
Top: Illustration of the error of $ \big|\frac{1}{x+1} - 
\sum\limits_{k=1}^N  \Big(\frac{(\rho-1)^j \rho  \, w_{N,k}^{(L)}}{(\rho+1)^{j+1}}  
 {\mathrm e}^{t_{N,k}^{(L)}(1 -  \frac{\rho (\rho-1)^j}{(\rho+1)^{j+1}})} \Big) 
  {\mathrm e}^{-\frac{\rho (\rho-1)^j  t_{N,k}^{(L)} x}{(\rho+1)^{j+1}  }}\big|$
for $N=8$,  $\rho=4$ and $x \in [0,34.7225)$  (7 intervals) (left column), and for $N=8$,  $\rho=6$ and $x \in [0,27.9255)$ (10 intervals) (right column).
Top: absolute numerical errors for $\rho=4$ (left) and $\rho = 6$ (right). 
Middle: theoretical error bounds according to (\ref{imp}) with $K=10$ for $\rho=4$ (left) and $\rho=6$ (right).
Bottom: relative errors for $\rho=4$ (left) and $\rho = 6$ (right). 
}
\label{fig2}
\end{figure}

 \begin{table}[htbp]
\scriptsize
\caption{\small Nodes and weights for Gauss-Laguerre quadrature for $N=8$.}
\begin{center}
\begin{tabular}{|c|l|l|}
\hline
$k$ & nodes $t_{8,k}^{(L)}$ &   weights $w_{8,k}^{(L)}$ \\
\hline
1 & 0.17027963230510099978886185660830 & 0.36918858934163752992058283937570394411528484\\
2 & 0.90370177679937991218602022355509 &  0.41878678081434295607697858133333431953107646\\
3 & 2.25108662986613068930711836696864 & 0.17579498663717180569965986677677365777322531\\
4 & 4.26670017028765879364942182690064 &  0.03334349226121565152213253493440643668469857\\
5 & 7.04590540239346569727932548211936 & 0.00279453623522567252493892414792863974992724\\
6 & 10.7585160101809952240599567880320 & 0.00009076508773358213104238501493357342695903\\
7 & 15.7406786412780045780287611584028 & 0.00000084857467162725315448680183089320313175\\
8 & 22.8631317368892641057005342974131 & 0.00000000104800117487151038161508853551569680\\

\hline
\end{tabular}
\end{center}
\label{tab1}
\end{table}

\medskip
 
 Taking N=8 and $\rho=6$, we similarly get $a_j= \frac{7}{6}  \big(\frac{7}{5}\big)^j$, 
 intervals $\big[ a((\frac{7}{5})^j -1), \,a  ((\frac{7}{5})^{j+1} -1) \big]$, and the error bound $(\ref{imp})$ with $K=10$ 
 yields for $a=1$ an error less than 
 $8 \cdot 10^{-17}$, which is below the double-precision limit. The computed absolute error and relative error 
 are illustrated in Figure $\ref{fig2}$ (right column). 
 To reach an error below double-precision limit we had to compute the function to be approximated in higher precision arithmetic.
\end{example}

\begin{example}
Next, we  approximate $\frac{1}{x+a}$ using an exponential sum of length $N=10$.
 According to Theorem $\ref{theo2b}$
 we obtain the theoretical error estimate 
\begin{align*}  \textstyle \Big|  \frac{1}{x+a} - \sum\limits_{k=1}^{10} \, 
\frac{(\rho-1)^j \rho \,  w_{10,k}^{(L)}}{(\rho+1)^{j+1} a} \,  
 {\mathrm e}^{-t_{10,k}^{(L)}\big(\frac{\rho (\rho-1)^j(x+a)}{a(\rho+1)^{j+1}}-1\big)} \Big|
&\le 
\textstyle \frac{2\rho(\rho-1)^{j-1}}{a(\rho+1)^{j}} 
\Big( 1- \frac{a(\rho+1)^{j+1}}{\rho (\rho-1)^j (a+x)}\Big)^{20} \\
&\le \textstyle \frac{2\rho(\rho-1)^{j-1}}{a(\rho+1)^{j}} \, \rho^{-20}
\end{align*}
for all 
$x \in  \big[ a\big(\big(\frac{\rho+1}{\rho-1}\big)^j -1\big), \,a  \big(\big(\frac{\rho+1}{\rho-1}\big)^{j+1} -1) \big]$.
For $a=1$ and $\rho=3$, we hence consider the intervals $[0, 1]$, $[1, \, 3]$, 
$[3, 7]$ etc., where the length of the intervals grows by the factor $2$. 
The computed absolute error and relative error are illustrated in Figure $\ref{fig1}$ (left column).
The error estimate above gives  $\frac{3}{2^{j}} 9^{-10} \le 8.61 \cdot 2^{-j} \cdot 10^{-10}$.
Employing the improved error bound $(\ref{imp})$, which is obtained by computing the sums 
$\sum\limits_{k=1}^{10} w_{10,k}^{(L)}  L_{n+20} (t_{10,k}^{(L)})$ for $n=0, \ldots , 9$ exactly, we obtain the error illustrated in Figure $\ref{fig1}$, bottom (left).

For $a=1$ and $\rho=4$, smaller intervals $[0,\frac{2}{3}]$, $[\frac{2}{3}, \frac{16}{9}]$ etc.\ must be considered.
The computed absolute error and relative error for $\rho=4$ are illustrated in Figure $\ref{fig1}$ (right), together with the theoretical error estimate $(\ref{imp})$ for $K=10$.

 \begin{table}[htbp]
\scriptsize
\caption{\small Nodes and weights for Gauss-Laguerre quadrature for $N=10$.}
\begin{center}
\begin{tabular}{|c|l|l|}
\hline
$k$ & nodes $t_{10,k}^{(L)}$ &   weights $w_{10,k}^{(L)}$ \\
\hline
1 &  0.13779347054049243083077250565271 &  0.308441115765020141547470834677860695628728886538337442 \\
2 &  0.72945454950317049816037312167608  & 0.401119929155273551515780309912819514795483616962113018  \\
3 &  1.80834290174031604823292007575061 &  0.218068287611809421588648523474646726742778538412188941  \\
4 &  3.40143369785489951448253222140839 & 0.062087456098677747392902129313517953695909065683802092  \\
5 &  5.55249614006380363241755848686876  &  0.009501516975181100553839072194171991225862450401579753  \\
6 &  8.33015274676449670023876719727452  &    0.000753008388587538775455964353675663901792039140143629  \\
7 &  11.8437858379000655649185389191416   & 0.000028259233495995655674225638268500212828033164744375 \\
8 &  16.2792578313781020995326539358336  & 0.000000424931398496268637258657665974712354648108019864  \\
9 &   21.9965858119807619512770901955945 &   0.000000001839564823979630780921535224355938247982612777 \\
10&  29.9206970122738915599087933407992  &  0.000000000000991182721960900855837754728324473606458109
 \\
\hline
\end{tabular}
\end{center}
\label{tab2}
\end{table}

\end{example}

 \begin{figure}[htb]
\begin{center}
	\includegraphics[scale=0.35]{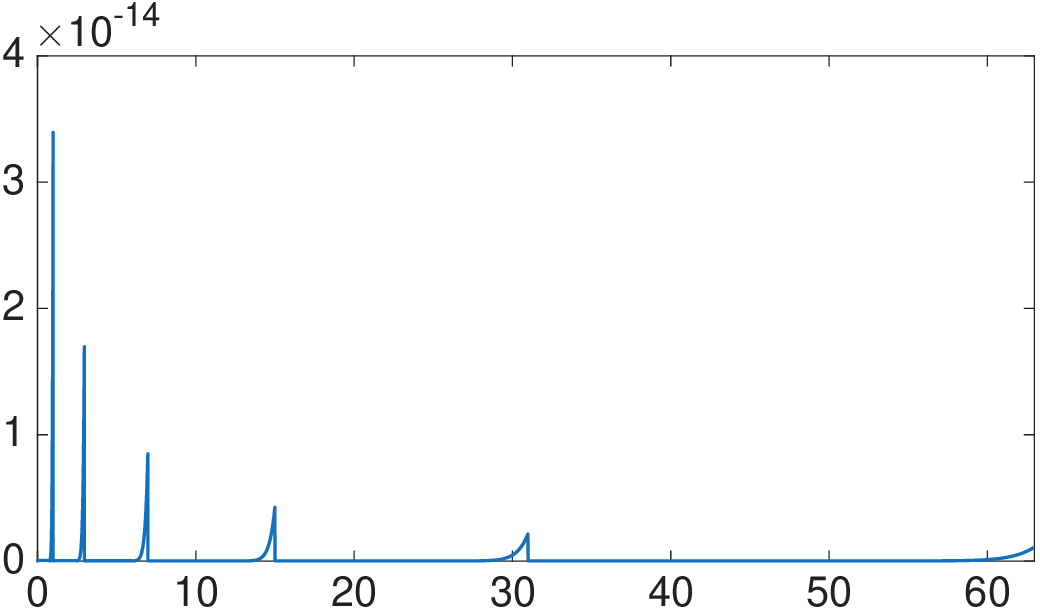} \hspace*{5mm}
	\includegraphics[scale=0.35]{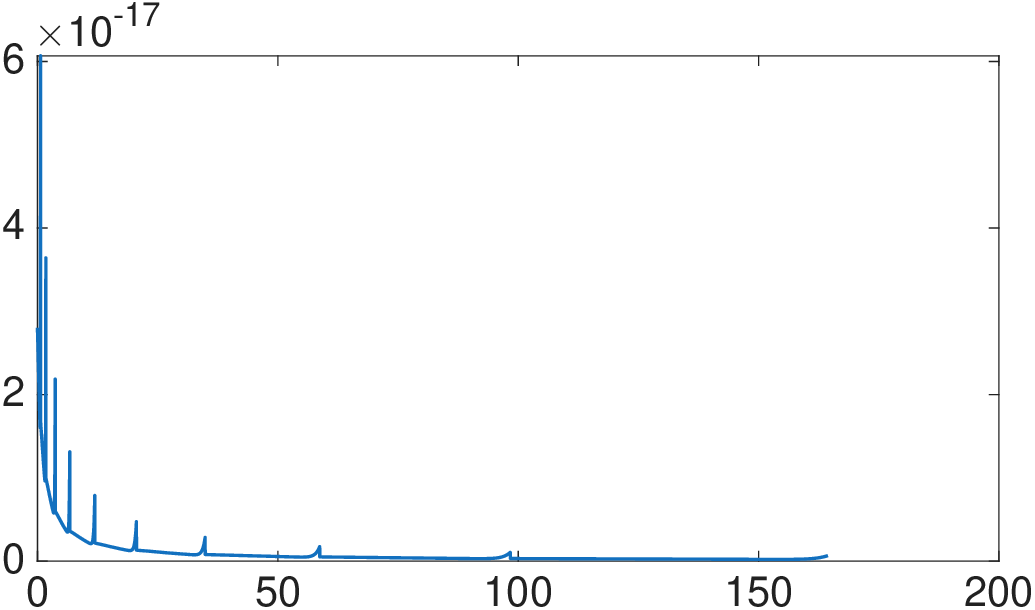} \\
	\includegraphics[scale=0.35]{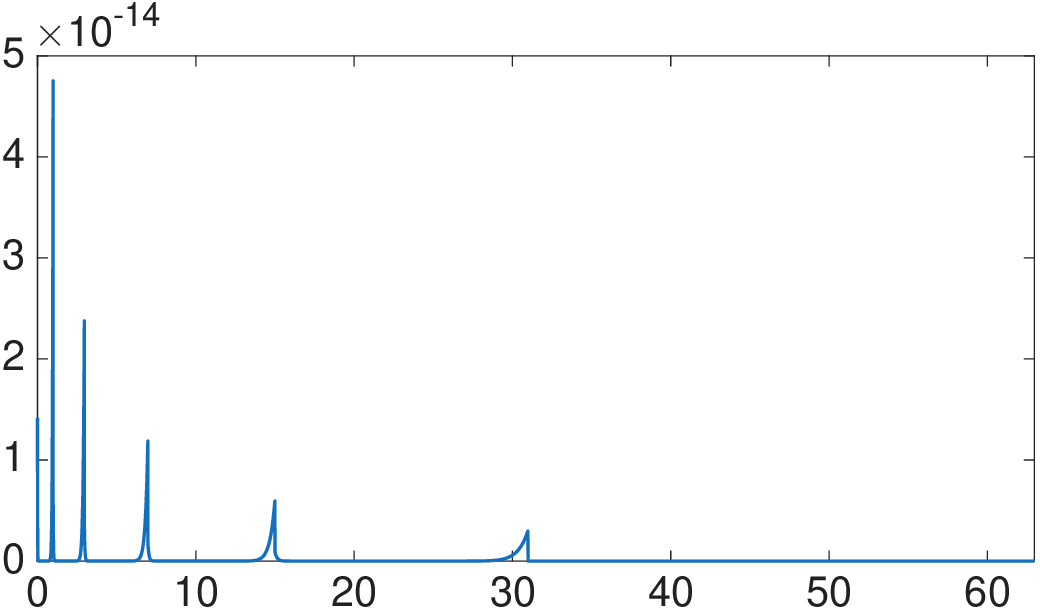} \hspace*{5mm}
	\includegraphics[scale=0.35]{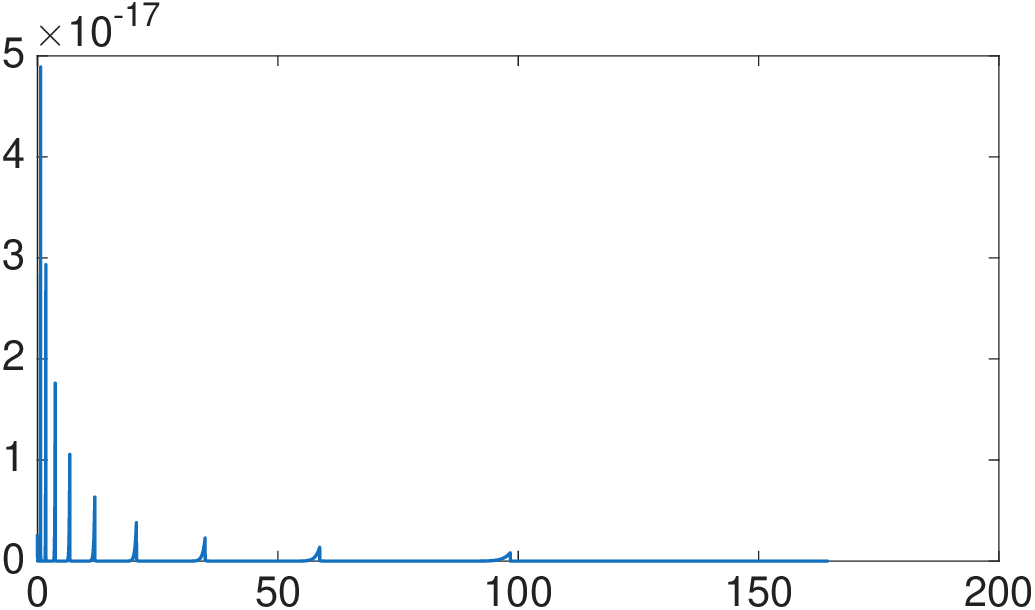}\\
	\hspace*{1mm} \includegraphics[scale=0.35]{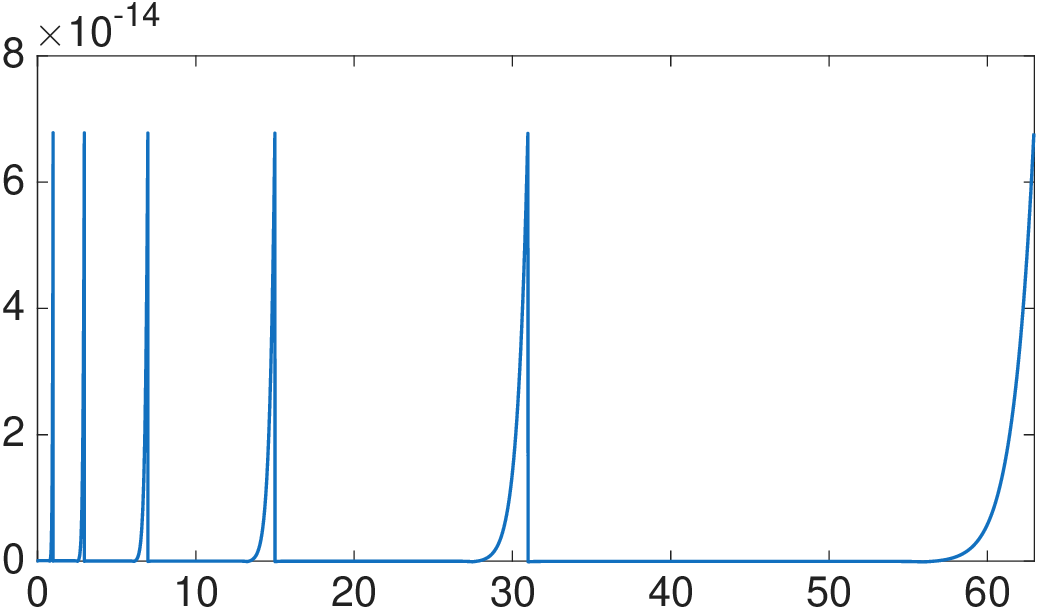} \hspace*{5mm}
         \includegraphics[scale=0.35]{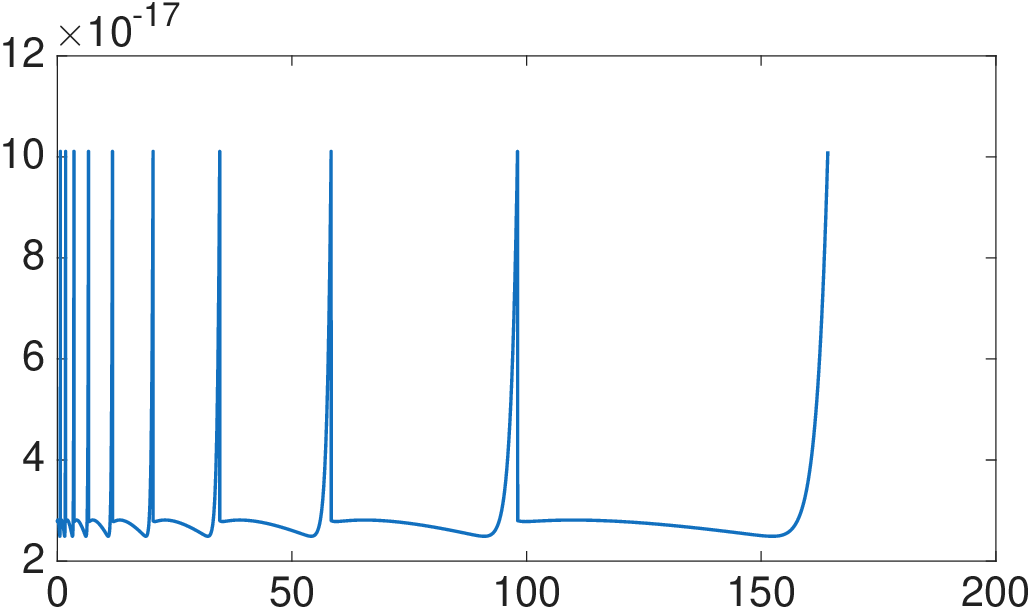} 
	\end{center}
\captionsetup{aboveskip=0pt, justification=justified, labelfont=small, labelsep=space, labelfont=bf}
\caption{\small
Top: Illustration of the error of $ \big|\frac{1}{x+1} - 
\sum\limits_{k=1}^N \frac{(\rho-1)^j \rho  w_{N,k}^{(L)}}{(\rho+1)^{j+1}}  
 {\mathrm e}^{-t_{N,k}^{(L)}(\frac{\rho (\rho-1)^j(x+1)}{(\rho+1)^{j+1}} -1)} \big|$
for $N=10$,  $\rho=3$ and $x \in [0,63)$ (6 intervals) (left column), and for $N=10$,  $\rho=4$ and 
$x \in [0,164.3817)$ (10 intervals) (right column).
Top: absolute numerical errors for $\rho=3$ (left) and $\rho = 4$ (right). 
Middle: error  bounds according to (\ref{imp}) for $\rho=3$ (left) and $\rho=4$ (right).
Bottom: relative errors for $\rho=3$ (left) and $\rho = 4$ (right). 
}
\label{fig1}
\end{figure}

\begin{remark}
We note that other known approximations of $\frac{1}{x+a}$ on the interval $[0, {b}]$, which can for example be found in \cite{Braess86,Braess05}
or computed by the Remez algorithm in \cite{Hackbusch19},
can also be used to obtain equally good approximations on further intervals $[{b}_j, {b}_{j+1}]$,
following the same approach as in Theorem $\ref{theo2b}$. Assume that we know an approximation 
$$ \textstyle  \big| \frac{1}{x+a} - \sum\limits_{k=1}^N c_k \, {\mathrm e}^{-T_k \, x} \big| < \epsilon \qquad x \in [0, {b}]. $$
Let now $a_j:=\big( \frac{{b}+a}{a} \big)^j$ and ${b}_j := a (a_j-1)$. Then it follows that 
\begin{align*}
 \textstyle  \big| \frac{1}{x+a} - \frac{1}{a_j} \sum\limits_{k=1}^N c_k \, {\mathrm e}^{-T_k \, \big( \frac{x}{a_j} - a(1 - \frac{1}{a_j})\big)} \big| < \frac{\epsilon}{a_j} \qquad x \in [{b}_j, {b}_{j+1}].
 \end{align*}
Indeed, letting $\tilde{x}_j:= \frac{x-{b}_j}{{b}_{j+1}-{b}_j} =  \frac{x-a(a_j-1)}{{b} \,  a_j} \in [0, 1]$
for $x \in [{b}_j, {b}_{j+1}] $,
we observe that 
 \begin{align*}
\textstyle  \frac{1}{x+a}  = \frac{1}{a+{b}_j+ (x-{b}_j)} = 
\frac{1}{a{b}_j+ \tilde{x}_j {b} a_j}
 =\frac{1}{a_j} \frac{1}{a + \tilde{x}_j {b}}
 \end{align*}
 with $\tilde{x}_j {b} \in [0, {b}]$ such that 
 \begin{align*}
\textstyle \Big| \frac{1}{a_j} \Big(\frac{1}{a + \tilde{x}_j {b}}-  
 \sum\limits_{k=1}^N c_k \, {\mathrm e}^{-T_k \, {b} \tilde{x}_j} \Big)
\Big| < \frac{\epsilon}{a_j},
\end{align*}
where ${b} \tilde{x}_j = \frac{x - a(a_j-1)}{a_j}$.
 \end{remark}

\subsection{Application to approximate the logarithm function with high precision}

We apply the obtained representation of $\frac{1}{x+a}$ by a short exponential sum to derive
a short exponential sum representation of the logarithm $\log(x)$ for $x > 1$.

We employ the same notations as in the last subsections. Assume that  $\rho >2$, $a=1$, ${b}_j = a \big( \big(\frac{\rho+1}{\rho-1}\big)^j - 1\big)$ for $j \in {\mathbb N}_0$ and $a_j= \frac{\rho+1}{\rho} \big( \frac{\rho+1}{\rho-1}\big)^j$ for $j \in {\mathbb Z}$, $j \ge -1$, as in Theorem \ref{theo2b}.
Then, for $x \in [1, {b}_1+1]$ we obtain by (\ref{approx2}) with $j=0$,
\begin{align*}
\textstyle \log(x) &=\textstyle  \int\limits_0^{x-1} \frac{1}{t+1} {\mathrm d}t \approx \int\limits_0^{x-1} 
 \sum\limits_{k=1}^N \, \Big(\frac{w_{N,k}^{(L)}}{a_0} \,  
 {\mathrm e}^{\frac{t_{N,k}^{(L)}(a_0-1)}{a_0}} \Big)\, 
  {\mathrm e}^{-\frac{t_{N,k}^{(L)} t}{a_0}} \, {\mathrm d}t \\
  &= \textstyle  
 \sum\limits_{k=1}^N \, \Big(\frac{w_{N,k}^{(L)}}{a_0} \,  
 {\mathrm e}^{\frac{t_{N,k}^{(L)}(a_0-1)}{a_0}} \Big)\, 
 \int\limits_0^{x-1} {\mathrm e}^{-\frac{t_{N,k}^{(L)} t}{a_0}} \, {\mathrm d}t
 =  \sum\limits_{k=1}^N \frac{w_{N,k}^{(L)}}{t_{N,k}^{(L)}} \,  
 {\mathrm e}^{\frac{t_{N,k}^{(L)}(a_0-1)}{a_0}} \,
 \big(1 -  {\mathrm e}^{-\frac{t_{N,k}^{(L)} (x-1)}{a_0}} \big)\\
& \textstyle  = \sum\limits_{k=1}^N \frac{w_{N,k}^{(L)}}{t_{N,k}^{(L)}} \,  
 \big({\mathrm e}^{t_{N,k}^{(L)}(1 - \frac{1}{a_0})}  - {\mathrm e}^{t_{N,k}^{(L)} (1-\frac{x}{a_0})} \big)
 = \sum\limits_{k=1}^N \frac{w_{N,k}^{(L)}}{t_{N,k}^{(L)}} \,  
 \big({\mathrm e}^{\frac{t_{N,k}^{(L)}}{\rho+1}} - {\mathrm e}^{t_{N,k}^{(L)} (1-\frac{x\rho}{(\rho+1)})} \big).
  \end{align*} 
  In particular, for $x= {b}_1+1= \frac{\rho+1}{\rho-1}$, we obtain 
  \begin{align*}
  \textstyle  \log({b}_1+1) = \log\big(\frac{\rho+1}{\rho-1}\big)  \approx 
   \sum\limits_{k=1}^N \frac{w_{N,k}^{(L)}}{t_{N,k}^{(L)}} \,  
 \big({\mathrm e}^{\frac{t_{N,k}^{(L)}}{\rho+1}} - {\mathrm e}^{t_{N,k}^{(L)} (1-\frac{\rho}{(\rho-1)})} \big)
=  \sum\limits_{k=1}^N \frac{w_{N,k}^{(L)}}{t_{N,k}^{(L)}} \,  
 \big({\mathrm e}^{\frac{t_{N,k}^{(L)}}{\rho+1}} - {\mathrm e}^{-\frac{t_{N,k}^{(L)}}{\rho-1}}  \big).
 \end{align*} 
 This directly implies that 
 \begin{align} \label{intj}
 \textstyle \int\limits_{{b}_j}^{{b}_{j+1}} \frac{1}{t+1} \, {\mathrm d}t = 
 \log \big(\frac{{b}_{j+1}+1}{{b}_{j}+1}\big) =\log\big(\frac{\rho+1}{\rho-1}\big) 
\approx \sum\limits_{k=1}^N \frac{w_{N,k}^{(L)}}{t_{N,k}^{(L)}} \,  
 \big({\mathrm e}^{\frac{t_{N,k}^{(L)}}{\rho+1}} - {\mathrm e}^{-\frac{t_{N,k}^{(L)}}{\rho-1}}  \big)
 \end{align}
 is independent of $j \in {\mathbb N}_0$.
Thus, for $x \in [{b}_j+1, {b}_{j+1}+1)$, $j \ge 0$, we find 
\begin{align}
\nonumber
\log(x) &= \textstyle \int\limits_0^{{b}_j} \frac{1}{t+1} \, {\mathrm d}t + \int\limits_{{b}_j}^{x-1} \frac{1}{t+1} \, {\mathrm d}t \\
\nonumber
&\approx \textstyle j \, \sum\limits_{k=1}^N \frac{w_{N,k}^{(L)}}{t_{N,k}^{(L)}} \,  
 \big({\mathrm e}^{\frac{t_{N,k}^{(L)}}{\rho+1}} - {\mathrm e}^{-\frac{t_{N,k}^{(L)}}{\rho-1}}  \big)
 +  \sum\limits_{k=1}^N \, \Big(\frac{w_{N,k}^{(L)}}{a_j} \,  
 {\mathrm e}^{t_{N,k}^{(L)}(1 - \frac{1}{a_j})} \Big)\, 
 \int\limits_{{b}_j}^{x-1}  {\mathrm e}^{-\frac{t_{N,k}^{(L)} t}{a_j}} \, {\mathrm d}t\\
 & = \textstyle 
\sum\limits_{k=1}^N \frac{w_{N,k}^{(L)}}{t_{N,k}^{(L)}} \,  \Big(
 (j+1) \, {\mathrm e}^{\frac{t_{N,k}^{(L)}}{\rho+1}} -  j \, {\mathrm e}^{-\frac{t_{N,k}^{(L)}}{\rho-1}} 
 - {\mathrm e}^{t_{N,k}^{(L)} (1 - \frac{x}{a_j})} 
\Big).
\label{logx}
  \end{align}
 Here, only the last term ${\mathrm e}^{-t_{N,k}^{(L)} \frac{x}{a_j}}$ in the sum depends on $x$ (and $j$), while all other terms can be precomputed to obtain a highly accurate approximation with only a few operations.
The  representation  (\ref{logx}) holds for all $x$ in the interval $[{b}_j +1, {b}_{j+1}+1)$ and can itself be seen as an approximation of $\log(x)$ by a short exponential sum. 

A second representation of $\log(x)$ that is even more accurate can be obtained by using the identity 
$\log(\frac{x}{{\mathrm e}}) = \log(x) - 1$. Thus, in a first step, we can apply a while-loop to find 
$k \in {\mathbb N}$ such that $\exp(k) \le x < \exp(k+1)$ and then use the representation (\ref{logx}) only for 
$x'= {\mathrm e}^{-k} \, x \in [1, {\mathrm e})$. In this case, the number of needed intervals is small. For example, we obtain for $\rho= 4$ that ${\mathrm e} < {b}_2+1$ and for 
$\rho= 6$ that ${\mathrm e} < {b}_3+1$.
 
 For the error we obtain 
 
 \begin{corollary}\label{corolog}
 Assume that we have approximated $\frac{1}{x+1}$ by an exponential sum of length 
$N \ge 2$ with $\rho>2$ as given in Theorem $\ref{theo2b}$. 
Then  for $x  \in [1, \infty)$ and 
$j:= \left\lfloor \frac{\log(x)}{\log(\rho+1)-\log(\rho-1)} \right\rfloor$, i.e., $x \in \big[\big(\frac{\rho+1}{\rho-1}\big)^{j}, 
\big(\frac{\rho+1}{\rho-1}\big)^{j+1} \big)$,
the  error  
\begin{align} \label{logerror} \textstyle E(x) = \log(x) - \sum\limits_{k=1}^N \frac{w_{N,k}^{(L)}}{t_{N,k}^{(L)}} \,  \Big(
 (j+1) \, {\mathrm e}^{\frac{t_{N,k}^{(L)}}{\rho+1}} -  j \, {\mathrm e}^{-\frac{t_{N,k}^{(L)}}{\rho-1}} 
 - {\mathrm e}^{t_{N,k}^{(L)} (1 - \frac{x}{a_j})}  \Big)
 \end{align}
satisfies 
\begin{align*} \textstyle |E(x)|  \le
\frac{2\rho}{(\rho-1)^2} \left\lceil \frac{\log(x)}{\log(\rho+1) - \log(\rho-1)} \right\rceil \, \rho^{-2N}
= \frac{2\rho}{(\rho-1)^2} \, (j+1) \, \rho^{-2N}.
\end{align*}
In particular, for  $x \in [{\mathrm e}^k, {\mathrm e}^{k+1})$ and $x':= {\mathrm e}^{-k} x \in [1, {\mathrm e})$
as well as $j':= \left\lfloor \frac{\log(x')}{\log(\rho+1)-\log(\rho-1)} \right\rfloor$  
the error 
\begin{align} \label{errorxp}
|E(x')| =  \textstyle \Big|\log(x) - k -  \sum\limits_{k=1}^N \frac{w_{N,k}^{(L)}}{t_{N,k}^{(L)}} \,  \Big(
 (j'+1) \, {\mathrm e}^{\frac{t_{N,k}^{(L)}}{\rho+1}} -  j' \, {\mathrm e}^{-\frac{t_{N,k}^{(L)}}{\rho-1}} 
 - {\mathrm e}^{t_{N,k}^{(L)} (1 - \frac{x'}{a_{j'}})}  \Big)\Big| < \frac{2\rho\,  \rho^{-2N}}{(\rho-1)^2 \log(\frac{\rho+1}{\rho-1})}  
 \end{align}
 does not depend on the size of $x$.
 \end{corollary}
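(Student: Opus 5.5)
We write $\log(x)=\int_0^{x-1}\frac{1}{t+1}\,{\mathrm d}t$ and note that, with $a=1$, the intervals $[b_i+1,b_{i+1}+1)$ are exactly $\big[\big(\frac{\rho+1}{\rho-1}\big)^i,\big(\frac{\rho+1}{\rho-1}\big)^{i+1}\big)$. This justifies the formula for $j$. Splitting the integral at the points $b_0=0,b_1,\dots,b_j$ gives
\begin{align*}
\log(x)=\sum_{i=0}^{j-1}\int_{b_i}^{b_{i+1}}\frac{{\mathrm d}t}{t+1}+\int_{b_j}^{x-1}\frac{{\mathrm d}t}{t+1}.
\end{align*}
The approximation in (\ref{logerror}) is obtained by replacing $\frac{1}{t+1}$ on each $[b_i,b_{i+1}]$ by the exponential sum of Theorem \ref{theo2b} with index $i$ and integrating exactly.

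First we check that the $i$-th exact integral $\int_{b_i}^{b_{i+1}}$ of that exponential sum equals the $j$-independent quantity in (\ref{intj}). Substituting $t=b_i+\tilde t(b_{i+1}-b_i)$ as in Theorem \ref{theotrans} shows that the integrand depends only on $\tilde t$, up to the prefactor $1/a_i$. The Jacobian $b_{i+1}-b_i$ cancels this prefactor, because $(b_{i+1}-b_i)/a_i$ is independent of $i$. Adding the last partial integral reproduces the bracket in (\ref{logerror}), as already computed in (\ref{logx}). Consequently
\begin{align*}
E(x)=\sum_{i=0}^{j-1}\int_{b_i}^{b_{i+1}} e_i(t)\,{\mathrm d}t+\int_{b_j}^{x-1}e_j(t)\,{\mathrm d}t,
\end{align*}
where $e_i$ is the pointwise error on the $i$-th interval.

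Next we bound each piece by (\ref{approx2}), i.e.\ $|e_i(t)|\le \frac{2}{a_{i-1}}\rho^{-2N}$, times the interval length $b_{i+1}-b_i=\frac{2}{\rho-1}\big(\frac{\rho+1}{\rho-1}\big)^i$. Using $a_{i-1}=\frac{\rho+1}{\rho}\big(\frac{\rho+1}{\rho-1}\big)^{i-1}$ (this also holds for $i=0$), the product is
\begin{align*}
\frac{2\rho}{\rho+1}\Big(\frac{\rho-1}{\rho+1}\Big)^{i-1}\cdot\frac{2}{\rho-1}\Big(\frac{\rho+1}{\rho-1}\Big)^i\rho^{-2N}=\frac{4\rho}{(\rho-1)^2}\rho^{-2N}.
\end{align*}
This misses the claimed constant $\frac{2\rho}{(\rho-1)^2}$ by a factor $2$. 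Fixing this constant is the main obstacle. The crude bound on $c$ in the proof of Theorem \ref{theo2b} used $\frac{1}{x+a}\le\frac{1}{b_i+1}$ and $|1-\frac{a a_i}{x+a}|\le \frac1\rho$ simultaneously, and these are not attained at the same $t$. So I would instead integrate the sharper pointwise bound $\frac{2}{t+1}\frac{1}{1-|u|}|u|^{2N}$ with $u=1-\frac{a_i}{t+1}$, which ranges over $[-\frac1\rho,\frac1\rho]$. The substitution $t+1=a_i/(1-u)$, ${\mathrm d}t/(t+1)={\mathrm d}u/(1-u)$, turns the piece into
\begin{align*}
2\int_{-1/\rho}^{1/\rho}\frac{|u|^{2N}}{(1-u)(1-|u|)}\,{\mathrm d}u,
\end{align*}
which is independent of $i$. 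Bounding this integral by $\frac{2\rho}{(\rho-1)^2}\rho^{-2N}$ is elementary: estimate the denominators by their worst values and use $\int_{-1/\rho}^{1/\rho}|u|^{2N}\,{\mathrm d}u=\frac{2\rho^{-2N-1}}{2N+1}$. Here $N\ge2$ and $\rho>2$ give ample room. If the details resist, I would accept a slightly adjusted constant.

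Summing $j+1$ equal contributions gives the first bound, where $j+1=\lceil\cdot\rceil$ off the endpoints. For the second statement, $x'\in[1,{\mathrm e})$ gives $j'+1\le \frac{1}{\log(\frac{\rho+1}{\rho-1})}+1$. Since $\log x'<1$, we have $j'\le \log(x')/\log(\frac{\rho+1}{\rho-1})$, so the refined per-interval argument bounds $|E(x')|$ essentially by $\log(x')$ divided by the interval log-length, times the per-piece constant. Concretely, for the last partial piece I would bound the error proportionally to its log-length $\int {\mathrm d}t/(t+1)$ rather than by a full piece. Combined with $\log(x)-k=\log(x')$, this yields (\ref{errorxp}).
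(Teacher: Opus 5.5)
Your proof of the first bound is correct, but it reaches the constant by a different route than the paper. The paper uses the self-similarity $t+1\mapsto\big(\frac{\rho+1}{\rho-1}\big)^j(t+1)$ to map every piece, including the partial last one, onto $[0,b_1]$. This gives $|E(x)|\le\frac{2(j+1)}{a_{-1}}\int_0^{b_1}h(t)\,{\mathrm d}t$ with $h(t)=\big(1-\frac{a_0}{t+1}\big)^{2N}$, taken from the middle expression in (\ref{approx2}). The paper then shows that $h$ is convex and overestimates the integral by a four-panel composite trapezoidal rule. Together with an explicit inequality that needs $\rho>2$ and $N\ge2$, this yields $\frac{2\rho}{(\rho-1)^2}\rho^{-2N}$.

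You instead integrate the $t$-dependent constant $\frac{2}{(t+1)(1-|u|)}$ of Theorem \ref{theoer} exactly, via $t+1=a_i/(1-u)$. Bounding $(1-u)(1-|u|)\ge(1-\frac1\rho)^2$ gives, per piece, $\frac{4\rho}{(2N+1)(\rho-1)^2}\,\rho^{-2N}$. This is smaller than the paper's constant by the factor $\frac{2}{2N+1}$, holds for every $\rho>1$ and $N\ge1$, and needs no convexity argument. So nothing "resists" here, and you should record this explicit value, because you need it below.

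Your explanation of the lost factor $2$ is off. Both worst cases, $\frac{1}{t+1}=\frac{1}{b_i+1}$ and $|u|=\frac1\rho$, occur at the same point $t=b_i$. The loss actually comes from replacing $|u|^{2N}$, which equals $\rho^{-2N}$ only at the two endpoints, by $\rho^{-2N}$ on the whole piece. At $x=\big(\frac{\rho+1}{\rho-1}\big)^j$ the ceiling equals $j$, but the partial piece is then empty, so the bound still holds.

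For (\ref{errorxp}) there is a gap. Write $\lambda:=\frac{\rho+1}{\rho-1}$. You rightly notice that $j'+1\le\frac{1}{\log\lambda}+1$ is not enough. The paper's ``follows similarly'' passes over the same point: for $\rho=4$ and $x'\in(\frac53,{\mathrm e})$ one has $j'+1=2>1/\log\frac53\approx1.958$.

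Your proposed remedy, bounding the last partial piece proportionally to its log-length, cannot be obtained from the pointwise bound you use. Measured per unit of $\log(t+1)$, that bound is $\frac{2|u|^{2N}}{1-|u|}$, which equals $\frac{2\rho}{\rho-1}\rho^{-2N}$ at the left end $u=-\frac1\rho$. Proportionality would require at most $\frac{2\rho}{(\rho-1)^2\log\lambda}\rho^{-2N}$, i.e.\ $(\rho-1)\log\lambda\le1$. This fails for every $\rho>2$, since $(\rho-1)\log\lambda$ increases from $\log 3$ to $2$. For $j'=0$ and $x'$ close to $1$ there is no slack from earlier pieces to compensate.

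The fix is to use your sharper per-piece constant and simply count pieces. From $j'\log\lambda\le\log x'<1$ and $\lambda<3$ you get $(j'+1)\log\lambda<1+\log 3$, hence
\[ |E(x')|\le(j'+1)\,\frac{4\rho\,\rho^{-2N}}{(2N+1)(\rho-1)^2}<\frac{4(1+\log 3)}{5}\cdot\frac{\rho\,\rho^{-2N}}{(\rho-1)^2\log\lambda}<\frac{2\rho\,\rho^{-2N}}{(\rho-1)^2\log\lambda}, \]
because $\frac{4(1+\log 3)}{5}\approx1.68$. No adjustment of the constant is needed.
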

 
 \begin{proof}
 Application of Theorem \ref{theo2b} with $a=1$ yields for $x \in [{b}_j+1, {b}_{j+1}+1)$ by (\ref{intj})
 \begin{align*} 
 |E(x)| &=  \textstyle \Big|  j \int\limits_0^{{b}_1} \big(  \frac{1}{t+1} -
 \sum\limits_{k=1}^N \frac{w_{N,k}^{(L)}}{a_0} \,  
 {\mathrm e}^{\frac{t_{N,k}^{(L)}(a_0-t-1)}{a_0}} 
 \big)  {\mathrm d}t  + \int\limits_{{b}_j}^{x-1} 
   \big(  \frac{1}{t+1} -
 \sum\limits_{k=1}^N \frac{w_{N,k}^{(L)}}{a_j} \,  
 {\mathrm e}^{\frac{t_{N,k}^{(L)}(a_j-t-1)}{a_j}} 
 \big)  {\mathrm d}t  \Big| \\
&  \le \textstyle 
 j \int\limits_0^{{b}_1} \frac{2}{a_{-1}} \Big( 1 - \frac{a_0}{t+1} \Big)^{2N}
 {\mathrm d}t  + \int\limits_{{b}_j}^{x-1} 
  \frac{2}{a_{j-1}}\Big( 1 - \frac{a_j}{t+1} \Big)^{2N}
  {\mathrm d}t  
  <  \frac{2(j+1)}{a_{-1}} \int\limits_0^{{b}_1} \Big( 1 - \frac{a_0}{t+1} \Big)^{2N}
 {\mathrm d}t.
  \end{align*}
We observe that the integrand $h(t):= \big( 1 - \frac{a_0}{t+1} \big)^{2N}$ is positive and convex, since $h''(t) >0$, such that we can simply estimate it by the composite trapezoidal rule. 
We compute with $a_0= \frac{\rho+1}{\rho}$ and ${b}_{1}=\frac{2}{\rho-1}$,
\begin{align*}
\textstyle h(0) = \big(\frac{1}{\rho}\big)^{2N}, \; h\big(\frac{{b}_1}{4}\big) = \big(\frac{\rho-2}{\rho(2\rho-1)}\big)^{2N}, \; 
h\big(\frac{{b}_1}{2}\big) = \big(\frac{1}{\rho^2} \big)^{2N}, \;
h\big(\frac{3{b}_1}{4}\big) = \big(\frac{\rho+2}{\rho(2\rho+1)}\big)^{2N}, \; h({b}_1) = \big(\frac{1}{\rho}\big)^{2N},
\end{align*}
such that with 
$a_{-1} = \frac{\rho-1}{\rho}$,
\begin{align*} \textstyle
\frac{2}{a_{-1}} \int\limits_0^{{b}_1}  \Big( 1 - \frac{a_0}{t+1} \Big)^{2N}
{\mathrm d}t &\le \textstyle
 \frac{2}{a_{-1}}  \frac{{b}_1}{8} \big( h(0) + 2 h\big(\frac{{b}_1}{4} \big) + 2h\big(\frac{{b}_1}{2} \big) + 
 2h\big(\frac{3{b}_1}{4}\big) +  h({b}_1) \big) \\
 &= \textstyle \frac{2\rho}{\rho-1} \frac{1}{4(\rho-1)} \big( 2\big(\frac{1}{\rho}\big)^{2N}+ 2\big(\frac{\rho-2}{\rho(2\rho-1)}\big)^{2N} + 2 \big(\frac{1}{\rho^2} \big)^{2N} + 2 \big(\frac{\rho+2}{\rho(2\rho+1)}\big)^{2N} \big) \\
 &= \textstyle \frac{\rho}{(\rho-1)^2} \big( \big(\frac{1}{\rho}\big)^{2N}+ \big(\frac{\rho-2}{\rho(2\rho-1)}\big)^{2N} + \big(\frac{1}{\rho^2} \big)^{2N} +  \big(\frac{\rho+2}{\rho(2\rho+1)}\big)^{2N} \big) \\
 &\le \textstyle  \frac{2\rho}{(\rho-1)^2} \big(\frac{1}{\rho}\big)^{2N},
 \end{align*}
since $\rho >2$, $N \ge 2$, and 
$\big(\frac{\rho-2}{\rho(2\rho-1)}\big)^2 + \big(\frac{1}{\rho^2}\big)^2 + \big(\frac{\rho+2}{\rho(2\rho+1)}\big)^2 <\frac{1}{\rho^2}$.
Hence the assertion follows from 
 $\log x \in [j \log(\frac{\rho+1}{\rho-1}) , (j+1) \log(\frac{\rho+1}{\rho-1}))$. 
 The second estimate follows similarly, where $x$ is replaced by $x'$ and  $j$ is replaced by $j'$.
The two estimates can be further improved using the error estimate (\ref{imp}) 
instead of (\ref{approx2}).
 \end{proof}
 The estimates in Corollary \ref{corolog} are quite conservative. Computational errors 
 for representing $\log(x)$ by an exponential sum 
 are given in Figure \ref{figlog}  for 
 $N=8$ and $\rho=4$ and for $N=10$ and $\rho=4$.
 
  \begin{figure}[htb]
\begin{center}
	\includegraphics[scale=0.35]{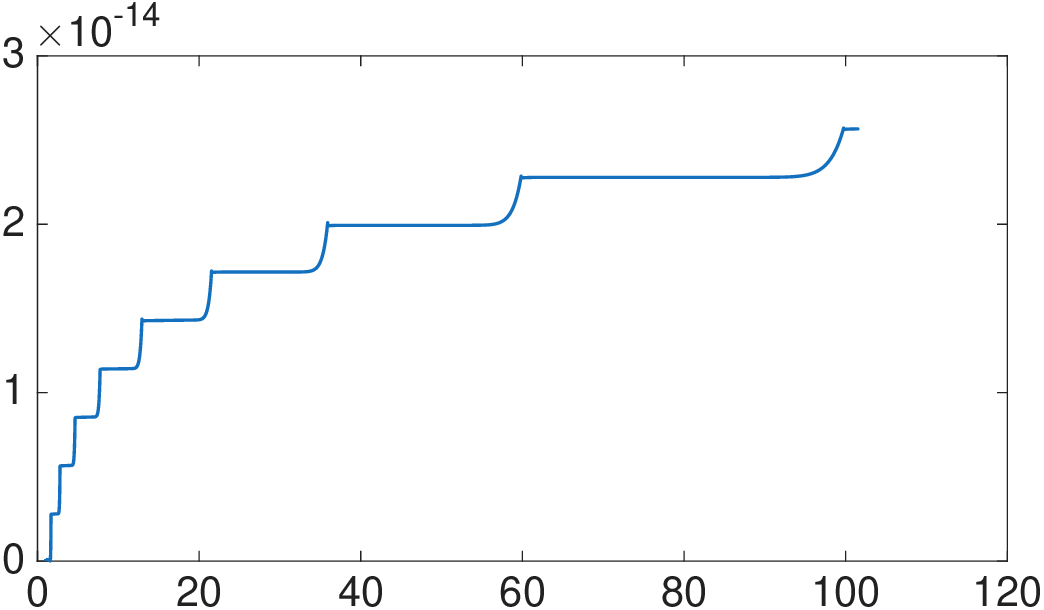} \hspace{5mm}
	\includegraphics[scale=0.35]{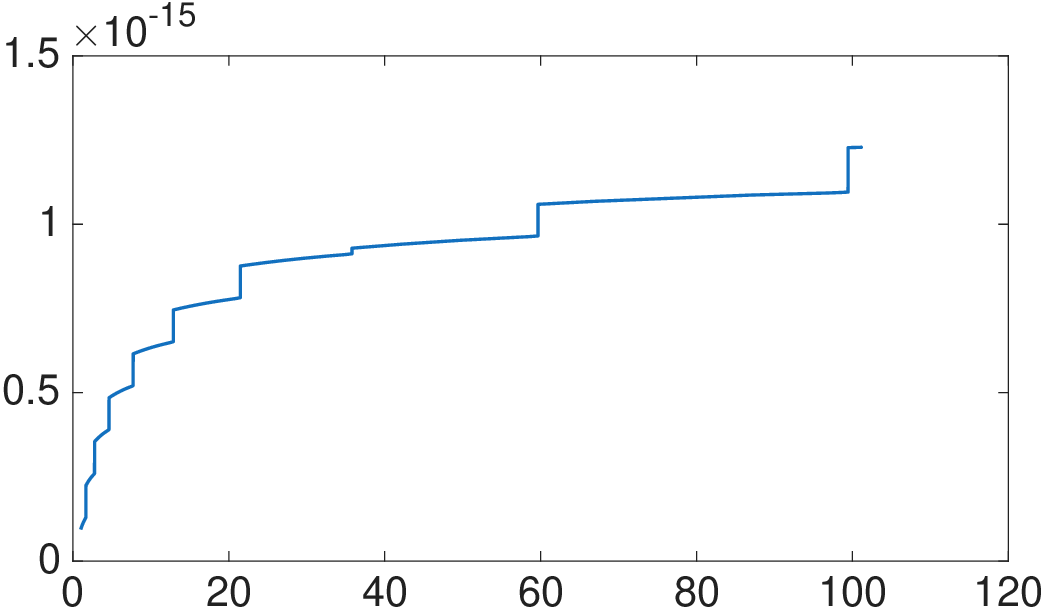} \\
         \includegraphics[scale=0.35]{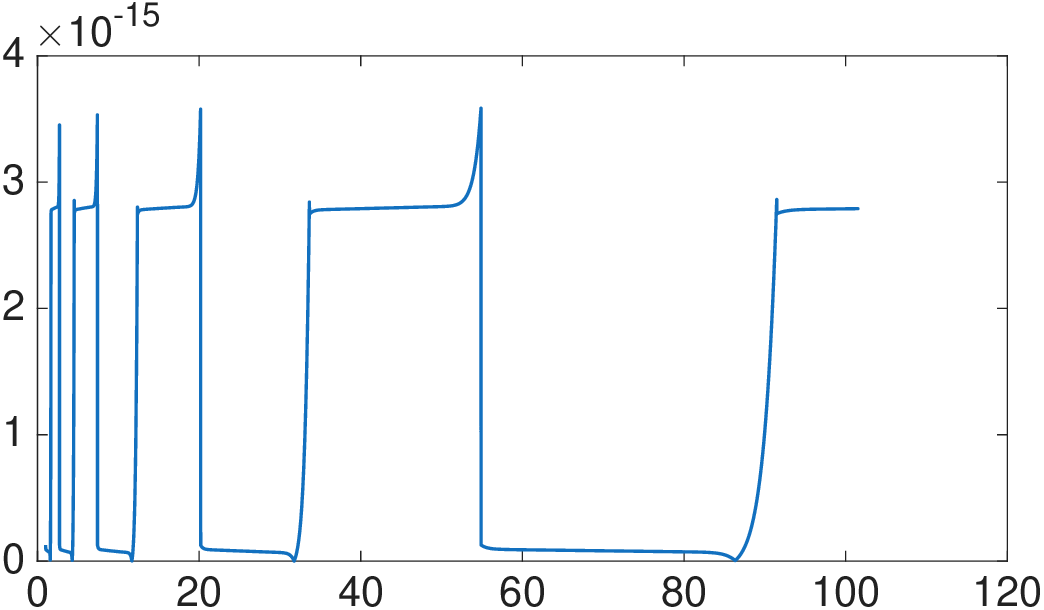} \hspace{5mm}
	\includegraphics[scale=0.35]{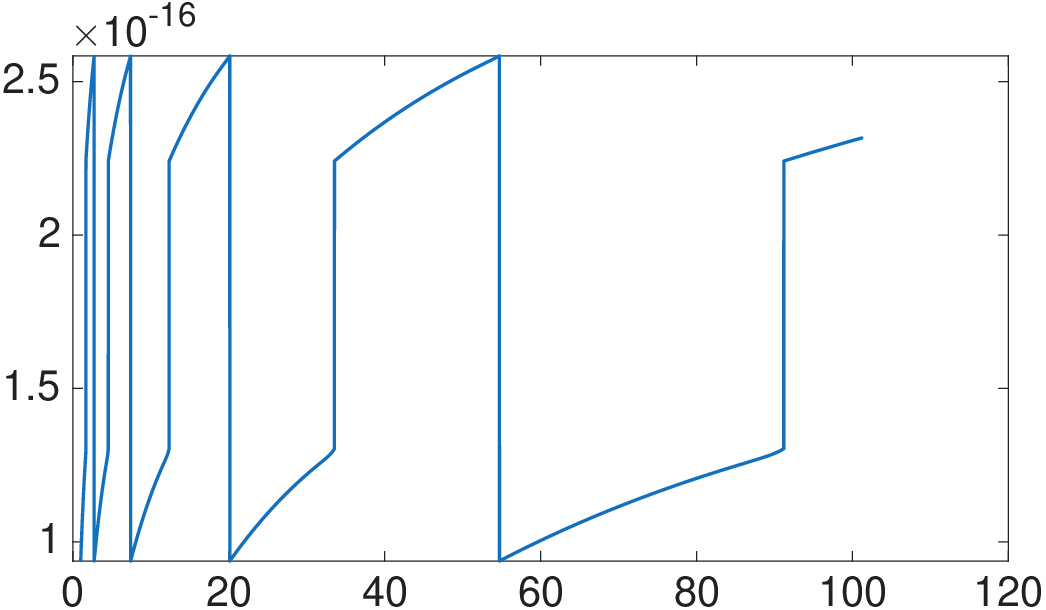} \\
	\end{center}
\captionsetup{aboveskip=0pt, justification=justified, labelfont=small, labelsep=space, labelfont=bf}
\caption{\small
Top: Illustration of the computational error $E(x)$ in (\ref{logerror})
for $N=8$,  $\rho=4$ and $x \in [0,101)$ (left), and for $N=10$,  $\rho=4$ and 
$x \in [0,101)$ (right).
Bottom: Illustration of the computational error $E(x')$, where we use that $\log(x) = k + \log(x')$ and apply the exponential sum representation only for 
 $x'= \frac{x}{{\mathrm e}^k} \in [1,{\mathrm e})$
 for $N=8$, $\rho=4$ 
 (left) and $N=10$, $\rho=4$ (right). 
}
\label{figlog}
\end{figure}

\section{Approximation of ${\mathrm e}^{-x^2/2\sigma}$ by exponential sums}

Next we consider the function $f_2(x)={\mathrm e}^{-x^2/2\sigma}$ with $\sigma>0$. This function admits the representation
\begin{align}\label{rep2} \textstyle {\mathrm e}^{-\frac{x^2}{2\sigma}} =  \frac{1}{\sqrt{2\pi}} \int\limits_{-\infty}^{\infty} {\mathrm e}^{\frac{-s^2}{2}} \, 
{\mathrm e}^{-{\mathrm i} \frac{x}{\sqrt{\sigma}}s}\, {\mathrm d}s
= \frac{1}{\sqrt{\pi}}  \int\limits_{-\infty}^{\infty}  {\mathrm e}^{-u^2} \, \cos\big( \frac{x}{\sqrt{\sigma}} \sqrt{2} u\big)  \, {\mathrm d} u
\end{align}

\subsection{Approximation based on Gauss-Hermite quadrature}

To derive an exponential sum representation, we consider now the Gauss-Hermite quadrature.
The Hermite polynomials $H_n$, $n \in {\mathbb N}_0$, are defined by $H_n(s)= (-1)^n \, {\mathrm e}^{s^2} \, \frac{{\mathrm d}^n}{{\mathrm d} s^n} {\mathrm e}^{-s^2}$ and satisfy the orthogonality condition 
$$ \textstyle \int\limits_{-\infty}^{\infty} H_m(s) \, H_n(s) \, {\mathrm e}^{-s^2}\, {\mathrm d}s = 2^n n! \, \sqrt{\pi}\, \delta_{n,m}. $$
Generally, 
the Gauss-Hermite quadrature rule has the form 
$$ \textstyle \int\limits_{-\infty}^{\infty} g(s) \, {\mathrm e}^{-s^2}\, {\mathrm d}s  = \sum\limits_{k=1}^N w_{N,k}^{(H)} \, g(t_{N,k}^{(H)}) + R_N^{(H)}(g),
$$
 where $t_{N,k}^{(H)}$, $k=1, \ldots, N$, denote the simple real symmetric  roots of the Hermite polynomial $H_N$ of degree $N$
and where the weights $w_{N,k}^{(H)}$ are given by 
$$  \textstyle w_{N,k}^{(H)} := \frac{2^{N-1} \, N! \, \sqrt{\pi}}{N^2 \, [H_{N-1}(t_{N,k}^{(H)})]^2}, 
\qquad k=1, \ldots , N. $$
For real-valued functions $g \in C^{2N}({\mathbb R})$  the quadrature error has the representation 
\begin{align}\label{GHerror} \textstyle R_N^{(H)} (g) = \frac{N! \, \sqrt{\pi}}{2^N(2N)!} g^{(2N)}(\xi), 
\end{align}
where $g^{(2N)}(\xi)$ denotes the $(2N)$-th derivative of $g$ at  a suitable value $\xi \in (-\infty, \infty)$, see 
\cite{Abramowitz}, Section 25.4.46.
We apply this quadrature rule to (\ref{rep2}) for even $N$ and obtain with $g_x(u) := \frac{1}{\sqrt{\pi}}\cos\big(\sqrt{\frac{2}{\sigma}} ux\big)$
\begin{align}\label{quad2}  \textstyle {\mathrm e}^{-\frac{x^2}{2\sigma}} = \sum\limits_{k=1}^N \frac{w_{N,k}^{(H)}}{\sqrt{\pi}} \,  \cos\Big(\sqrt{\frac{2}{\sigma}} t_{N,k}^{(H)} x\Big)
 + R_N^{(H)}(g_x) = \sum\limits_{k=1}^{\frac{N}{2}} \frac{2 w_{N,k}^{(H)}}{\sqrt{\pi}} \,  \cos\Big(\sqrt{\frac{2}{\sigma}} t_{N,k}^{(H)} x\Big)
 + R_N^{(H)}(g_x),
 \end{align}
which provides an approximation of the Gaussian function ${\mathrm e}^{-\frac{x^2}{2\sigma}}$ by an exponential sum of length $N$, or more precisely a cosine sum of length $\frac{N}{2}$, with pointwise error 
\begin{align*}
 \textstyle |R_N^{(H)}(g_x)| =  \frac{N! \, \sqrt{\pi}}{2^N(2N)!} |g_x^{(2N)}(\xi_x)| \le  \frac{N! \, \sqrt{\pi}}{2^N(2N)!} \|g_x^{(2N)}\|_{\infty} = 
 \frac{1}{\sqrt{\pi}}\, \frac{N! \, \sqrt{\pi}}{2^N(2N)!} \Big(\frac{2x^2}{\sigma}\Big)^N =  \frac{N!}{(2N)!} \Big(\frac{x^2}{\sigma}\Big)^N,
\end{align*}
where we have used that 
$|g_x^{(2N)}(s)| 
\le \frac{1}{\sqrt{\pi}}  \Big(\frac{2x^2}{\sigma}\Big)^N$. Applying Stirling's formula, we obtain 
by $\binom{2N}{N} > \frac{2^{2N}}{\sqrt{\frac{\pi}{2}(2N+1)}}$ and $N! >  \sqrt{2\pi N} \, \big(\frac{N}{{\mathrm e}}\big)^N$ that 
\begin{align} \nonumber
 \textstyle |R_N^{(H)}(g_x)|
&\le  \textstyle \frac{(N!)^2}{(2N)!} \frac{1}{N!} \Big(\frac{x^2}{\sigma}\Big)^N \le \frac{\sqrt{\pi(2N+1)/2}}{2^{2N}} \frac{1}{N!} \Big(\frac{x^2}{\sigma}\Big)^N = \frac{\sqrt{\pi(2N+1)/2}}{N!} \Big(\frac{x^2}{4\sigma}\Big)^N\\
\label{error2}
&\le  \textstyle  \frac{\sqrt{\pi(2N+1)/2}}{\sqrt{2\pi N}} \Big(\frac{x^2 {\mathrm e}}{4N\sigma}\Big)^N 
= \sqrt{ \frac{(2N+1)}{4N}} \Big(\frac{x^2 {\mathrm e}}{4N\sigma}\Big)^N.
\end{align}
 Based on this error estimate, we achieve pointwise geometric decay of the exponential sum approximation (\ref{quad2})
 if $ x^2 \, {\mathrm e}< 4N\sigma$, i.e., 
$|x| < 2 \sqrt{\frac{N \sigma}{{\mathrm e}}}$. If a decay rate of $\rho^{-2N}$ is desired for a fixed $\rho>1$, this is achieved when
$ \frac{x^2 {\mathrm e}}{4N\sigma} \le \frac{1}{\rho^{2}}$ i.e., $ \frac{x^2 {\mathrm e} \rho^{2}}{4N\sigma} \le 1$,
or $|x| \le  \frac{2}{\rho} \sqrt{\frac{N\sigma}{{\mathrm e}}}$.

\begin{remark}
Similarly as in Section $2$,  we can also  derive an estimate for the considered smooth function $g_x$ based on its representation as an expansion into Hermite polynomials,
$$ \textstyle g_x(s)=   \frac{{\mathrm e}^{-\frac{x^2}{2\sigma}} }{\sqrt{\pi}} \sum\limits_{n=0}^{\infty} \frac{\Big( \frac{-x^2}{{2\sigma}}\Big)^n}{(2n)!}\, H_{2n}(s), $$
		see \cite{Szego}, formula $(5.5.7)$. 
 Then we obtain 
\begin{align}
			\nonumber
\textstyle  R_N^{(H)}(g_x) &= \textstyle   
\int\limits_{-\infty}^{\infty} g_x(s) \, {\mathrm e}^{-s^2} \, {\mathrm d} s
			-  \sum\limits_{k=1}^N w_{N,k}^{(H)} \, g_x(t_{N,k}^{(H)})  = 
			{\mathrm e}^{-\frac{x^2}{2\sigma}} 
			- \frac{1}{\sqrt{\pi}} \sum\limits_{k=1}^N w_{N,k}^{(H)} \, 
			\cos\big(\sqrt{\frac{2}{\sigma}} x t_{N,k}^{(H)} \big)   \\
			& \textstyle = \textstyle 
			 \frac{-{\mathrm e}^{-\frac{x^2}{2\sigma}} }{\sqrt{\pi}}  \Big( \frac{-x^2}{2\sigma}\Big)^N
			\sum\limits_{n=0}^{\infty} \frac{1}{(2n+2N)!}\, \Big( \frac{-x^2}{2\sigma}\Big)^{n} \, 
			\sum\limits_{k=1}^N w_{N,k}^{(H)} \, H_{2n+2N} (t_{N,k}^{(H)}) .
			\label{absg}
		\end{align}
Suitable estimation of all terms in $(\ref{absg})$ then leads to  a similar estimate as given in 
$(\ref{error2})$.	
\end{remark}

Obviously, for every $x \in {\mathbb R}$ the error $|R_N^{(H)}(g_x)|$ decays geometrically once $N$ is 
taken sufficiently large, while for small $N$ a good error decay may not be achieved.
However, we are interested in good approximations by short exponential sums. 
Therefore, we again propose a procedure that yields slightly adapted approximations on different intervals.

\begin{theorem}\label{Happrox}
For a given $\rho>1$ and given $\sigma>0$, the function $f_2(x)= {\mathrm e}^{-\frac{x^2}{2\sigma}}$ 
can be approximated by an exponential 
sum of arbitrary  even length $N \in 2{\mathbb N}$, $N \ge 2$,  where the error  
\begin{align*}
E(x)&:=
 \textstyle   {\mathrm e}^{-\frac{x^2}{2\sigma}}
-  \frac{2}{\sqrt{\pi}} {\mathrm e}^{\frac{b_j^2}{2\sigma} - \frac{b_j |x|}{\sigma}}
\sum\limits_{k=1}^{ \frac{N}{2} } w_{N,k}^{(H)} \!\cos\big( 
\sqrt{\frac{2}{\sigma}} t_{N,k}^{(H)}(|x|-b_j) \big)
\end{align*}
satisfies  for $|x| \in [b_j, b_{j+1}] $,
\begin{align}\label{approx3}
 |E(x)| &\le  \textstyle  \sqrt{\frac{2N+1}{4N}}  \big(\frac{(|x|-b_j)^2 {\mathrm e}}{4 N \sigma} \big)^N \!  {\mathrm e}^{-\frac{b_j 
(2|x|-b_j)}{2\sigma}} 
\le 
\textstyle  \sqrt{\frac{2N+1}{4N}}  \,  {\mathrm e}^{-\frac{b_j^2}{2\sigma}}\, \rho^{-2N}  ,
\end{align}
where $b_0:=0$ and  $b_j := \frac{2j}{\rho} \, \sqrt{\frac{N\sigma}{{\mathrm e}}}$ for $j\in {\mathbb N}$.
 \end{theorem}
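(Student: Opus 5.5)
The proof rests on one algebraic identity: it shifts the Gaussian so that the Gauss-Hermite estimate (\ref{error2}) is always applied near the origin. By symmetry of $f_2$ and of the approximant in $|x|$, we may assume $x\ge 0$, and we write $x = b_j + y$ with $y := x-b_j \in [0, b_{j+1}-b_j]$. Then
\begin{align*}
\textstyle {\mathrm e}^{-\frac{x^2}{2\sigma}} = {\mathrm e}^{-\frac{(b_j+y)^2}{2\sigma}} = {\mathrm e}^{-\frac{y^2}{2\sigma}}\, {\mathrm e}^{-\frac{b_j^2+2b_j y}{2\sigma}} = {\mathrm e}^{-\frac{y^2}{2\sigma}}\, {\mathrm e}^{\frac{b_j^2}{2\sigma}-\frac{b_j x}{\sigma}},
\end{align*}
using $b_j^2+2b_jy = 2b_jx - b_j^2$. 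The prefactor ${\mathrm e}^{\frac{b_j^2}{2\sigma}-\frac{b_j x}{\sigma}}$ equals exactly the factor appearing in front of the cosine sum in $E(x)$. It is itself an exponential in $x$, so the product with the cosine sum is still an exponential sum of length $N$.

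Next we apply the quadrature (\ref{quad2}) to ${\mathrm e}^{-y^2/2\sigma}$ with $y$ in place of $x$. Symmetry of the nodes and weights (with $N$ even) reduces it to the half sum $\frac{2}{\sqrt{\pi}}\sum_{k=1}^{N/2} w_{N,k}^{(H)}\cos\big(\sqrt{2/\sigma}\, t_{N,k}^{(H)} y\big)$, with remainder $R_N^{(H)}(g_y)$. Multiplying by the prefactor gives
\[
E(x) = {\mathrm e}^{\frac{b_j^2}{2\sigma}-\frac{b_j x}{\sigma}}\, R_N^{(H)}(g_y) = {\mathrm e}^{-\frac{b_j(2x-b_j)}{2\sigma}}\, R_N^{(H)}(g_y).
\]
The estimate (\ref{error2}) for $|R_N^{(H)}(g_y)|$ then yields the first inequality directly.

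For the second inequality we bound the two factors separately. Since $y \le b_{j+1}-b_j = \frac{2}{\rho}\sqrt{N\sigma/{\mathrm e}}$, we get $\frac{y^2{\mathrm e}}{4N\sigma} \le \rho^{-2}$, so the power term is at most $\rho^{-2N}$. Since $x\ge b_j$, we have $2x-b_j \ge b_j$, hence ${\mathrm e}^{-b_j(2x-b_j)/2\sigma} \le {\mathrm e}^{-b_j^2/2\sigma}$.

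No step is a real obstacle. The only points needing care are, first, the sign convention, namely that the exponential prefactor has exponent $-b_j|x|/\sigma$ with $|x|\ge b_j$; and second, checking that the Peano-kernel bound (\ref{GHerror}), and hence (\ref{error2}), applies to the real-valued function $g_y$ with $y$ real, which it does.
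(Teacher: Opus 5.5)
Your proposal is correct and follows essentially the same route as the paper's proof. Both shift to $x' = |x|-b_j \in [0,b_1]$ and factor ${\mathrm e}^{-\frac{x^2}{2\sigma}} = {\mathrm e}^{-\frac{b_j(2|x|-b_j)}{2\sigma}}\,{\mathrm e}^{-\frac{x'^2}{2\sigma}}$, apply the Gauss--Hermite estimate (\ref{error2}) to the shifted Gaussian, and then use $x'\le b_1$ and $2|x|-b_j\ge b_j$ to obtain the second inequality.
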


\begin{proof} Let $|x| \in [b_j, b_{j+1}]$ and $x':=|x|-b_j \in [0, b_1] = 
[0, \frac{2}{\rho} \, \sqrt{\frac{N\sigma}{{\mathrm e}}}]$.
Then we obtain from (\ref{rep2}) and (\ref{quad2}) and with 
$g_{x'}(u)= \frac{1}{\sqrt{\pi}}\cos\big(\sqrt{\frac{2}{\sigma}} ux'\big)$
\begin{align*}
{\mathrm e}^{-\frac{x^2}{2\sigma}} &=  \textstyle {\mathrm e}^{-\frac{(x'+b_j)^2}{2\sigma}} 
=  {\mathrm e}^{-\frac{b_j (2x'+b_j)}{2\sigma}} \, {\mathrm e}^{-\frac{x'^2}{2\sigma}} 
= {\mathrm e}^{-\frac{b_j (2x'+b_j)}{2\sigma}}  \,  \frac{1}{\sqrt{\pi}}  \int\limits_{-\infty}^{\infty}  {\mathrm e}^{-u^2} \, \cos\big(\sqrt{\frac{2}{\sigma}} ux'\big) \, {\mathrm d} u \\
&= \textstyle  {\mathrm e}^{-\frac{b_j (2x'+b_j)}{2\sigma}} 
 \Big( \sum\limits_{k=1}^{\frac{N}{2}} \frac{2w_{N,k}^{(H)}}{\sqrt{\pi}} \, \cos\big(\sqrt{\frac{2}{\sigma}} t_{N,k}^{(H)} x'\big)   + R_N^{(H)}(g_{x'}) \Big) \\
 & = \textstyle    \frac{2}{\sqrt{\pi}} {\mathrm e}^{ - \frac{b_j(2 |x|-b_j)}{2\sigma}}
\sum\limits_{k=1}^{ N/2 } \!\! w_{N,k}^{(H)} \!\cos\big( 
\sqrt{\frac{2}{\sigma}}  t_{N,k}^{(H)}(b_j - |x|) \big) +  {\mathrm e}^{-\frac{b_j (2|x|- b_j)}{2\sigma}} \, R_N^{(H)}(g_{x'}).
\end{align*}
For the error $ {\mathrm e}^{-\frac{b_j (2x'+b_j)}{2\sigma}} \, R_N^{(H)}(g_{x'})  $ 
we conclude from (\ref{error2}) with $b_1= \frac{2}{\rho} \, \sqrt{\frac{N\sigma}{{\mathrm e}}}$ the estimate
\begin{align*}|  \textstyle  {\mathrm e}^{-\frac{b_j (2x'+b_j)}{2\sigma}} \, R_N^{(H)}(g_{x'}) | &\le   \textstyle \sqrt{\frac{2N+1}{4N}} \, \big(\frac{x'^2 {\mathrm e}}{4 N \sigma} \big)^N \,  {\mathrm e}^{-\frac{b_j (2x'+b_j)}{2\sigma}}
\le \textstyle \sqrt{\frac{2N+1}{4N}} \, \big(\frac{b_1^2 {\mathrm e}}{4 N \sigma} \big)^N \,  {\mathrm e}^{-\frac{b_j^2}{2\sigma}} \\
&=  \textstyle \sqrt{\frac{2N+1}{4N}} \, \big(\frac{4 N \sigma}{ 4 N \sigma \rho^2} \big)^N \,  {\mathrm e}^{-\frac{b_j^2}{2\sigma}}
= \sqrt{\frac{2N+1}{4N}}  \,  {\mathrm e}^{-\frac{b_j^2}{2\sigma}}\, \rho^{-2N}.
\end{align*}
In particular, by $x':=|x|-b_j$, 
$$ |  \textstyle  {\mathrm e}^{-\frac{b_j (2x'+b_j)}{2\sigma}} \, R_N^{(H)}(g_{x'}) | \le  
\textstyle \sqrt{\frac{2N+1}{4N}} \, \big(\frac{(|x|-b_j)^2 {\mathrm e}}{4 N \sigma} \big)^N \,  {\mathrm e}^{-\frac{b_j 
(2|x|-b_j)}{2\sigma}}. $$
Thus, formula (\ref{approx3}) follows. The error decay is even stronger than $\rho^{-2N}$ for  $j>0$, namely
$ {\mathrm e}^{-\frac{b_j^2}{2\sigma}}\, \rho^{-2N} = \big({\mathrm e}^{\frac{j^2}{\rho^2 {\mathrm e}}} \, \rho\big)^{-2N}$.
\end{proof}

\subsection{Computational effort and numerical experiments to approximate ${\mathrm e}^{-x^2/2\sigma}$}

Again we can show that the approximations of ${\mathrm e}^{-\frac{x^2}{2\sigma}}$ at different intervals are very closely related, so that 
we obtain (up to a scaling) the same parameters for the exponential sum approximation if a suitable transformation to the inspected interval is applied. Moreover, the same relative error occurs at every interval.

\begin{theorem}
Let  $\rho>1$, $N \in 2{\mathbb N}$, $N\ge 2$,  and  $\sigma>0$ be given and let 
$b_j:=\frac{2j}{\rho} \, \sqrt{\frac{N\sigma}{{\mathrm e}}}$ for $j \in {\mathbb N}_0$.
Then, for $|x| \in [b_j, b_{j+1}]$ and $\tilde{x} := \frac{|x| - b_j}{b_{j+1}-b_j} \in [0, 1]$ 
the exponential sum approximation of  ${\mathrm e}^{-x^2/2\sigma}$ is of the form 
\begin{align*} \textstyle 
 \frac{2 {\mathrm e}^{\frac{b_j^2}{2\sigma} - \frac{b_j |x|}{\sigma}}}{\sqrt{\pi}} 
\sum\limits_{k=1}^{\frac{N}{2}} \!\! w_{N,k}^{(H)} \!\cos\big( 
\sqrt{\frac{2}{\sigma}}  t_{N,k}^{(H)}( |x|-b_j) \big)
= \textstyle  \frac{2{\mathrm e}^{-\frac{b_j}{2\sigma} (b_j+ 2b_1 \tilde{x})}}{\sqrt{\pi}}
\sum\limits_{k=1}^{\frac{N}{2}} \!\! w_{N,k}^{(H)}  \cos\big( 
\sqrt{\frac{2}{\sigma}}  t_{N,k}^{(H)}b_1\tilde{x} \big).
\end{align*}
In particular, the relative error 
\begin{align*} \textstyle 
 \frac{{\mathrm e}^{-\frac{x^2}{2\sigma}} -  \frac{2 {\mathrm e}^{\frac{b_j^2}{2\sigma} - \frac{b_j |x|}{\sigma}}}{\sqrt{\pi}} 
\sum\limits_{k=1}^{\frac{N}{2}} \!\! w_{N,k}^{(H)} \!\cos\big( 
\sqrt{\frac{2}{\sigma}}  t_{N,k}^{(H)}(|x|-b_j) \big)}{{\mathrm e}^{-\frac{x^2}{2\sigma}}}
=  \textstyle  1- \frac{2}{\sqrt{\pi}} \displaystyle  {\mathrm e}^{\frac{2N \tilde{x}^2}{{\mathrm e} \rho^2}} 
\textstyle \sum\limits_{k=1}^{\frac{N}{2}}  w_{N,k}^{(H)} \cos\big(\sqrt{\frac{2}{\sigma}} t_{N,k}^{(H)}b_1 \tilde{x}\big)
\end{align*}
does not  depend on the interval $[b_j, b_{j+1}]$ and is bounded by $\sqrt{\frac{2N+1}{4N}} {\mathrm e}^{\frac{b_1^2}{2\sigma}} \, \rho^{-2N}$.
\end{theorem}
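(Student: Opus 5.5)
The proof is a direct substitution in the approximation of Theorem \ref{Happrox}, followed by a rewriting of the error bound (\ref{approx3}) in relative form.

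\textbf{Rewriting the approximation.} For $|x|\in[b_j,b_{j+1}]$ we have $b_{j+1}-b_j=b_1$, so $|x|-b_j=b_1\tilde{x}$ and the cosine arguments become $\sqrt{2/\sigma}\,t_{N,k}^{(H)}b_1\tilde{x}$. For the prefactor we write $|x|=b_j+b_1\tilde{x}$ and obtain
\[
\tfrac{b_j^2}{2\sigma}-\tfrac{b_j|x|}{\sigma}=\tfrac{b_j^2-2b_j^2-2b_jb_1\tilde{x}}{2\sigma}=-\tfrac{b_j}{2\sigma}\,(b_j+2b_1\tilde{x}),
\]
which is the claimed form of the sum.

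\textbf{Relative error.} We divide the approximation by ${\mathrm e}^{-x^2/2\sigma}$. Since
\[
\tfrac{x^2}{2\sigma}=\tfrac{(b_j+b_1\tilde{x})^2}{2\sigma}=\tfrac{b_j^2+2b_jb_1\tilde{x}+b_1^2\tilde{x}^2}{2\sigma},
\]
the ratio of the exponential factors is ${\mathrm e}^{b_1^2\tilde{x}^2/(2\sigma)}$, and all $j$-dependence cancels. With $b_1^2=\frac{4N\sigma}{{\mathrm e}\rho^2}$ this factor equals ${\mathrm e}^{2N\tilde{x}^2/({\mathrm e}\rho^2)}$. This yields the stated formula, which depends only on $\tilde{x}$ and not on $j$.

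\textbf{Bound.} The first inequality in (\ref{approx3}) bounds the absolute error by
\[
\sqrt{\tfrac{2N+1}{4N}}\Big(\tfrac{(|x|-b_j)^2{\mathrm e}}{4N\sigma}\Big)^N{\mathrm e}^{-\frac{b_j(2|x|-b_j)}{2\sigma}}.
\]
Dividing by ${\mathrm e}^{-x^2/2\sigma}$ turns the exponential into ${\mathrm e}^{(x^2-2b_j|x|+b_j^2)/(2\sigma)}={\mathrm e}^{(|x|-b_j)^2/(2\sigma)}={\mathrm e}^{b_1^2\tilde{x}^2/(2\sigma)}\le{\mathrm e}^{b_1^2/(2\sigma)}$. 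Likewise
\[
\tfrac{(|x|-b_j)^2{\mathrm e}}{4N\sigma}=\tfrac{b_1^2\tilde{x}^2{\mathrm e}}{4N\sigma}=\tilde{x}^2\rho^{-2}\le\rho^{-2},
\]
so the relative error is at most $\sqrt{\frac{2N+1}{4N}}\,{\mathrm e}^{b_1^2/(2\sigma)}\rho^{-2N}$.

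There is no real obstacle. The only point needing care is to use the first, $x$-dependent inequality of (\ref{approx3}) rather than the uniform bound ${\mathrm e}^{-b_j^2/2\sigma}\rho^{-2N}$. After division by ${\mathrm e}^{-x^2/2\sigma}$ the uniform bound would leave a factor ${\mathrm e}^{(x^2-b_j^2)/(2\sigma)}$ that grows with $j$, whereas the pointwise bound leaves only the $j$-independent factor ${\mathrm e}^{b_1^2\tilde{x}^2/(2\sigma)}$.
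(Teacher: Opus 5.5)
Your proposal is correct and follows the paper's proof essentially step by step. You substitute $|x|=b_j+b_1\tilde{x}$, note that multiplying by ${\mathrm e}^{x^2/2\sigma}$ leaves only ${\mathrm e}^{(|x|-b_j)^2/2\sigma}={\mathrm e}^{2N\tilde{x}^2/({\mathrm e}\rho^2)}$, and bound the relative error using the $x$-dependent factor ${\mathrm e}^{-b_j(2|x|-b_j)/2\sigma}$ from Theorem~\ref{Happrox} rather than the uniform ${\mathrm e}^{-b_j^2/2\sigma}$; the only cosmetic differences are that you keep $\tilde{x}^{2N}\rho^{-2N}$ one step longer and work with $|x|$ directly instead of reducing to $x\ge 0$ by evenness.
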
 

\begin{proof} Since ${\mathrm e}^{-\frac{x^2}{2\sigma}} $ is even, we only consider $x \in [0, \infty)$.
For $x=b_j + \tilde{x}(b_{j+1}-b_j) =b_j + \tilde{x}b_1 \ge0$ with $\tilde{x} \in [0,1]$  and 
$b_{j+1}-b_j =
\frac{2}{\rho} \, \sqrt{\frac{N\sigma}{{\mathrm e}}} = b_1$ we find $x-b_j = b_1 \tilde{x}$. Then the exponential sum approximation takes the form  
\begin{align*} \textstyle 
 \frac{2 {\mathrm e}^{\frac{b_j^2}{2\sigma} - \frac{b_j |x|}{\sigma}}}{\sqrt{\pi}} 
\sum\limits_{k=1}^{\frac{N}{2}} \!\! w_{N,k}^{(H)} \!\cos\big( 
\sqrt{\frac{2}{\sigma}}  t_{N,k}^{(H)}( |x|-b_j) \big)
& \textstyle = \frac{2}{\sqrt{\pi}}  {\mathrm e}^{-\frac{b_j}{2\sigma} (b_j+ 2b_1 \tilde{x})} 
\sum\limits_{k=1}^{\frac{N}{2}} \!\! w_{N,k}^{(H)} \cos(\sqrt{\frac{2}{\sigma}} t_{N,k}^{(H)} b_1 \tilde{x}).
\end{align*}
For the relative error it follows that 
\begin{align*}
\textstyle 
1-{\mathrm e}^{\frac{x^2}{2\sigma}}  \frac{2 {\mathrm e}^{\frac{b_j^2}{2\sigma} - \frac{b_j |x|}{\sigma}}}{\sqrt{\pi}} 
\sum\limits_{k=1}^{\frac{N}{2}} \!\! w_{N,k}^{(H)} \!\cos\big( 
\sqrt{\frac{2}{\sigma}}  t_{N,k}^{(H)}( |x|-b_j) \big)
=  1-\frac{2}{\sqrt{\pi}}  {\mathrm e}^{\frac{(x-b_j)^2}{2\sigma}} 
\sum\limits_{k=1}^{\frac{N}{2}} \!\! w_{N,k}^{(H)} \cos(\sqrt{\frac{2}{\sigma}}  t_{N,k}^{(H)} b_1 \tilde{x}),
\end{align*}
where $\frac{x^2}{2\sigma}+\frac{b_j^2}{2\sigma} - \frac{b_j x}{\sigma}= \frac{(x-b_j)^2}{2\sigma} = \frac{b_1^2\tilde{x}^2}{2\sigma} = \frac{2N \tilde{x}^2}{{\mathrm e} \rho^2}$. 
For  the relative error bound we conclude from  (\ref{approx3}) 
that 
\begin{align*}
e^{x^2/2\sigma} \, |E(x)| &\le \textstyle {\mathrm e}^{\frac{x^2}{2\sigma}} \sqrt{\frac{2N+1}{4N}} \, 
{\mathrm e}^{\frac{-b_j(2|x|-b_j)}{2\sigma}}\, \rho^{-2N}
=  \sqrt{\frac{2N+1}{4N}} \, {\mathrm e}^{\frac{(x-b_j)^2}{2\sigma}}\, \rho^{-2N} \le 
 \sqrt{\frac{2N+1}{4N}} \, {\mathrm e}^{\frac{b_1^2}{2\sigma}}\, \rho^{-2N}.
\end{align*}
\end{proof}

 \begin{figure}[htb]
\begin{center}
	\includegraphics[scale=0.35]{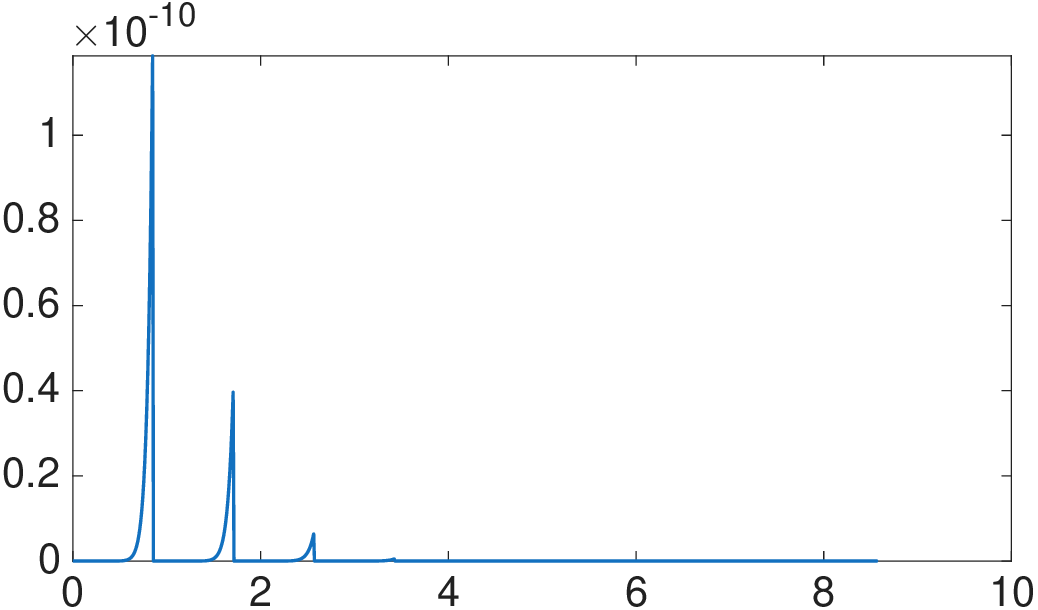} 
	\includegraphics[scale=0.35]{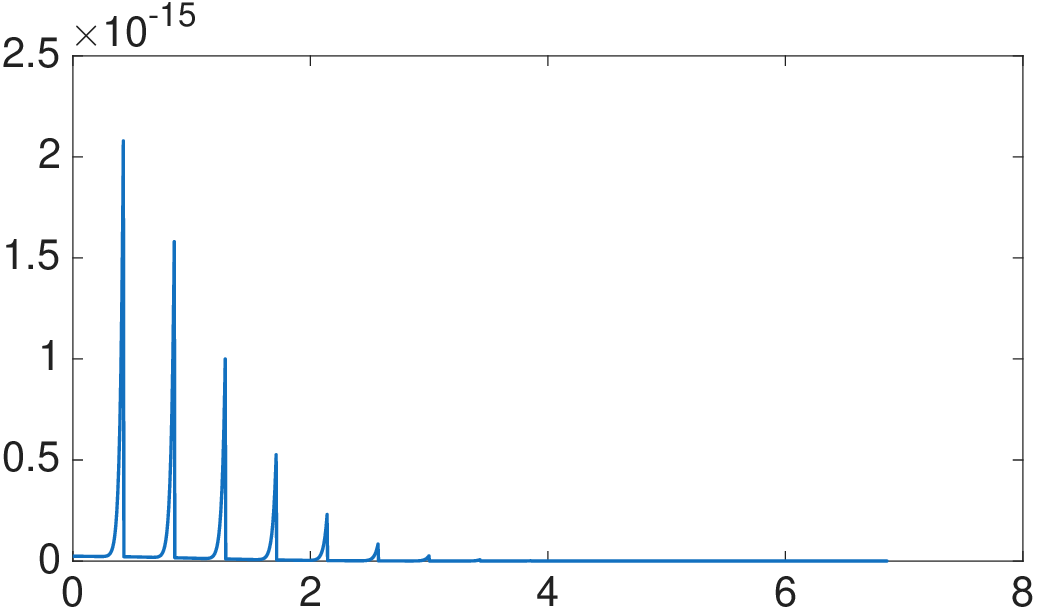} \\
		\includegraphics[scale=0.35]{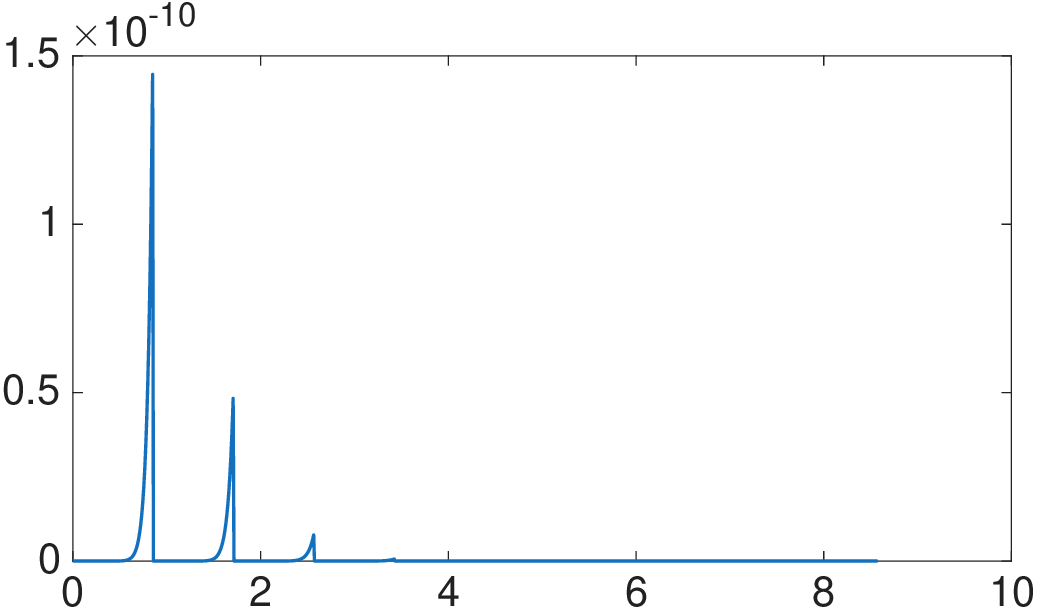}
	\includegraphics[scale=0.35]{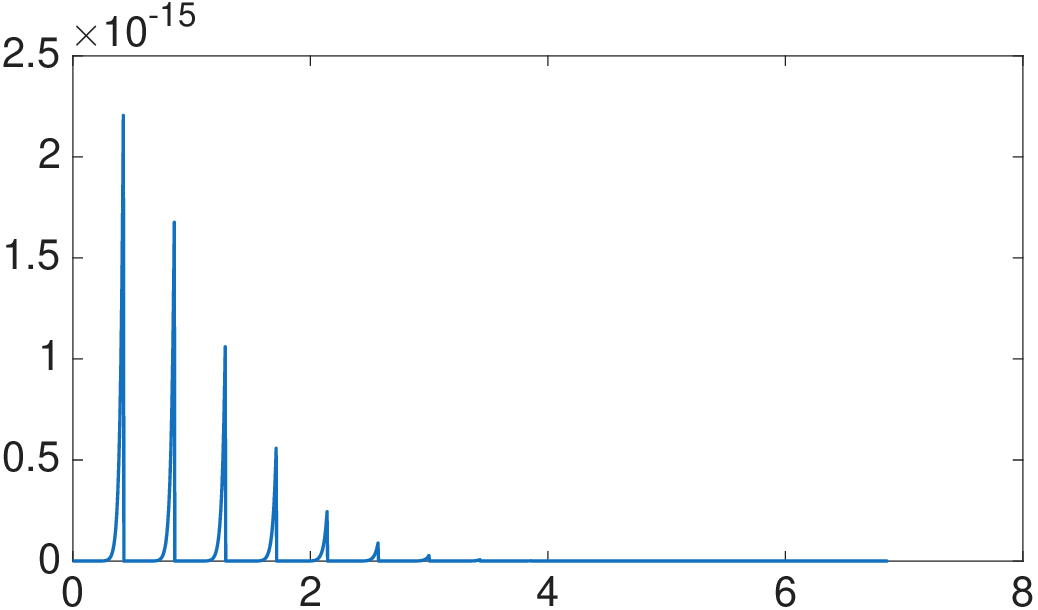}\\
	\includegraphics[scale=0.35]{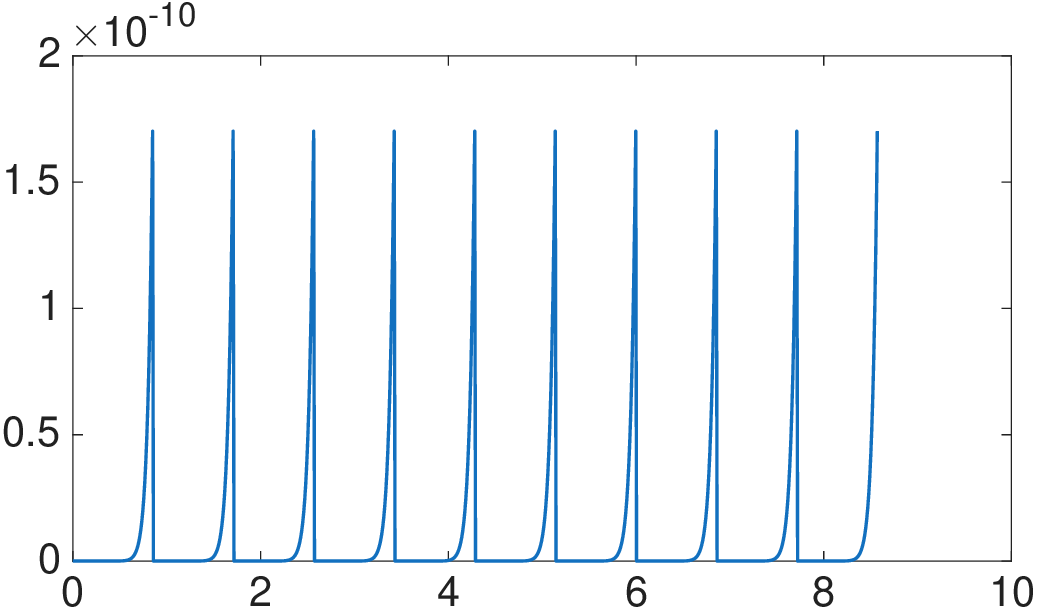} 
         \includegraphics[scale=0.35]{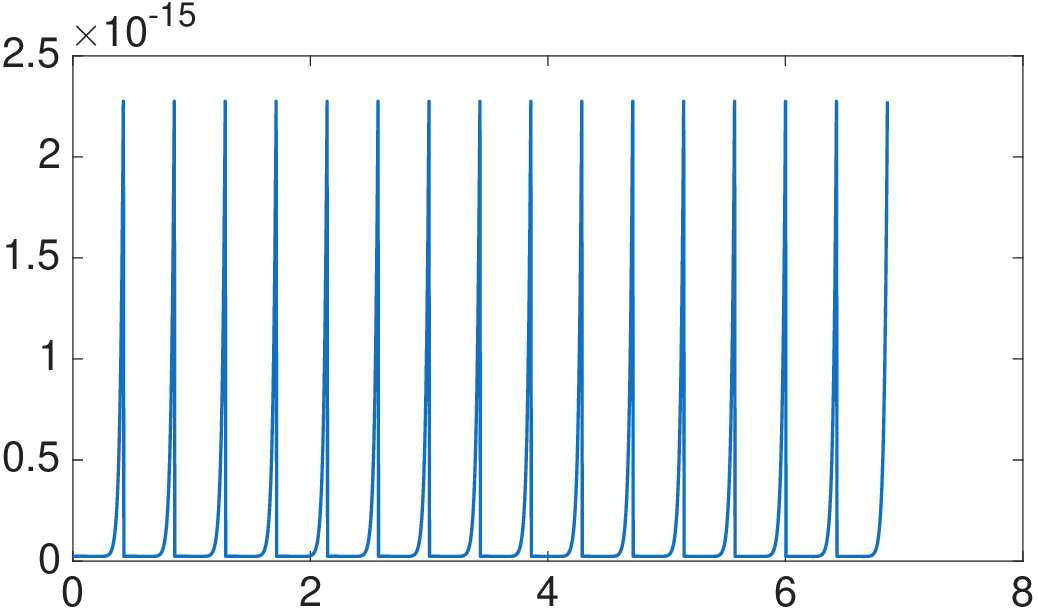} \\
\end{center}
\captionsetup{aboveskip=0pt, justification=justified, labelfont=small, labelsep=space, labelfont=bf}
\caption{\small
Top: Computational error of $\big|{\mathrm e}^{-\frac{x^2}{2}} - 
 \frac{2 {\mathrm e}^{\frac{b_j^2}{2\sigma} - \frac{b_j |x|}{\sigma}}}{\sqrt{\pi}} 
\sum\limits_{k=1}^{\frac{N}{2}} \!\! w_{N,k}^{(H)} \!\cos\big( 
\sqrt{\frac{2}{\sigma}}  t_{N,k}^{(H)}( |x|-b_j) \big)\big|$
for $N=8$,  $\rho=4$ and $x \in [0,8.57)$ (left column), and for $N=8$,  $\rho=8$ and $x \in [0,6.86)$ (right column).
Top: absolute errors for $\rho=4$ (left) and $\rho = 8$ (right). 
Middle:  error  bounds according to (\ref{approx3})  for $\rho=4$ (left) and $\rho=8$ (right).
Bottom: relative errors for $\rho=4$ (left) and $\rho = 8$ (right). 
}
\label{fig3}
\end{figure}

 \begin{table}[htb]
\scriptsize
\caption{\small Nodes and weights for Gauss-Hermite quadrature for $N=8$.}
\begin{center}
\begin{tabular}{|c|l|l|}
\hline
$k$ & nodes $t_{8,k}^{(H)}$ &   weights $w_{8,k}^{(H)}$ \\
\hline
1 &  $\pm$ 2.93063742025724401922350270524360 & 0.000199604072211367619206090452544096456225712638 \\
2 &  $\pm$ 1.98165675669584292585463063976930 & 0.017077983007413475456203056436445678180510424856 \\
3 &  $\pm$ 1.15719371244678019472076577906310  &0.207802325814891879543258620285700557593576026897 \\
4 &  $\pm$ 0.38118699020732211685471888558369 & 0.661147012558241291030415974495882259168462563670 \\
\hline
\end{tabular}
\end{center}
\label{tab3}
\end{table}

\begin{example}
We approximate ${\mathrm e}^{-\frac{x^2}{2\sigma}}$ for $\sigma = 1$
employing an exponential sum of length $N=8$. Note that ${\mathrm e}^{-\frac{x^2}{2\sigma}}$ is even and already smaller than $1.2665 \cdot 10^{-14}$ for $|x| \ge 8$ and smaller than $2.5768\cdot 10^{-18}$ for $x\ge 9$. Working in double-precision arithmetic, we only need to consider intervals $[0, b]$ with $b < 9$.
 Applying Theorem $\ref{Happrox}$  for  $\rho= 4$,  we choose
$b_j=  \sqrt{\frac{2}{{\mathrm e}}} j \approx 0.8578  \cdot j$ and obtain in
 this case from $(\ref{approx3})$ 
\begin{align*}  \textstyle \Big| {\mathrm e}^{-\frac{x^2}{2}} - 
\frac{2}{\sqrt{\pi}} {\mathrm e}^{j\big(\frac{j}{{\mathrm e}} - 
\sqrt{\frac{2}{{\mathrm e}}} |x| \big)}
\sum\limits_{k=1}^4 w_{N,k}^{(H)} \, 
\cos\big( t_{N,k}^{(H)}\big(\sqrt{2} |x| -\frac{2j}{\sqrt{\mathrm e}}\big)\big) \Big|
&\le \textstyle  \sqrt{\frac{17}{32}}  
\frac{(|x|- \sqrt{\frac{2}{{\mathrm e}}}j)^{16}}{2^{40}} \,  {\mathrm e}^8\, 
{\mathrm e}^{-\sqrt{\frac{2}{{\mathrm e}}}j|x|+ \frac{j^2}{{\mathrm e}}} \\
& \le \textstyle 
\sqrt{\frac{17}{32}} \,  {\mathrm e}^{-\frac{j^2}{{\mathrm e}}} \, 4^{-16}
\end{align*}
for all 
$|x| \in  \big[ b_j, \,b_{j+1} \big]$.
The computed absolute error and relative error 
are illustrated in Figure $\ref{fig3}$ (left column) for $10$ intervals, i.e., for $x \in [0, 8.57]$.  Furthermore, we provide the theoretical error bound $ \sqrt{\frac{17}{32}}  \frac{(|x|- \sqrt{\frac{2}{{\mathrm e}}}j)^{16}}{2^{40}} \, {\mathrm e}^{-\sqrt{\frac{2}{{\mathrm e}}}j|x|+ \frac{j^2}{{\mathrm e}}+8}$ from $(\ref{approx3})$
(left column, bottom).
Similarly, we show the numerical results for $N=8$, $\rho=8$ and $b_j= \frac{j}{\sqrt{2{\mathrm e}}}$, where we use $16$ intervals. The computed absolute error, relative error, and error bound  
are illustrated in Figure $\ref{fig3}$ (right column) for $x \in [0, 6.86]$.
The nodes and weights for the Gauss-Hermite quadrature for $N=8$ are given in Table 
\ref{tab3} with at least $32$ exact digits.
\end{example}

\begin{example}
Next, we approximate ${\mathrm e}^{-\frac{x^2}{2\sigma}}$ for $\sigma = 1$
employing an exponential sum of length $N=10$. 
 Applying Theorem $\ref{Happrox}$ for $\rho= 4$, we choose
$b_j=  \frac{j}{2} \sqrt{\frac{10}{{\mathrm e}}} \approx 0.9590  \cdot j$.
The computed absolute error and relative error  
are illustrated in Figure $\ref{fig4}$ (left column) for $10$ intervals, i.e., for $x \in [0, 9.59]$.  
Furthermore, we provide the error bound as given in $(\ref{approx3})$
(left column, bottom).

Similarly, we show the numerical results for $N=10$, 
$\rho=6$ and $b_j= \frac{2j}{6}  \sqrt{\frac{10}{{\mathrm e}}} \approx 0.6393 \, j$,
where we use $12$ intervals. The computed absolute error, relative error, and error bound 
are illustrated in Figure $\ref{fig4}$ (right column) for $x \in [0, 7.67]$.
Nodes and weights for the Gauss-Hermite quadrature with at least $32$ exact digits are given in Table \ref{tab4}.

 \begin{table}[tbh]
\scriptsize
\caption{\small Nodes and weights for Gauss-Hermite quadrature for $N=10$.}
\begin{center}
\begin{tabular}{|c|l|l|}
\hline
$k$ & nodes $t_{10,k}^{(H)}$ &   weights $w_{10,k}^{(H)}$ \\
\hline
1 & $\pm$ 3.43615911883773760332672549431912  &
0.000007640432855232620629159367859595222 
\\
2 & $\pm$ 2.53273167423278979640896079775479  &
0.001343645746781232692201565585845913870  
\\
3 & $\pm$  1.75668364929988177345140122010616 &
0.033874394455481063136164731277585973698 
\\
4 &  $\pm$  1.03661082978951365417749191675921  &
0.240138611082314686416523295005861395370 \\
5 &  $\pm$  0.34290132722370460878916502555726  &
0.610862633735325798783564990433419713239  
\\
\hline
\end{tabular}
\end{center}
\label{tab4}
\end{table}
\end{example}

\begin{figure}[bht]
\begin{center}
	\includegraphics[scale=0.35]{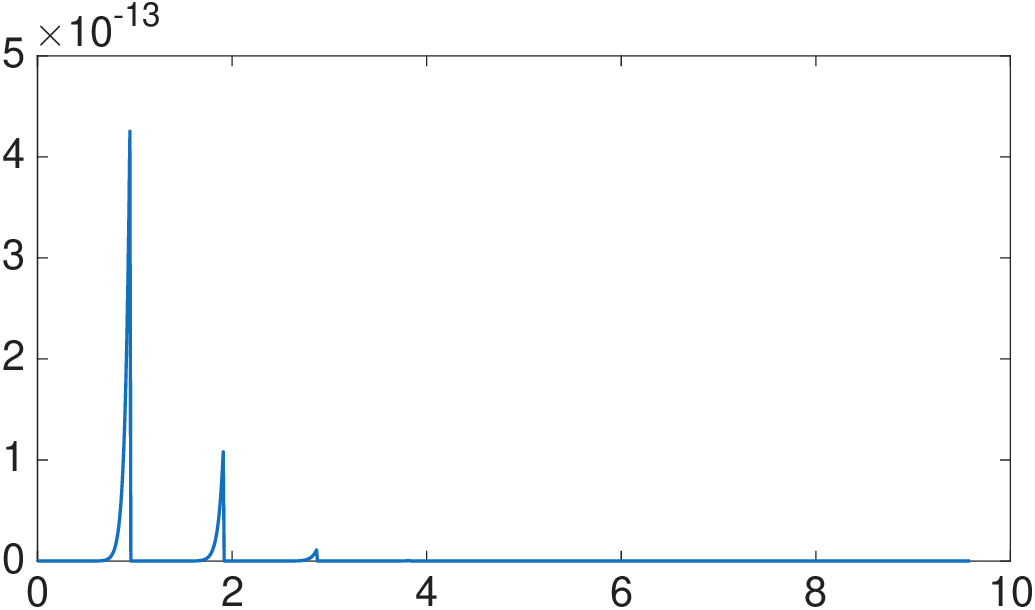} 
	\includegraphics[scale=0.35]{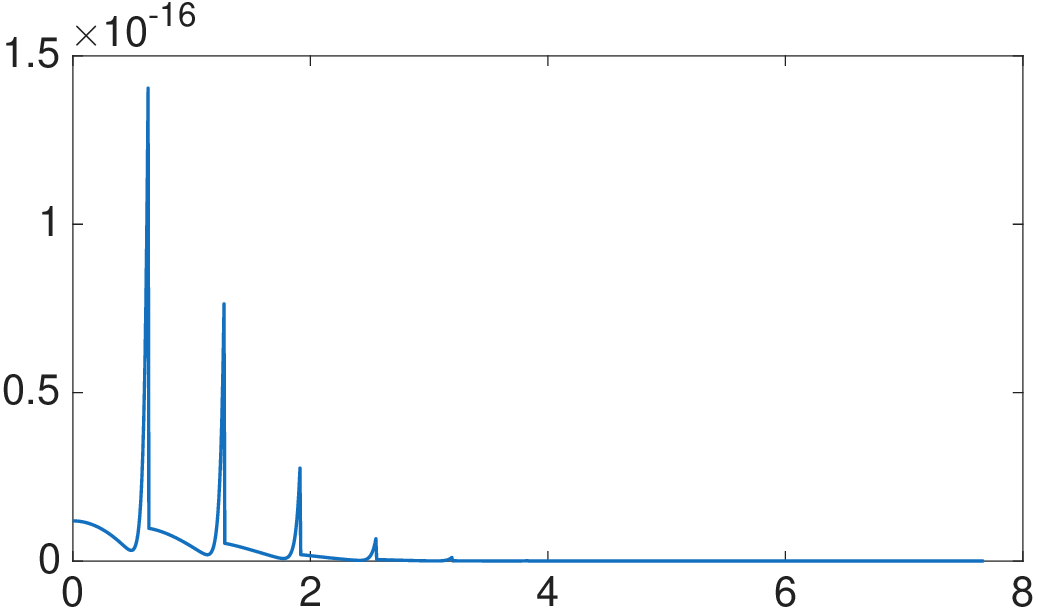} \\
	\includegraphics[scale=0.35]{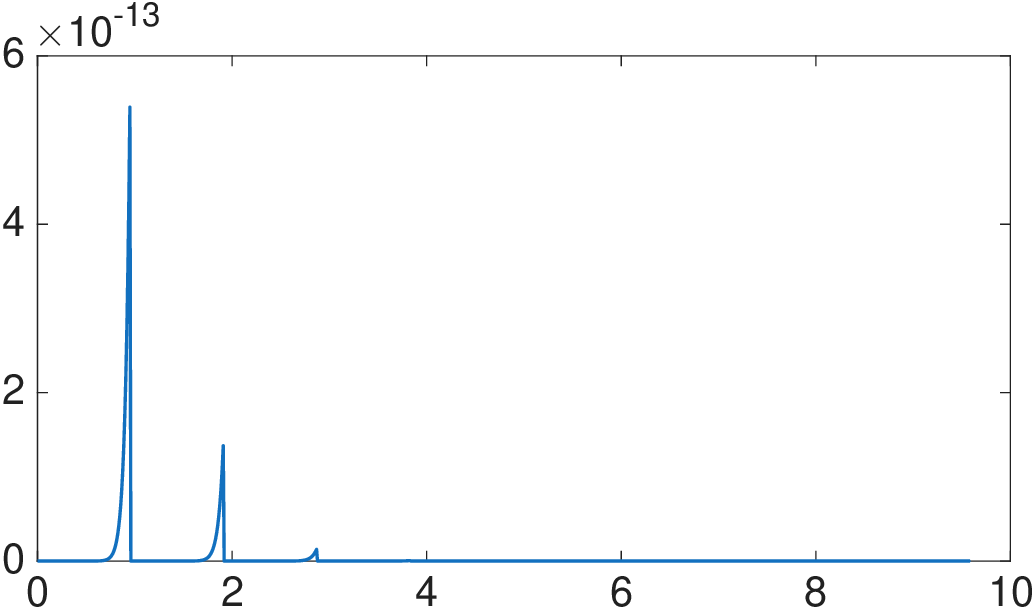}
	\includegraphics[scale=0.35]{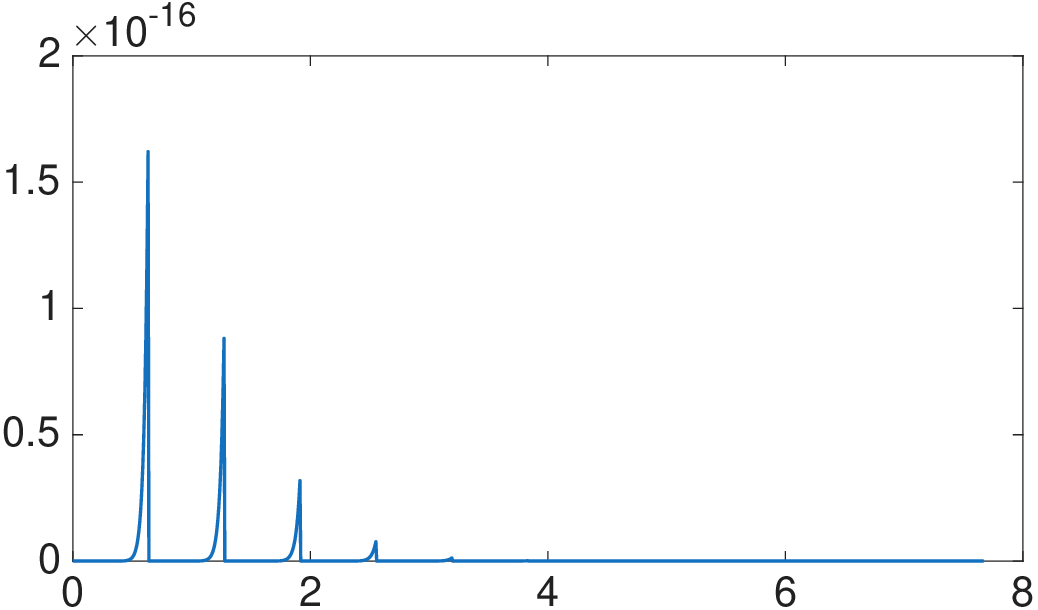}\\
		\includegraphics[scale=0.35]{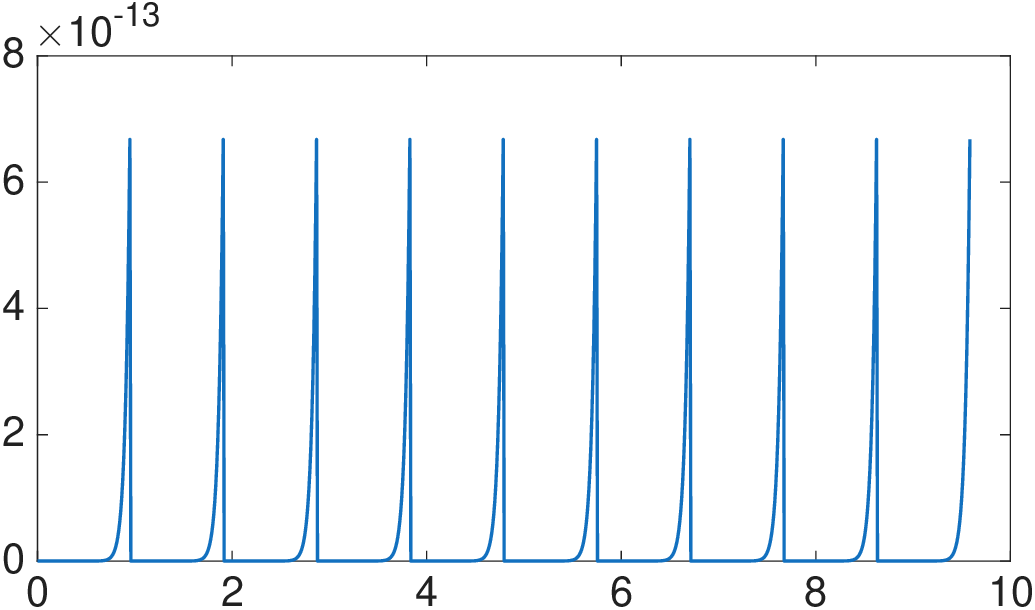} 
         \includegraphics[scale=0.35]{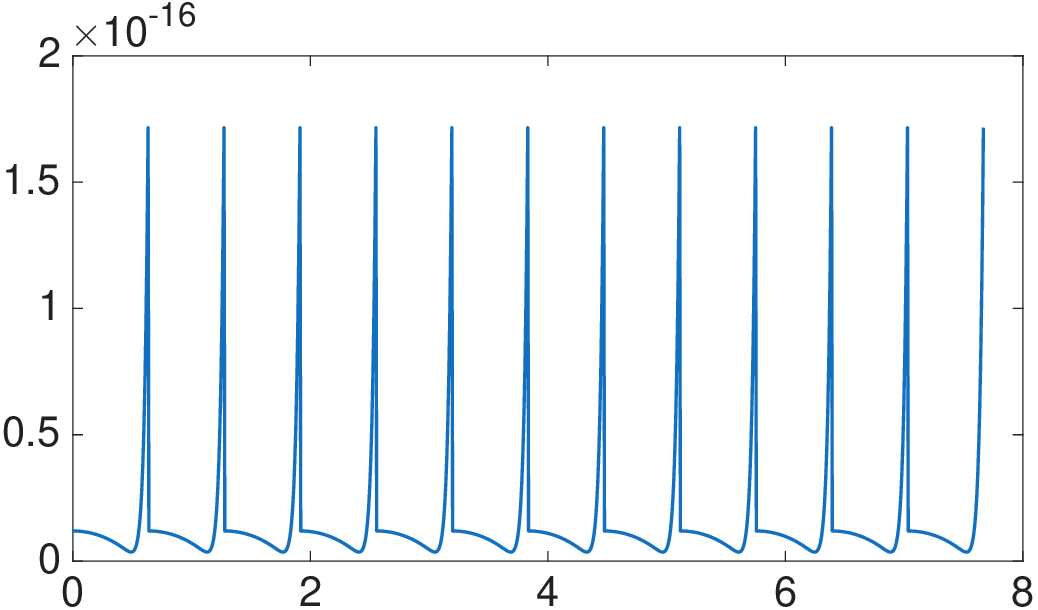} 
	\end{center}
\captionsetup{aboveskip=0pt, justification=justified, labelfont=small, labelsep=space, labelfont=bf}
\caption{\small
Top: Computational error of $\big|{\mathrm e}^{-\frac{x^2}{2}} -
 \frac{2 {\mathrm e}^{\frac{b_j^2}{2\sigma} - \frac{b_j |x|}{\sigma}}}{\sqrt{\pi}} 
\sum\limits_{k=1}^{\frac{N}{2}} \!\! w_{N,k}^{(H)} \!\cos\big( 
\sqrt{\frac{2}{\sigma}}  t_{N,k}^{(H)}( |x|-b_j) \big)\big|$
for $N=10$,  $\rho=4$ and $x \in [0,9.59)$, 10  intervals (left column), and for $N=10$,  $\rho=6$, and 
$x \in [0,7.67)$
12 intervals (right column).
Top: absolute errors for $\rho=4$ (left) and $\rho = 6$ (right). 
Middle: error  bounds according to (\ref{approx3})  for $\rho=4$ (left) and $\rho=6$ (right).
Bottom: relative errors for $\rho=4$ (left) and $\rho = 6$ (right). 
}
\label{fig4}
\end{figure}

\subsection{Application to compute the error function with high precision}

Our approximation of ${\mathrm e}^{-\frac{x^2}{2\sigma}}$ by short exponential sums can now be directly applied to 
compute the error function 
$$ \textstyle \mathrm{erf}(x) = \frac{2}{\sqrt{\pi}} \int\limits_0^x {\mathrm e}^{-t^2} \, {\mathrm d} t $$
with high precision.
For this purpose, we first note that the approximation of ${\mathrm e}^{-\frac{x^2}{2\sigma}}$ given in Theorem \ref{Happrox} also holds for intervals of length smaller than $b_1 = \frac{2}{\rho} \sqrt{\frac{N\sigma}{{\mathrm e}}}$.

We set $\sigma= \frac{1}{2}$ and $N$ even. To compute $ \mathrm{erf}(x)$, we let $j:= \lceil \frac{x}{b_1}\rceil$ and choose 
$b:= x/j \le b_1$. We take $b$ as the interval length, so that ${\mathrm e}^{-{x^2}}$ is approximated on 
intervals $[j b, (j+1)b]$, $j \in {\mathbb N}_0$.
Then we find from Theorem \ref{Happrox}
\begin{align*}
\mathrm{erf}(x)  &= \textstyle \frac{2}{\sqrt{\pi}} \sum\limits_{\ell=0}^{j-1} \int\limits_{\ell b}^{(\ell+1)b} {\mathrm e}^{-t^2} {\mathrm d} t 
\approx 
\textstyle \frac{2}{\sqrt{\pi}} \sum\limits_{\ell=0}^{j-1} \int\limits_{\ell b}^{(\ell+1)b} \frac{2}{\sqrt{\pi}} 
{\mathrm e}^{(\ell b)^2- 2\ell b t}
\sum\limits_{k=1}^{\frac{N}{2}} \!\! w_{N,k}^{(H)} \!\cos\big( 
2 t_{N,k}^{(H)}(t- \ell b) \big)  {\mathrm d}t \\
&= 
\textstyle \frac{4}{\pi} \sum\limits_{\ell=0}^{j-1}  \sum\limits_{k=1}^{\frac{N}{2}} w_{N,k}^{(H)}  
{\mathrm e}^{-(\ell b)^2} \int\limits_{\ell b}^{(\ell+1)b} 
{\mathrm e}^{2\ell b ( \ell b- t)} \cos\big(2 t_{N,k}^{(H)}(t- \ell b) \big)  {\mathrm d}t \\
&= \textstyle \frac{4}{\pi} \sum\limits_{\ell=0}^{j-1}  \sum\limits_{k=1}^{\frac{N}{2}}  w_{N,k}^{(H)}  
{\mathrm e}^{-(\ell b)^2} \int\limits_{0}^{b} 
{\mathrm e}^{-2\ell b t'} \cos\big(2 t_{N,k}^{(H)}t' \big)  {\mathrm d}t'\\
&= \textstyle \frac{2}{\pi} \sum\limits_{\ell=0}^{j-1}  \sum\limits_{k=1}^{\frac{N}{2}}  w_{N,k}^{(H)}  
  \frac{{\mathrm e}^{-(\ell b)^2}}{(\ell^2 b^2 +  (t_{N,k}^{(H)})^2)} \Big(\ell b +{\mathrm e}^{-2\ell b^2}
\big(-\ell b\cos(2 t_{N,k}^{(H)} b) +  t_{N,k}^{(H)} \sin(2 t_{N,k}^{(H)}b) \big)  \Big).
\end{align*}
Since $b=\frac{x}{j}$, this representation can itself be viewed as an approximation of $\mathrm{erf}(x)$ by an exponential sum (here with rational coefficients and Gaussian terms).
For the error of this approximation, we can show
\begin{corollary}
Assume that we have approximated ${\mathrm e}^{-x^2}$ by an exponential sum of length 
$N$ with $\rho>1$ as given in Theorem $\ref{Happrox}$. Set $j:= \lceil \frac{x}{b_1}\rceil$ 
with $b_1=\frac{2}{\rho} \sqrt{\frac{N}{2{\mathrm e}}}$, and $b:=\frac{x}{j} \le b_1$, i.e., $x=jb$.
Then the  error  
$$ \textstyle E(x) = \mathrm {erf}(x) - \frac{2}{\pi} \sum\limits_{\ell=0}^{j-1}  
\sum\limits_{k=1}^{\frac{N}{2}}  w_{N,k}^{(H)}  
  \frac{{\mathrm e}^{-(\ell b)^2}}{(\ell^2 b^2 +  (t_{N,k}^{(H)})^2)} \Big({\mathrm e}^{-2\ell b^2}
\big(-\ell b\cos(2 t_{N,k}^{(H)} b) +  t_{N,k}^{(H)} \sin(2 t_{N,k}^{(H)}b) \big) + \ell b \Big)$$ 
satisfies 
\begin{align}\label{erf} \textstyle |E(x)|  \le  \frac{1}{\sqrt{2{\mathrm e} (2N+1)}}  
\big(1-{\mathrm e}^{-\frac{N}{2\rho^2{\mathrm e}}}\big)^{-1}
\, \rho^{-(2N+1)}
\end{align}
for all $j \in {\mathbb N}$. 
\end{corollary}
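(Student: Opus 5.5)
The error $E(x)$ is the sum over $\ell=0,\dots,j-1$ of the integrals over $[\ell b,(\ell+1)b]$ of the pointwise error of Theorem~\ref{Happrox}, multiplied by $\frac{2}{\sqrt{\pi}}$. Here Theorem~\ref{Happrox} is applied with $\sigma=\frac12$ and interval endpoints $\ell b$ in place of $b_\ell$. As remarked before the corollary, the proof of Theorem~\ref{Happrox} only uses that $x'=|x|-\ell b\in[0,b]$ with $b\le b_1$. So for $t\in[\ell b,(\ell+1)b]$, with $t'=t-\ell b\in[0,b]$, estimate (\ref{error2}) gives
\[
\Big|{\mathrm e}^{-t^2}-\text{(approximant)}\Big|\le \sqrt{\tfrac{2N+1}{4N}}\Big(\tfrac{t'^2{\mathrm e}}{2N}\Big)^N {\mathrm e}^{-\ell b(2t'+\ell b)} .
\]
The plan is to integrate this bound exactly in $t'$ instead of first bounding $t'\le b_1$. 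That is the source of the extra factor $\rho^{-1}$ and of $(2N+1)^{-1}$ in (\ref{erf}).

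The first step is to bound ${\mathrm e}^{-\ell b(2t'+\ell b)}\le {\mathrm e}^{-\ell^2b^2}$. The remaining integral is
\[
\int_0^b t'^{2N}\,{\mathrm d}t'=\frac{b^{2N+1}}{2N+1}\le \frac{b_1^{2N+1}}{2N+1},
\]
with $b_1^2{\mathrm e}/(2N)=\rho^{-2}$. Hence each interval contributes at most
\[
\frac{2}{\sqrt\pi}\sqrt{\tfrac{2N+1}{4N}}\,\frac{b_1}{2N+1}\,\rho^{-2N}{\mathrm e}^{-\ell^2b^2}.
\]
Inserting $b_1=\frac{2}{\rho}\sqrt{\frac{N}{2{\mathrm e}}}$ gives the prefactor $\frac{2}{\sqrt{\pi}}\cdot\frac{1}{\sqrt{2{\mathrm e}(2N+1)}}\rho^{-(2N+1)}$. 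Its $N$-dependence matches (\ref{erf}), but there is a leftover constant $2/\sqrt{\pi}>1$. I will therefore be more careful, either by keeping the factor $\int_0^b{\mathrm e}^{-2\ell b t'}$ or by using the sharper constant in the Stirling step. If the stated constant really requires dropping $2/\sqrt\pi$, I will check whether the paper's normalization of the approximant already absorbs it. This is a bookkeeping point, not a structural one.

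The second step is to sum over $\ell$. Using ${\mathrm e}^{-\ell^2b^2}\le {\mathrm e}^{-\ell b^2}$ would not help, because $b$ may be small. Instead I will use $\ell^2\ge \ell$ and try to bound by a geometric series with ratio ${\mathrm e}^{-N/(2\rho^2{\mathrm e})}$. Since $b_1^2=\frac{2N}{\rho^2{\mathrm e}}$, this ratio is ${\mathrm e}^{-b_1^2/4}$. The term ${\mathrm e}^{-\ell^2b^2}$ involves $b$ rather than $b_1$, and $b=x/j$ can be as small as about $b_1/2$ when $j\ge2$, since $j=\lceil x/b_1\rceil$ gives $b>b_1(j-1)/j\ge b_1/2$. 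Then $b^2\ge b_1^2/4=\frac{N}{2\rho^2{\mathrm e}}$, so ${\mathrm e}^{-\ell^2b^2}\le {\mathrm e}^{-\ell N/(2\rho^2{\mathrm e})}$. Summing over $\ell\ge0$ gives exactly $\big(1-{\mathrm e}^{-\frac{N}{2\rho^2{\mathrm e}}}\big)^{-1}$. The case $j=1$ is trivial because there is only one term.

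The main obstacle is this interplay between the unknown actual step size $b$ and the decay of the sum. The key observation is the lower bound $b\ge b_1/2$, which follows from the ceiling definition of $j$, and I expect that it is precisely what produces the constant $N/(2\rho^2{\mathrm e})$. After that, combining the per-interval bound with the geometric sum yields (\ref{erf}), uniformly in $j$.
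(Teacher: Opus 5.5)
Your argument follows the paper's proof step for step. You apply the pointwise bound of Theorem~\ref{Happrox} with $\sigma=\frac12$ and $\ell b$ in place of $b_\ell$, and drop ${\mathrm e}^{-2\ell b t'}$. You use $\int_0^b t'^{2N}\,{\mathrm d}t'\le b_1^{2N+1}/(2N+1)$. You then combine ${\mathrm e}^{-\ell^2 b^2}\le {\mathrm e}^{-\ell b^2}$ with $b>\frac{j-1}{j}b_1\ge \frac{b_1}{2}$, i.e.\ $b^2\ge \frac{N}{2\rho^2{\mathrm e}}$. Your remark that ${\mathrm e}^{-\ell^2b^2}\le{\mathrm e}^{-\ell b^2}$ ``would not help'' contradicts what you then do: it is exactly the paper's step, and it works once combined with $b\ge b_1/2$.

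The point you leave open is a genuine gap, and none of your proposed repairs closes it.
\begin{itemize}
\item The coefficient $\frac{2}{\pi}$ in $E(x)$ is exactly $\frac{2}{\sqrt\pi}\cdot\frac{2}{\sqrt\pi}\cdot\frac12$. These are the normalisation of $\mathrm{erf}$, the prefactor of the approximant in Theorem~\ref{Happrox}, and the value of $\int_0^b{\mathrm e}^{-2\ell bt}\cos(2t_{N,k}^{(H)}t)\,{\mathrm d}t$. So $E(x)$ is precisely $\frac{2}{\sqrt\pi}$ times the summed interval errors for $\int_0^x{\mathrm e}^{-t^2}\,{\mathrm d}t$, and no normalisation absorbs the factor.
\item Keeping ${\mathrm e}^{-2\ell bt'}$ does nothing for $\ell=0$, where $\int_0^b t'^{2N}\,{\mathrm d}t'$ is already used exactly.
\item The Stirling step in (\ref{error2}) is asymptotically sharp. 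Compared with $\frac{N!}{(2N)!}(2t^2)^N$ it loses only a factor $r_N\approx 1-\frac{5}{24N}$ (about $0.975$ at $N=8$), not $\frac{\sqrt\pi}{2}\approx 0.886$.
\end{itemize}

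The paper's proof does not resolve this either. It bounds the interval errors of $\int{\mathrm e}^{-t^2}\,{\mathrm d}t$, sums them, and states the result as a bound for $E(x)$, silently dropping $\frac{2}{\sqrt\pi}$. So what your argument (and the paper's) actually proves is (\ref{erf}) with an extra factor $\frac{2}{\sqrt\pi}$.

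The stated constant can only be reached by spending slack in the summation step. Put $c:=\frac{N}{2\rho^2{\mathrm e}}$. Then $\frac{2}{\sqrt\pi}\sum_{\ell\ge0}{\mathrm e}^{-\ell^2c}\le \frac{2}{\sqrt\pi}+\frac{1}{\sqrt c}$ and $(1-{\mathrm e}^{-c})^{-1}\ge \frac1c+\frac12$. Together these give (\ref{erf}) for all $j$ whenever $c\le 0.48$. This covers the paper's examples, where $c\approx 0.092$ and $c\approx 0.023$.

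It cannot be obtained uniformly from Theorem~\ref{Happrox}, however. For $j=1$ and $x=b_1$, even the sharpest form of this chain yields $\frac{2}{\sqrt\pi}\,r_N\,(1-{\mathrm e}^{-c})$ times the right-hand side of (\ref{erf}). That factor exceeds $1$ for large $c$, e.g.\ $N=100$, $\rho=2$, $c\approx 4.6$.
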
 
\begin{proof} Our setting implies that  $b \le b_1 = \frac{1}{\rho} \sqrt{\frac{2N}{{\mathrm e}}}$ and $b > \frac{j-1}{j} b_1$.
Then (\ref{approx3}) yields  for $0 \le \ell \le j-1$
\begin{align*} & \textstyle    \Big| \int\limits_{\ell b}^{(\ell+1)b} {\mathrm e}^{-t^2} \, {\mathrm d}t 
- \int\limits_{\ell b}^{(\ell+1)b} \frac{2}{\sqrt{\pi}} 
{\mathrm e}^{(\ell b)^2- 2\ell b t}
\sum\limits_{k=1}^{\frac{N}{2}} \!\! w_{N,k}^{(H)} \!\cos\big( 
2 t_{N,k}^{(H)}(t- \ell b) \big)  {\mathrm d}t  \Big| \\
& \le  \textstyle 
\sqrt{\frac{2N+1}{4N}}   \int\limits_{\ell b}^{(\ell+1)b} \big(\frac{(t-\ell b)^2 
{\mathrm e}}{2N}\big)^N {\mathrm e}^{-2\ell b(t - \ell b) - (\ell b)^2} {\mathrm d} t
= \sqrt{\frac{2N+1}{4N}}  \big(\frac{{\mathrm e}}{2N}\big)^N {\mathrm e}^{-(\ell b)^2} \int\limits_{0}^{b} t^{2N} {\mathrm e}^{-2\ell bt} {\mathrm d} t
\\
& \le  \textstyle \sqrt{\frac{2N+1}{4N}}  \big(\frac{{\mathrm e}}{2N}\big)^N {\mathrm e}^{-(\ell b)^2} \frac{b^{2N+1}}{(2N+1)}
\le \frac{1}{(2N+1)} \sqrt{\frac{2N+1}{4N}}  \big(\frac{{\mathrm e}}{2N}\big)^N {\mathrm e}^{-(\ell b)^2} (\frac{1}{\rho} \sqrt{\frac{2N}{{\mathrm e}}})^{2N+1}\\
&=  \textstyle  \frac{1}{\sqrt{2{\mathrm e} (2N+1)}} 
{\mathrm e}^{-(\ell b)^2} \rho^{-(2N+1)}.
\end{align*}
For $x=j b$  with $j >1$, summation over $\ell$ yields, using 
$b > \frac{j-1}{j} b_1 = \frac{j-1}{j \rho} \sqrt{\frac{2N}{{\mathrm e}}} \geq \frac{1}{2 \rho} \sqrt{\frac{2N}{{\mathrm e}}}$ and
$$ \textstyle\sum\limits_{\ell=0}^{j-1} {\mathrm e}^{-b^2\ell^2} \leq
\sum\limits_{\ell=0}^{j-1} {\mathrm e}^{-b^2\ell} < \frac{1}{1-{\mathrm e}^{-b^2}}
\le \frac{1}{1-{\mathrm e}^{-\frac{(j-1)^2}{j^2\rho^2} \frac{2N}{{\mathrm e}}}}\le \frac{1}{1-{\mathrm e}^{-\frac{N}{2\rho^2{\mathrm e}}}}
$$
the error estimate (\ref{erf}), which is independent of the number of intervals $j$.
\end{proof}

\begin{example}
Taking for example $N=8$ and $\rho=4$, we obtain $b \le 0.6065$ and the estimate $(\ref{erf})$ yields
$$  \textstyle |E(x)| < \frac{1}{\sqrt{34 {\mathrm e}}} \big(1-{\mathrm e}^{-\frac{1}{4 {\mathrm e}}}\big)^{-1}  4^{-17} =  6.8907 \cdot 10^{-11}$$
 and for $N=8$, $\rho=8$, we obtain $b \le 0.3033$ and 
 $$  \textstyle |E(x)| <  \frac{1}{\sqrt{34 {\mathrm e}}} \big(1-{\mathrm e}^{-\frac{1}{16 {\mathrm e}}}\big)^{-1} 8^{-17} =  2.0323 \cdot 10^{-15}. $$
\end{example}

  \begin{figure}[htb]
\begin{center}
	\includegraphics[scale=0.35]{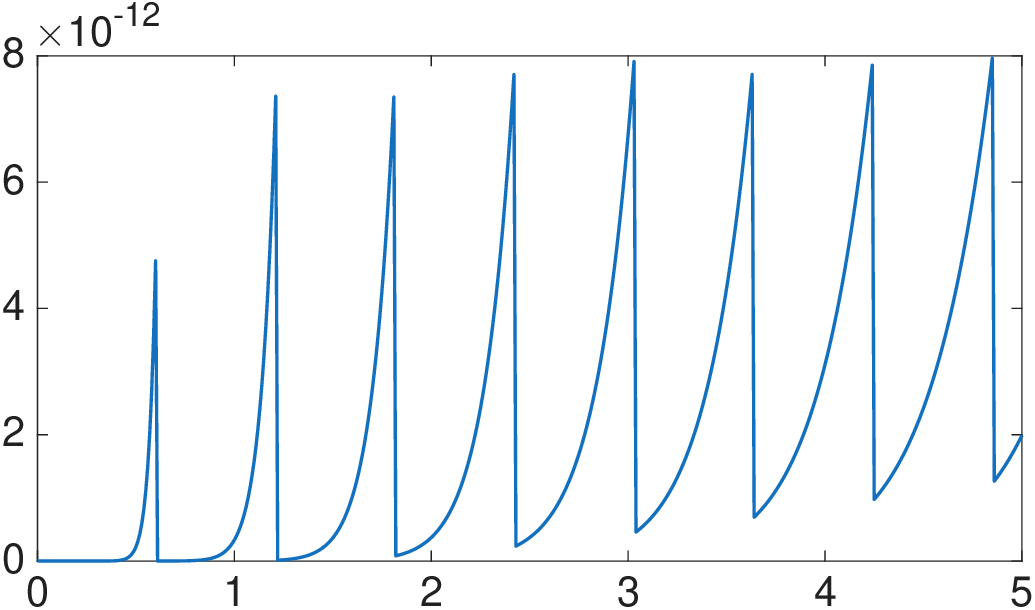} \hspace{5mm}
	\includegraphics[scale=0.35]{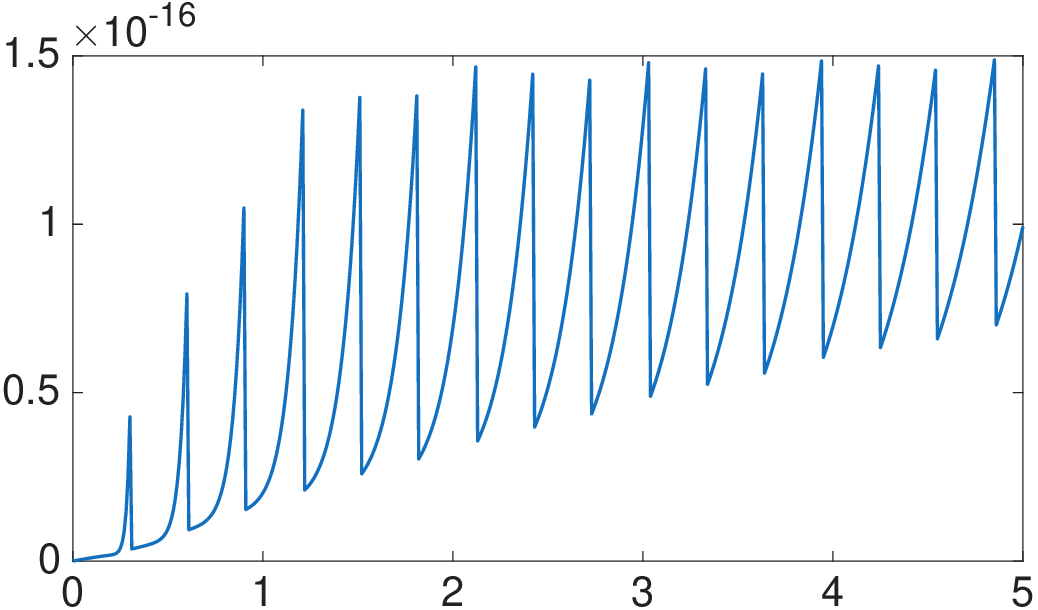} 
	\end{center}
\captionsetup{aboveskip=0pt, justification=justified, labelfont=small, labelsep=space, labelfont=bf}
\caption{\small
Top: Illustration of the computational error $E(x)$ in (\ref{erf}) to approximate $ \mathrm {erf}(x)$ by an exponential sum
of length  $N=8$  for  $x \in [0,5]$  with $\rho=4$ (left) and $\rho=8$  (right). 
}
\label{figerf}
\end{figure}

\begin{remark}
Instead of introducing a new interval length $b$ depending on $x$ to compute the approximation of
 $\mathrm{erf}(x)$, for $x  \in [b_j, \, b_{j+1}]= [j b_1,\, (j+1)b_1]$ one can also use the approximation 
\begin{align*}
\mathrm{erf}(x)  
&\approx 
\textstyle \frac{4}{{\pi}} \sum\limits_{\ell=0}^{j-1} \int\limits_{\ell b_1}^{(\ell+1)b_1}  
{\mathrm e}^{(\ell b_1)^2- 2\ell b_1 t}
\sum\limits_{k=1}^{\frac{N}{2}} \!\! w_{N,k}^{(H)} \!\cos\big( 
2 t_{N,k}^{(H)}(t- \ell b_1) \big)  {\mathrm d}t \\
& \quad + \textstyle \frac{4}{{\pi}} \int\limits_{j b_1}^{x} 
{\mathrm e}^{(jb_1)^2- 2j b_1 t}
\sum\limits_{k=1}^{\frac{N}{2}} \!\! w_{N,k}^{(H)} \!\cos\big( 
2 t_{N,k}^{(H)}(t- j b_1) \big)  {\mathrm d}t,
\end{align*}
which leads to a similar result.
\end{remark}

\section*{Acknowledgement}
The authors acknowledge funding from the European Union’s Horizon 2020 research and innovation programme under the Marie Sk\l{}odowska-Curie grant agreement No 101008231 (EXPOWER).

{\small
\bibliographystyle{abbrv}
\bibliography{gauss_bib.bib}
}
\end{document}